\newtheorem{theorem}{Theorem}[section]
\newtheorem{proposition}[theorem]{Proposition}
\newtheorem{corollary}[theorem]{Corollary}
\newtheorem{lemma}[theorem]{Lemma}
\theoremstyle{definition}
\newtheorem{definition}[theorem]{Definition}
\newtheorem{example}[theorem]{Example}
\newtheorem{question}[theorem]{Question}
\numberwithin{equation}{section}
\theoremstyle{definition}
\newtheorem{remark}[theorem]{Remark}
\newtheorem{remarks}[theorem]{Remarks}
\newcommand{\Ac}{\mathcal{A}}
\newcommand{\Cc}{\mathcal{C}}
\newcommand{\Ic}{\mathcal{I}}
\newcommand{\Rc}{\mathcal{R}}
\newcommand{\Abf}{\mathbf{A}}
\newcommand{\Ebf}{\mathbf{E}}
\newcommand{\Ibf}{\mathbf{I}}
\newcommand{\Kbf}{\mathbf{K}}
\newcommand{\Lbf}{\mathbf{L}}
\newcommand{\LL}{\mathbf{\Lambda}}
\newcommand{\Nbf}{\mathbf{N}}
\newcommand{\Ubf}{\mathbf{U}}
\newcommand{\Zbf}{\mathbf{Z}}
\newcommand{\Ps}{\mathscr{P}}
\newcommand{\Sc}{\mathcal{S}}
\newcommand{\sing}{\mathit{sg}}
\newcommand{\Cb}{\mathbb{C}}
\newcommand{\Fb}{\mathbb{F}}
\newcommand{\Pb}{\mathbb{P}}
\newcommand{\PP}{\mathbb{P}}
\newcommand{\Qb}{\mathbb{Q}}
\newcommand{\Rb}{\mathbb{R}}
\newcommand{\Zb}{\mathbb{Z}}
\newcommand{\Z}{\mathbb{Z}}
\newcommand{\Gr}{Gr}
\newcommand{\beq}{\begin{eqnarray}}
\newcommand{\eeq}{\end{eqnarray}}
\newcommand{\SL}{\textrm{SL}}
\newcommand{\tf}{\textrm{tf}}
\newcommand\ssm{\smallsetminus}
\newcommand{\para}[1]{\medskip\noindent\textbf{#1.}}
\DeclareMathOperator{\Dehn}{Dehn}
\DeclareMathOperator{\Aut}{Aut}
\DeclareMathOperator{\Def}{def}
\DeclareMathOperator{\ext}{Ext}
\DeclareMathOperator{\GL}{GL}
\DeclareMathOperator{\Hom}{Hom}
\DeclareMathOperator{\Orth}{O}
\DeclareMathOperator{\SO}{SO}
\DeclareMathOperator{\Teich}{Teich}
\DeclareMathOperator{\la}{\langle}
\DeclareMathOperator{\ra}{\rangle}
\DeclareMathOperator{\Diff}{Diff}
\DeclareMathOperator{\Mod}{Mod}
\DeclareMathOperator{\Tor}{{\mathcal I}}
\title{The Nielsen realization problem for K3 surfaces}
\author{Benson Farb and Eduard Looijenga \thanks{The first author was supported in part by National Science Foundation Grant No. DMS-181772 and the Eckhardt Faculty Fund. The second author is supported by the Chinese National Science Foundation. Both authors are supported by the Jump Trading Mathlab Research Fund. }}
\begin{document}
\maketitle

\begin{abstract}

The smooth (resp.\ metric and complex) Nielsen Realization Problem for K3 surfaces $M$ asks: when can a finite group $G$ of mapping classes of $M$ be realized by a finite group of diffeomorphisms (resp.\ isometries of a Ricci-flat metric, or automorphisms of a complex structure)?  We solve the metric and complex versions of Nielsen Realization,  and we solve the smooth version for involutions.
Unlike the case of $2$-manifolds, some $G$ are realizable and some are not, and the answer depends on the category of structure preserved. In particular, Dehn twists are not realizable by finite order diffeomorphisms.  We introduce a computable invariant $\Lbf_G$ that determines in many cases whether $G$ is realizable or not, and apply this invariant to construct an $S_4$ action by isometries of some Ricci-flat metric on $M$ that preserves no complex structure.

We also show that the subgroups of  $\Diff(M)$ of a given prime order $p$ which fix pointwise  some positive-definite $3$-plane in $H_2(M; \Rb)$ and preserve some complex structure on $M$ form a single conjugacy class in $\Diff(M)$ (it is known that then $p\in \{2,3,5,7\}$).
\end{abstract}

\tableofcontents

\section{Introduction}
Let $M$ be a closed, oriented, smooth manifold.  The (smooth) {\em mapping class group} of $M$ is the group $\Mod(M):=\pi_0(\Diff^+(M))$ of smooth isotopy classes of orientation-preserving diffeomorphisms of $M$.  The {\em Nielsen Realization Problem} asks if the natural map $\Diff^+(M)\to\Mod(M)$ has a section over any finite subgroup $G\subset \Mod(M)$; that is, does any finite group of mapping classes arise as a finite group of diffeomorphisms?  When there is such a section 
we say that $G$ is {\em realized} as a group of diffeomorphisms.   Whenever $M$ admits a particular kind of structure, there is a corresponding Nielsen realization problem. For example, one can also ask:

\bigskip
\noindent
{\bf Metric Nielsen Realization: } Suppose that $M$ admits a distinguished class $\Cc$ of metrics.  Given a 
finite subgroup $G\subset\Mod(M)$, does there exist a lift of $G$ to $\Diff(M)$, and a metric $g\in \Cc$, such that $G\subseteq {\rm Isom}(M,g)$?

\medskip
\noindent
{\bf Complex Nielsen Realization: }  Given a finite subgroup $G\subset\Mod(M)$, does there exist a lift of $G$ to $\Diff(M)$, and a complex structure $J$ on $M$, such that $G\subseteq\Aut(M,J)$?

\bigskip
For $M$ a closed, oriented surface $\Sigma_g$,  a positive answer in all categories was given by Fenchel for cyclic $G$ and by Kerckhoff in general: any finite subgroup 
$G\subset\Mod(\Sigma_g)$ is realized by a group of automorphisms of both a complex structure on $\Sigma_g$ and its associated constant curvature metric.  

In this paper we consider the Nielsen Realization Problem for K3 surfaces $M$.  These admit both complex structures and Ricci-flat metrics.  We solve the metric and complex 
Nielsen Realization (Theorem~\ref{thm:K3examples}) problems; we solve the smooth version for involutions (Theorem~\ref{theorem:main2}); and we give partial progress in general.    Unlike the case of $2$-manifolds, some $G$ are realizable and some are not, and the answer depends on the category of structure preserved. We introduce a computable invariant $\Lbf_G$ that determines in many cases if $G$ is realizable or not.  Two other results are: the classification of prime order subgroups of $\Diff(M)$ that preserve some complex structure and pointwise fix a positive definite $3$-plane in $H_2(M; \Rb)$; and a characterization of ``Nikulin type'' involutions in $\Diff(M)$.

\subsection{A computable invariant}\label{subsect:computable}

Recall that a {\em K3 surface} is a closed, simply-connected complex surface that admits a nowhere vanishing holomorphic $2$-form.  It is known that any two K3 surfaces are  diffeomorphic (see for example \cite{BPHV}, Cor.\ 8.6); we call this diffeomorphism type the {\em K3 manifold}.  We fix one such $M$ throughout this paper. It is also true that, as  Friedman-Morgan have shown (\cite{friedman-morgan}, Ch.\ VII Cor.\ 3.5), any complex structure on $M$ turns it into a K3 surface. The manifold $M$ has a natural fundamental class $[M]\in H_4(M;\Zb)$: it is the one for which the signature of $M$ is $-16$.  So every diffeomorphism of $M$ is necessarily orientation-preserving.  

It is known that $H_2(M):=H_2(M;\Zb)\cong\Zb^{22}$, and that $H_2(M)$ as a lattice equipped with the symmetric, bilinear intersection pairing (with respect to the above-mentioned orientation) 

\[
H_2(M)\times H_2(M)\to \Zb,  \quad  (a,b)\mapsto a\cdot b 
\]
is isomorphic to the {\em K3 lattice} $\Ubf^{\oplus 3}\oplus \Ebf_8(-1)^{\oplus 2}$, where $\Ubf$ denotes
the hyperbolic lattice (which has two isotropic generators $e,f$ with $e\cdot f=1$), the term $\Ebf_8(-1)$ denotes  the $\Ebf_8$-lattice whose pairing has been multiplied with $-1$, and $\oplus$ stands for the orthogonal direct sum. In particular $H_2(M)$ has signature type $(3,19)$.  We will often write $\LL$ for the lattice $H_2(M)$  and $\LL_{R}$ for $\LL\otimes R$ when $R$ is an abelian group. 
Note that the group $\Orth(\LL)$ of automorphisms of $\LL$ is an arithmetic lattice in the real semisimple Lie group $\Orth(\LL_{\Rb})$. 

The natural action of $\Diff(M)$ on $\LL$ factors through an orthogonal  representation 
$\rho:\Mod(M)\to\Orth(\LL)$. According to Donaldson (\cite{D-or} (3.20) and Definition (3.24)), the tautological $3$-plane bundle over the  space $\Gr^+_3(\LL_\Rb)$ of positive-definite $3$-planes in 
$\LL_{\Rb}$ comes with a natural orientation  (which we shall call a \emph{spinor orientation}), so that $\rho$ will take its values in the 
subgroup of $g\in  \Orth(\LL)$ that preserve this orientation. This subgroup is of index two in $\Orth(\LL)$ and will be denoted by $\Orth^+(\LL)$.   For example, if  $v\in \LL$ is such that  $v\cdot v=-2$ (we will refer to such a $v$ as a \emph{$(-2)$-vector} in $\LL$), then the orthogonal reflection with respect to $v$ is given by $x\in\LL\mapsto x+(v\cdot x)v$ and lies in $\Orth^+(\LL)$.  Such a reflection is the image under $\rho$  of a ``$2$-dimensional Dehn twist'' about a $2$-sphere in $M$ (see \cite{Se} for a precise definition of these).  Since these reflections  generate  all of $\Orth^+(\LL)$, the representation $\rho$ has image $\Orth^+(\LL)$  (see \cite{Bo} and \cite{Ma1}).

The theory of complex K3 surfaces produces many finite subgroups $G\subset\Mod(G)$ that are realizable by complex (even K\"ahler) automorphisms.  The question is whether or not these are the only realizable finite subgroups $G\subset\Mod(M)$.  We now introduce an invariant to help answer this question.

Let $G\subset \Orth^+(\LL)$ be a finite subgroup. Since $\Gr_3^+(\LL_{\Rb})$ is the symmetric space of $\Orth^+(\LL_{\Rb})$ it is nonpositively curved, and so $G$ will have a fixed point in this space. In other words, $G$ will leave invariant a positive definite $3$-plane $P\subset \LL_{\Rb}$. Since $G$ also preserves the orientation of $P$, the representation $\rho$ restricts to 
a representation  $\rho_{P}: G\to \SO(P)$. The image $\rho_P(G)$ will be conjugate to one of the familiar groups: cyclic (this includes the trivial group), dihedral, tetrahedral, octahedral or icosahedral. Note that only the last three cases yield
an irreducible representation.

 Let $\Ibf_G\subset\LL_{\Rb}$
be the sum of the  irreducible  $\Rb [G]$-submodules  of the type that appear in $P$. As our notation intends to suggest, this subspace (in contrast to $P$) is 
canonically associated with $G$.    

\begin{definition}[{\bf The invariant \boldmath$\Lbf_G$}]
 Define $\Lbf_G$ to be the following sublattice of $\LL$: 
\[\Lbf_G:=\Ibf_G^\perp\cap \LL.\] 
 
\end{definition}

The sublattice $\Lbf_G$ is clearly negative definite and $G$-invariant.  It is easily computable.  

\subsection{Metric and complex Nielsen Realization}
Siu \cite{Siu} proved that every complex K3 surface $M$ admits a K\"ahler metric  and  Yau \cite{Yau} proved that for every K\"ahler metric on $M$ there is a unique Ricci-flat K\"ahler metric in its cohomology class.   These  Ricci-flat metrics play the same role for K3 surfaces that constant curvature metrics play for Riemann surfaces.  
If a finite group $G\subset\Diff(M)$ preserves some complex structure on $M$ then one can take a $G$-average of a  K\"ahler metric  and then apply Yau's theorem to find a $G$-invariant, Ricci-flat K\"ahler metric.  Thus there are basically three Nielsen Realization problems for K3 surfaces, each stronger than the previous:  realization by diffeomorphisms of $M$; realization by isometries of some Ricci-flat metric on $M$; and realization by complex automorphism of $M$.

For the complex and Ricci-flat categories there is an associated  Teichm\"{u}ller space, which is particularly nice in the Ricci-flat case. For our purposes the latter case is the most relevant.  Let $\Teich(M)$ denote the 
moduli space of smooth isotopy classes of unit volume, Ricci-flat metrics on $M$; thus two such metrics define the same point of $\Teich(M)$ if one is the pullback of the other via a diffeomorphism of $M$ isotopic to the identity.  The space $\Teich(M)$ comes equipped with a natural $\Mod(M)$-action.  It also comes with a {\em period map}:  for a (Ricci-flat) Riemann
metric on $M$,  its space of self-dual harmonic  $2$-forms defines a positive definite $3$-plane in $H^2(M; \Rb)$; 
its orthogonal complement is negative definite and is defined by the space anti-self-dual harmonic $2$-forms.  Identifying $H^2(M; \Rb)$ with $\LL_\Rb$ via Poincar\'e duality, this gives a map 
\[
\Ps:  \Teich(M)\to \Gr^+_3(\LL_\Rb).
\]
Since the codomain of $\Ps$ is the symmetric space for $\Orth(\LL_\Rb)$, the group $\Orth^+(\LL)$ acts properly discontinuously on it.

We will need some known properties  of $\Ps$ which we recall in more detail in Proposition \ref{prop:Lo1}.  The image of $\Ps$ is the subset $U\subset \Gr^+_3(\LL_\Rb)$ given by the set of all positive-definite $3$-planes not perpendicular to a $(-2)$-vector. This $U$ is a simply-connected open subset of $\Gr^+_3(\LL_\Rb)$. The natural action of $\Diff(M)$ on $\LL$ factors through a representation  $\rho:\Mod(M)\to\Orth^+(\LL)$  whose kernel  $\Tor(M):=\ker(\rho)$ is called the {\em Torelli subgroup} of $\Mod(M)$. The map $\Ps$ is obviously $\Mod(M)$-equivariant, with $\Mod(M)$ acting on $ \Gr^+_3(\LL_\Rb)$ via $\rho$.  It maps every connected component of $\Teich(M)$ homeomorphically onto $U$,
thus making $\Teich(M)$ a smooth manifold of dimension $3\cdot 19=57$. If $\Teich(M)_o$  is a connected component of $\Teich(M)$, then 
the $\Mod(M)$-stabilizer $\Mod(M)_o$ of  $\Teich(M)_o$ maps isomorphically onto $\Orth^+(\LL)$. Thus $\Mod(M)$ acquires the structure of a semi-direct product 
\[
\Mod(M)=\Tor(M)\rtimes \Mod(M)_o\cong \Tor(M)\rtimes \Orth^+(\LL)
\]
and gives a corresponding description of $\Teich(M)$ as a $\Mod(M)$-manifold:
\[
\Teich(M)\cong \Mod(M)\times_{\Mod(M)_o} U.
\]
In particular, $\Tor(M)$ permutes simply-transitively the connected components of $\Teich(M)$.

A finite subgroup of $\Diff(M)$ that fixes some Ricci-flat metric on $M$ always fixes a point of $\Teich(M)$ and hence a connected component of 
$\Teich(M)$. In view of the preceding it is then natural to consider the metric Nielsen Realization problem for finite subgroups of  $\Orth^+(\LL)$,  although  this is clearly \emph{a priori} not the same as the corresponding problem for finite subgroups of $\Mod(M)$.   The following theorem solves this version of the Ricci-flat and complex versions of the Nielsen Realization problem for K3 surfaces.  The answer to the homological version is in terms of the invariant $\Lbf_G$ only.

\begin{theorem}[\textbf{Metric and complex Nielsen Realization}] 
\label{thm:K3examples}
Let $M$ be a K3 manifold and let $G$ be a finite subgroup  of   $\Orth^+(\LL)$.
\begin{enumerate}
\item $G$ lifts to the group of isometries of a Ricci-flat metric on $M$ if and only if  $\Lbf_G$ contains no
$(-2)$-vectors. 
\item $G$ lifts to the group of automorphisms of some complex structure on $M$ endowed with a Ricci-flat K\"ahler metric (making it a 
\emph{K\"ahler-Einstein} K3 surface) if and only if $\Lbf_G$ contains no
$(-2)$-vectors and in addition the trivial representation appears in $\Lbf_G^\perp$.  In this case the complex structure can be chosen so that $M$ is projective and $G$ acts by algebraic automorphisms.
\end{enumerate}
If instead $G$ is given rather as a finite subgroup of $\Mod(M)$, then a necessary and sufficient condition for it to lift as in Case 1 (resp.\  Case 2) is that  $G$ satisfies the homological conditions stated in that case and  also preserves a connected component of the Teichm\"uller space $\Teich(M)$. 
\end{theorem}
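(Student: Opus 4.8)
The plan is to reduce the statement for a finite $G\subset\Mod(M)$ to the already-established Cases (1) and (2) for finite subgroups of $\Orth^+(\LL)$, using the decomposition $\Mod(M)=\Tor(M)\rtimes\Mod(M)_o$ together with the fact that $\Tor(M)=\ker\rho$ permutes the components of $\Teich(M)$ simply transitively. The organizing observation is that $G$ lifts to isometries of a Ricci-flat metric if and only if $G$ fixes a point of $\Teich(M)$: one implication is the already-noted fact that a $G$-invariant Ricci-flat metric $g$ determines a $G$-fixed class $[g]\in\Teich(M)$, while the converse is the realization principle I establish in the sufficiency step below. Throughout I write $\bar G:=\rho(G)$ and understand $\Ibf_G=\Ibf_{\bar G}$, $\Lbf_G=\Lbf_{\bar G}$.

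First I would treat necessity. Suppose $G$ is realized by a finite group $\tilde G\subset\Isom(M,g)$ of isometries of a Ricci-flat metric $g$. Then $[g]$ is a $G$-fixed point of $\Teich(M)$, so $G$ preserves the component $\Teich(M)_{o''}$ containing it; this is the stated topological condition. For the homological condition, the period $P:=\Ps([g])$ lies in $U$ and is a $\bar G$-invariant positive-definite $3$-plane, so $P\subseteq\Ibf_G$ by the (canonical) definition of $\Ibf_G$. Since $\Lbf_G=\Ibf_G^\perp\cap\LL$ is then orthogonal to $P$, any $(-2)$-vector in $\Lbf_G$ would be perpendicular to $P$, contradicting $P\in U$; hence $\Lbf_G$ contains no $(-2)$-vector. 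In Case (2) the $G$-invariant Kähler–Einstein metric has a nonzero $G$-fixed Kähler class $\omega\in P\subseteq\Ibf_G\subseteq\Lbf_G^\perp\otimes\Rb$, which exhibits the trivial representation inside $\Lbf_G^\perp$.

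For sufficiency I would proceed as follows. Because $G$ preserves a component, after conjugating by a suitable $\gamma_0\in\Tor(M)$ we may assume $\Gamma':=\gamma_0^{-1}G\gamma_0\subset\Mod(M)_o$, and realizability by isometries (resp.\ by complex automorphisms) is unchanged by such conjugation, since one pulls back the metric (resp.\ the complex structure) by a representative diffeomorphism. The isomorphism $\rho\colon\Mod(M)_o\xrightarrow{\sim}\Orth^+(\LL)$ identifies $\Gamma'$ with a finite subgroup $\bar G'=\rho(\Gamma')$ conjugate to $\bar G$ in $\Orth^+(\LL)$. As the conditions ``$\Lbf$ contains no $(-2)$-vector'' and ``the trivial representation appears in $\Lbf^\perp$'' are invariant under conjugation in $\Orth^+(\LL)$, the group $\bar G'$ satisfies the hypotheses of Case (1) (resp.\ Case (2)), so the already-proved statement for subgroups of $\Orth^+(\LL)$ produces a Ricci-flat metric $g'$ (resp.\ a Kähler–Einstein structure) whose isometry (resp.\ automorphism) group contains a copy $\tilde H$ inducing the inclusion $\bar G'\hookrightarrow\Orth^+(\LL)$ on $\LL$.

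The main obstacle is the final matching step, since Cases (1)/(2) realize $\bar G'$ in an a priori uncontrolled component $\Teich(M)_{o'''}$, whereas I need a realization of the specific marked subgroup $\Gamma'\subset\Mod(M)_o$. I would resolve this with the simply transitive action of $\Tor(M)$ on $\pi_0(\Teich(M))$: the mapping classes of $\tilde H$ lie in $\Mod(M)_{o'''}$ and form the unique subgroup there with homological image $\bar G'$, because $\rho$ restricts to an isomorphism on every component-stabilizer. If $\sigma\in\Tor(M)$ is the unique element with $\sigma\cdot o=o'''$, then conjugation by $\sigma$ carries $\Gamma'$ onto $[\tilde H]$ (preserving homological images, as $\rho(\sigma)=1$); invariance of realizability under conjugation in $\Mod(M)$ then transports the isometric action onto $\Gamma'$ and finally onto $G=\gamma_0\Gamma'\gamma_0^{-1}$. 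Case (2) is identical with Case (2) in place of Case (1) and complex structures pulled back in place of metrics. I expect this bookkeeping—tracking which component each realization occupies and checking that Torelli conjugation matches the marked subgroups exactly—to be the only delicate point; everything else follows formally from the structural description of $\Teich(M)$ and the two cases already in hand.
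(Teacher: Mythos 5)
Your necessity argument and your reduction of the mapping-class version to the $\Orth^+(\LL)$ version are both correct: the observation that any $G$-invariant positive-definite $3$-plane lies in $\Ibf_G$, so that a $(-2)$-vector in $\Lbf_G$ would be perpendicular to the period $P_s$, is exactly right, and the bookkeeping with $\Tor(M)$ acting simply transitively on $\pi_0(\Teich(M))$ is sound (the paper shortcuts this by producing the metric directly in the $G$-invariant component). But there is a genuine gap at the center: you treat ``Cases (1) and (2) for finite subgroups of $\Orth^+(\LL)$'' as already established and merely invoke them, whereas those two cases are the main content of the theorem and are precisely what the paper's proof is devoted to. Concretely, what is missing is the implication: if $\Lbf_G$ contains no $(-2)$-vectors, then there exists a $G$-invariant positive-definite $3$-plane $P$ perpendicular to \emph{no} $(-2)$-vector of $\LL$ (i.e.\ $P$ lies in the image $U$ of the period map). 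A priori every $G$-invariant $P$ could be forced onto one of the countably many walls $v^\perp$ with $v\cdot v=-2$, and ruling this out is the whole point of the invariant $\Lbf_G$.

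The paper closes exactly this gap with a density argument: inside the Grassmannian of $G$-invariant positive-definite $3$-planes $P\subset\Lbf_G^\perp\otimes\Rb$ (nonempty, since any invariant $P$ lies in $\Ibf_G\subseteq\Lbf_G^\perp\otimes\Rb$), those whose rational hull is all of $\Lbf_G^\perp\otimes\Rb$ --- equivalently, with $P^\perp\cap\LL=\Lbf_G$ --- are dense. For such a $P$, any $(-2)$-vector perpendicular to $P$ would lie in $\Lbf_G$, contrary to hypothesis, so $P\in U$; the differential-geometric Torelli theorem (Proposition \ref{prop:Lo1}) then yields a Ricci-flat metric with period $P$, which in the mapping-class version can be taken in the $G$-invariant component, and Corollary \ref{cor:isometrylift} lifts $G$ to its isometry group. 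Case (2) requires the analogous refinement: one must in addition choose such a $P$ containing a nonzero $G$-fixed vector $\kappa$ (possible exactly when the trivial representation occurs in $\Lbf_G^\perp$), which via the twistor description produces an invariant complex structure with K\"ahler class $\kappa$, taken rational to get projectivity. Without some version of this perturbation step, your proposal establishes only the ``only if'' directions together with the final paragraph of the statement.
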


\begin{remark}\label{rem:}
We do not know whether conversely, a finite subgroup $G\subset \Diff(M)$ necessarily preserves a connected component of $\Teich(M)$, or equivalently (by Theorem \ref{thm:K3examples} above) a Ricci-flat metric.  Perhaps this might be proved by choosing a $G$-invariant metric and then use the Ricci flow to deform it equivariantly into a $G$-invariant 
Ricci-flat metric. (As Shing-Tung Yau pointed out to us, it suffices that one ends up with a metric of nonnegative scalar curvature, since for a K3 manifold this implies Ricci-flatness.)
\end{remark}

The following result shows that the metric and complex versions of Nielsen Realization are different. 

\begin{theorem}[{\bf Ricci-flat but not complex}]
\label{theorem:example1}
There exists a subgroup $G\subset\Orth^+(\LL)$ isomorphic to the alternating group $\Ac_4$ that can be realized as a group of isometries of some Ricci-flat metric on $M$, but that preserves no complex structure on $M$.
\end{theorem}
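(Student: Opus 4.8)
The plan is to translate the problem, via Theorem~\ref{thm:K3examples}, into the construction of a single explicit copy of $\Ac_4$ inside $\Orth^+(\LL)$. Concretely, I want a faithful representation $G\cong\Ac_4\hookrightarrow\Orth^+(\LL)$ for which (i) the $G$-invariant positive-definite $3$-plane $P$ carries the $3$-dimensional irreducible (``tetrahedral'') representation $V_{\st}$ of $\Ac_4$, and (ii) the negative-definite lattice $\Lbf_G$ contains no $(-2)$-vector. Property (ii) is exactly the hypothesis of Theorem~\ref{thm:K3examples}(1), so it yields a Ricci-flat isometric realization. For the complex obstruction, observe that $\Lbf_G^\perp$ equals, after tensoring with $\Rb$, the isotypic subspace $\Ibf_G$, which by (i) is a sum of copies of $V_{\st}$; since $V_{\st}$ is nontrivial and irreducible, the trivial representation does not occur in $\Lbf_G^\perp$. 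Thus the extra condition of Theorem~\ref{thm:K3examples}(2) fails and $G$ preserves no complex structure. Finally $G\subset\Orth^+(\LL)$ comes for free: $G$ fixes $P$ and acts on it through $\SO(P)$, hence preserves the spinor orientation.

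For the representation-theoretic template I use the standard action of $\Ac_4\subset\Sf_4$ on the root lattice $A_3\cong D_3$, realized as the sum-zero vectors in $\Zb^4$ with the permutation action; on $A_3\otimes\Rb$ this is precisely $V_{\st}$, on which $\Ac_4$ acts as the rotation group of the tetrahedron and so irreducibly. Over $\Qb$ the irreducible $\Ac_4$-modules are only the trivial module $V_0$, the $2$-dimensional Eisenstein module $V_1$ (on which $\Ac_4$ acts through its quotient $\Zb/3$), and $V_{\st}$, so any $\Ac_4$-stable even form of signature $(3,19)$ has $\LL_\Qb\cong c\,V_{\st}\oplus b\,V_1\oplus a\,V_0$ with $3c+2b+a=22$. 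I place one positive copy of $V_{\st}$ at $P$; then $\Ibf_G$ is the full $V_{\st}$-isotypic summand and $\Lbf_G=\Ibf_G^\perp\cap\LL$ carries only $V_0$ and $V_1$. Since $22\equiv1\pmod3$ the rank of $\Lbf_G$ is congruent to $1\bmod3$; the most economical choice is $c=7$, $b=0$, $a=1$, so that $\Lbf_G$ has rank $1$ and can be arranged to be $\langle-2d\rangle$ with $d\ge2$, which visibly contains no $(-2)$-vector.

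To assemble the global lattice I will exploit the uniqueness of even unimodular lattices of signature $(3,19)$: it suffices to produce \emph{any} even unimodular lattice $L$ of that signature carrying an $\Ac_4$-action of the prescribed isotypic shape, for then $L\cong\LL$ and the action transports into $\Orth^+(\LL)$. I build $L$ as a Nikulin gluing of the orthogonal sum $A_3\oplus A_3(-1)^{\oplus6}\oplus\langle-2d\rangle$ (which has signature $(3,19)$, with $\Ac_4$ acting as $V_{\st}$ on each $A_3$-factor and trivially on the last factor) by a suitable isotropic glue subgroup $H$ of its discriminant group. The decisive simplification is that the Weyl group $\Sf_4=W(A_3)$ acts trivially on $A_3^\ast/A_3\cong\Zb/4$, and $\Ac_4$ acts trivially on the discriminant of $\langle-2d\rangle$; hence $\Ac_4$ acts trivially on the whole discriminant group, every glue subgroup $H$ is automatically $\Ac_4$-invariant, and the resulting overlattice inherits the $\Ac_4$-action with no further equivariance check.

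The main obstacle is the discriminant-form bookkeeping behind property (ii): one must choose $d$ and a maximal isotropic $H$ so that the glued lattice is simultaneously even and unimodular (forcing $H$ to be Lagrangian for the total discriminant form $q_{A_3}\perp(-q_{A_3})^{\perp6}\perp q_{\langle-2d\rangle}$) while keeping $\Lbf_G=\Ibf_G^\perp\cap L$ root-free. The many $(-2)$-vectors carried by the $A_3$-factors are irrelevant, since they all lie inside $\Ibf_G$; what must be verified is only that the rank-$1$ complement stays root-free after passing to the overlattice, i.e.\ that its generator still has norm $\le-4$. Should the rank-$1$ configuration resist the Lagrangian condition, one has room to retreat to $c=6$, where $\Lbf_G$ has rank $4$ and can be taken to be an explicit even negative-definite lattice of minimum $\ge4$ (e.g.\ a rescaled Eisenstein lattice realizing $V_1^{\oplus2}$). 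I expect this finite computation in the sense of Nikulin to be the only delicate point; once it is carried out, Theorem~\ref{thm:K3examples} delivers both halves of the statement at once.
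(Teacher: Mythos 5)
Your reduction of the statement to Theorem~\ref{thm:K3examples} is exactly right and matches the paper: put the irreducible $3$-dimensional representation $V_{\st}$ of $\Ac_4$ on the invariant positive $3$-plane, so that $\Ibf_G$ is the $V_{\st}$-isotypic component, the trivial representation cannot occur in $\Lbf_G^\perp$, and part (2) of Theorem~\ref{thm:K3examples} is obstructed; then all that remains is to exhibit a copy of $\Ac_4$ in $\Orth^+(\LL)$ of this isotypic shape with $\Lbf_G$ root-free. The rational isotypic bookkeeping ($3c+2b+a=22$) and the observation that $W(A_3)$ acts trivially on $\Abf_3^\vee/\Abf_3$ are also correct.

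The gap is that the one step carrying all the content --- actually producing the lattice --- is not done. You defer the existence of a Lagrangian glue group for $q_{\Abf_3}\oplus(-q_{\Abf_3})^{\oplus 6}\oplus q_{\langle -2d\rangle}$ to ``a finite computation in the sense of Nikulin,'' do not verify that it succeeds for your preferred numerology $c=7$, $b=0$, $a=1$, and explicitly hedge that you may have to retreat to $c=6$. As written this is a plan for a proof, not a proof: the Witt-triviality of that $2$-adic discriminant form is genuinely delicate (for instance $(-q_{\Abf_3})^{\oplus 2}$ on $(\Zb/4)^2$ has no Lagrangian, so the pairing-off is not free), and until a specific $d$ and $H$ are produced and the primitive closure of $\langle -2d\rangle$ is checked, the existence claim is unsupported. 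The paper sidesteps all of this with an explicit construction in the $c=6$, $a=4$ configuration: it identifies $\Abf_3\oplus\Abf_3^\vee$ with its natural hyperbolic pairing as $\Ubf^{\oplus 3}$ (disposing of the signature-$(3,3)$ part with no gluing at all), and uses two perpendicular copies of $A_3$ inside $E_8$ whose orthogonal complement in $\Ebf_8$ is spanned by two perpendicular $(+4)$-vectors, acting this way on each $\Ebf_8(-1)$ summand; the resulting $\Lbf_G$ is visibly $\langle -4\rangle^{\oplus 4}$, which contains no $(-2)$-vector. Replacing your unperformed gluing by this (or by any other completed construction) is what is needed to close the argument.
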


The proof of Theorem~\ref{theorem:example1} is an example of both a constructive and an obstructive application of Theorem~\ref{thm:K3examples}: the latter result implies that, to find the required example, it suffices to find a subgroup 
$G\subset\Orth^+(\LL)$ such that $\Lbf_G$ contains no $(-2)$-vector and $\Lbf_G^\perp$ contains no $G$-fixed vector.  This is a linear algebra problem.  

\para{Prime order subgroups} When $G$ pointwise fixes a positive-definite $3$-plane, it
will also fix a vector in $\LL$ with positive self-intersection, and then both parts of 
Theorem \ref{thm:K3examples} can be refined: an invariant Ricci-flat metric is always accompanied by an invariant complex structure for which this is a K\"ahler metric.  We also note that Theorem~\ref{thm:K3examples} implies 
the following.

\begin{corollary}\label{cor:classification}
A cyclic subgroup $G\subset\Diff(M)$ of prime order that leaves invariant a connected component of $\Teich(M)$  and  a 
positive-definite $3$-plane $P\subset \LL_\Rb$ but acts \emph{nontrivially} on $P$,  preserves a Ricci-flat metric. 
\end{corollary}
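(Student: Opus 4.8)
The plan is to deduce the corollary from the $\Mod(M)$-version of Theorem~\ref{thm:K3examples}(1). Since $G\subset\Diff(M)$ is cyclic of prime order $p$ and acts nontrivially on the positive-definite $3$-plane $P$, the composite $G\to\Mod(M)\xrightarrow{\rho}\Orth^+(\LL)$ is injective, because a cyclic group of prime order has no proper nontrivial quotient and the action on $P\subset\LL_\Rb$ is nontrivial. We may therefore identify $G$ with its image in $\Orth^+(\LL)$. By hypothesis $G$ preserves a connected component of $\Teich(M)$, so by the cited theorem it remains only to verify the homological condition of Case~1, namely that $\Lbf_G$ contains no $(-2)$-vector. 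I claim in fact that $\Lbf_G=0$, which makes this automatic.

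First I would pin down $\Ibf_G$ by analysing the $G$-action on $P$. The image $\rho_P(G)\subset\SO(P)\cong\SO(3)$ is cyclic of order $p$. If $p=2$, the nontrivial element is a rotation by $\pi$, with eigenvalues $+1,-1,-1$ on $P$; thus both real irreducible $\Rb[G]$-modules (trivial and sign) occur in $P$, so $\Ibf_G=\LL_\Rb$ and $\Lbf_G=\Ibf_G^\perp\cap\LL=0$. If $p$ is odd, a generator acts as a nontrivial rotation, so $P$ splits as the rotation axis (a trivial summand) together with a single two-dimensional irreducible $V_\theta$ on which the generator rotates by $2\pi/p$. Hence $\Ibf_G=\LL_\Rb^{G}\oplus W_\theta$, where $W_\theta\subset\LL_\Rb$ is the $V_\theta$-isotypic component, and correspondingly $\Ibf_G^\perp$ is the sum of the remaining nontrivial isotypic components.

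It remains to treat $p$ odd, where the crux is a Galois-theoretic vanishing statement. Let $N:=(\LL^{G})^{\perp}\cap\LL$ be the coinvariant lattice. Because $p$ is prime, $\Qb[\Zb/p]\cong\Qb\times\Qb(\zeta_p)$ with $\Qb(\zeta_p)$ a field, so $N_\Qb$ carries the structure of a $\Qb(\zeta_p)$-vector space. Over $\Rb$ it decomposes as $N_\Rb=\bigoplus_{j} W_j$, one summand for each of the $(p-1)/2$ complex places of $\Qb(\zeta_p)$, and the projection $N_\Qb\to W_j$ is induced by the corresponding complex embedding of $\Qb(\zeta_p)$, hence is injective. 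Now any $v\in\Lbf_G=\Ibf_G^\perp\cap\LL$ is an integral, hence rational, vector orthogonal to $\LL^{G}_\Rb$, so it lies in $N_\Qb$, and it is orthogonal to $W_\theta$, so its $W_\theta$-component vanishes; by injectivity of the projection this forces $v=0$. Therefore $\Lbf_G=0$, it contains no $(-2)$-vector, and Theorem~\ref{thm:K3examples}(1) yields the desired $G$-invariant Ricci-flat metric.

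The one point requiring care, and the conceptual heart of the argument, is that $\Ibf_G$ is \emph{not} defined over $\Qb$ once $p\ge 5$, since the single rotation type $V_\theta$ is only one of several Galois-conjugate real types packaged inside the irreducible rational module $\Qb(\zeta_p)$. It is precisely the primality of $p$, which forces the coinvariant part to be a module over the \emph{field} $\Qb(\zeta_p)$ rather than over a product of several number fields, that prevents a nonzero integral vector from being supported away from $W_\theta$. This also clarifies the contrast with the case discussed just above in which $G$ fixes $P$ pointwise: there $\Ibf_G$ is merely the rational invariant part, $\Lbf_G$ is the entire coinvariant lattice, and it may well contain $(-2)$-vectors.
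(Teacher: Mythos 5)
Your proposal is correct and follows essentially the same route as the paper, whose entire proof is the one-line assertion that in this situation $\Lbf_G=\{0\}$, so that part (1) of Theorem~\ref{thm:K3examples} applies vacuously. The only difference is that you supply the justification the paper omits — for odd $p$ the real subspace $\Ibf_G^\perp$ need not vanish, and one really does need the observation that the coinvariant part is a $\Qb(\zeta_p)$-vector space whose rational points inject into each Archimedean isotypic component — and that Galois-theoretic detail is carried out correctly.
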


Indeed, in this case $\Lbf_G=\{0\}$ and so the assumption of part (1) is trivially satisfied.  
For this reason we will mostly focus on the case when $G$ fixes a positive-definite $3$-plane in $\LL_\Rb$  {\em pointwise}.(\footnote{In the literature, this condition is often called {\em symplectic}.}) Here part (2) of Theorem~\ref{thm:K3examples} can be both refined and be made more concrete. We first state a general result, and then indicate how these are realized on elliptically fibered K3 surfaces.  We should add however, that this is essentially hidden in the K3 literature and so here our main contribution is  to bring this to light and to present this in a uniform manner.

\begin{theorem}[{\bf Single conjugacy class}]
\label{theorem:classification}
Let $M$ be a K3 manifold. Let $G\subset\Diff(M)$ have prime order $p$ and fix pointwise a 
positive-definite $3$-plane in $\LL_\Rb$ and a complex structure on $M$ (or equivalently, a  Ricci-flat metric). Then $p\in\{2,3,5,7\}$ and for each such $p$, the subgroups of  $\Diff(M)$ as above  make up a single conjugacy class in $\Diff(M)$. Each such $G$ can be realized  geometrically inside the Mordell-Weil group of an elliptic fibration (see below).
\end{theorem}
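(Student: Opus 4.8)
\emph{Reduction to the lattice.} Since $G$ preserves a complex structure it is a finite group of holomorphic automorphisms of a K3 surface acting trivially on the holomorphic $2$-form (this is the meaning of ``fixes a positive-definite $3$-plane pointwise''), i.e.\ a group of symplectic automorphisms. As recalled above, in this situation $\Ibf_G=\LL_\Rb^G$ and $\Lbf_G=S_G:=(\LL^G)^\perp\cap\LL$ is the \emph{coinvariant} lattice, which is negative definite of rank $r=\rank S_G$, so that $\LL^G$ has signature $(3,19-r)$. A generator of $G$ acts on $(S_G)_\Rb$ with eigenvalues the primitive $p$-th roots of unity, whence $(p-1)\mid r$; combining the holomorphic and topological Lefschetz fixed point formulas (Nikulin) sharpens this to $r=(p-1)\frac{24}{p+1}$. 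Requiring this to be an integer $\le 19$ forces $(p+1)\mid 24$ and eliminates $p=11,23$, leaving exactly $p\in\{2,3,5,7\}$. I would cite Nikulin for this bound and for the accompanying facts that, for each such $p$, the coinvariant lattice $S_G$ together with its $G$-action is unique up to isometry and contains no $(-2)$-vector (consistent with Theorem~\ref{thm:K3examples}(1)).

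\emph{Conjugacy of the homological actions.} Given two such subgroups $G_1,G_2$, note first that $\rho|_{G_i}$ is injective, since a nontrivial finite-order automorphism of a K3 surface acts nontrivially on $H^2$; thus $\rho(G_i)\cong\Z/p$. The plan is to show $\rho(G_1)$ and $\rho(G_2)$ are conjugate in $\Orth^+(\LL)$. By the previous step their coinvariant data are abstractly isomorphic, so it suffices to know that the primitive embedding $S_G\hookrightarrow\LL$ is unique up to $\Orth(\LL)$; this is exactly Nikulin's uniqueness criterion for primitive embeddings into the K3 lattice (the discriminant forms together with the large rank of the invariant complement leave a single class in the genus). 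To pass from $\Orth(\LL)$ to $\Orth^+(\LL)$ I would use that $-1\in\Orth(\LL)\smallsetminus\Orth^+(\LL)$ is central: if $g$ conjugates $\rho(G_1)$ to $\rho(G_2)$ but $g\notin\Orth^+(\LL)$, then $-g\in\Orth^+(\LL)$ does the same. Representing the resulting element of $\Orth^+(\LL)\cong\Mod(M)_o$ by a diffeomorphism and conjugating $G_1$ by it, I may assume $\rho(G_1)=\rho(G_2)=G_0$ for one fixed $G_0\subset\Orth^+(\LL)$.

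\emph{Upgrading to conjugacy in $\Diff(M)$.} This is the step demanding the most care, and I expect it, rather than the lattice bookkeeping, to be the main obstacle, as it is where the hypothesis ``no $(-2)$-vector in $\Lbf_{G_0}$'' is genuinely used. The cleanest route is complex-geometric: mark each $X_i=(M,J_i)$ so that its $G_i$-action induces $G_0$ on $\LL$; as $G_0$ is symplectic the period $[\omega_i]$ lies in the period domain of the invariant lattice $\LL^{G_0}$, and the moduli space of marked symplectic $G_0$-K3 surfaces is the quotient of that period domain by the $(-2)$-discriminant, which is connected and, by Theorem~\ref{thm:K3examples}, nonempty. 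Using the $G_0$-equivariant Torelli theorem and surjectivity of the period map, every symplectic $G_0$-action arises over this connected base; an equivariant family over it together with Ehresmann's theorem makes all fibers $G_0$-equivariantly diffeomorphic, which is precisely $\Diff(M)$-conjugacy of $G_1$ and $G_2$. Equivalently, in the Teichm\"uller picture of Proposition~\ref{prop:Lo1} one checks that the $G_0$-fixed locus in a component $\Teich(M)_o$ is $\Ps^{-1}$ of the positive $3$-planes contained in $\LL_\Rb^{G_0}$; since $S_{G_0}=\Lbf_{G_0}$ has no $(-2)$-vector, every $(-2)$-vector of $\LL$ has nonzero projection to $\LL_\Rb^{G_0}$ and so deletes only a codimension-$3$ subset, leaving this fixed locus connected, and a path in it transports one invariant metric, hence one realization, to the other.

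\emph{The Mordell--Weil model.} Finally, to exhibit each class concretely I would produce, for each $p\in\{2,3,5,7\}$, an elliptically fibered K3 surface $X\to\Pb^1$ whose Mordell--Weil group contains a section of order $p$; translation by such a $p$-torsion section is an automorphism of order $p$ fixing the holomorphic $2$-form, hence symplectic, and fixing pointwise a positive-definite $3$-plane. By the uniqueness established above, this standard example represents the unique $\Diff(M)$-conjugacy class for that $p$, which completes the final assertion. The only point needing verification here is the existence of such fibrations for all four primes (the case $p=7$, where $r=18$ saturates the bound, being the tightest), and this is classical.
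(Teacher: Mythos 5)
Your proposal is correct and follows essentially the same route as the paper: reduce to the coinvariant lattice $\Lbf_G\cong\Lbf_p$ (negative definite, no $(-2)$-vectors, with its canonical order-$p$ action trivial on the discriminant), use Nikulin's uniqueness of primitive embeddings together with the centrality of $-1$ to get a single $\Orth^+(\LL)$-orbit of homological actions, upgrade to $\Diff(M)$-conjugacy via the connectedness of the invariant locus in Teichm\"uller space (the complement of a locally finite codimension-$3$ family) carrying a universal family, and exhibit the class by translations in the Mordell--Weil group of an elliptic fibration. The only difference is one of emphasis: you cite Nikulin for the uniqueness of $(\Lbf_p, C_p)$ and lead with the complex-analytic moduli formulation, whereas the paper develops $\Lbf_p$ and its automorphisms self-containedly via the Nikulin lattice $\Nbf_p$ and works primarily with Ricci-flat metrics (Lemma~\ref{lemma:ricciflatmoduli}), passing to complex structures by twistor theory at the end.
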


To elaborate on the last point (and as will be described in detail in Subsection~\ref{sect:ellfib}), for each $(p,\nu)\in \{(2,8), (3,6), (5,4), (7,3)\}$ there exists a complex K3 surface $X$, elliptically fibered with 
$\nu$ fibers of Kodaira type  $I_p$ and $\nu$ fibers of  type $I_1$ (\footnote{Recall that an $I_m$-fiber is for $m>1$ a `cycle' of $(-2)$-curves, $m$ in number, and an $I_1$ fiber is a rational curve with a node.}), admitting an automorphism of order $p$; see Figure~\ref{figure:fiberaction}.  This automorphism is then realized inside its 
{\em Mordell-Weil group} (i.e.\ the group of automorphisms that preserve each fiber and act on it as a translation): it acts in each $I_p$-fiber  as a nontrivial `rotation' (but not necessarily over the same angle - see below). It also acts nontrivially in 
each $I_1$-fiber by ``rotation'' of the rational curve, fixing its unique singular point. See Figure~\ref{figure:fiberaction}.  This accounts for all the $G$-fixed points on $M$.  

The proof of Theorem~\ref{theorem:classification}, which as we noted above is implicit in the literature,  involves a substantial amount of nontrivial lattice theory. We will treat this aspect in a way that is both more conceptual and less computational than in the sources of which we are aware (so that this part might also be of some use for readers mostly interested in K3 surfaces). Along the way we find a construction of the Coxeter-Todd lattice that 
is possibly new.

\begin{figure}[h]
\includegraphics[scale=0.42]{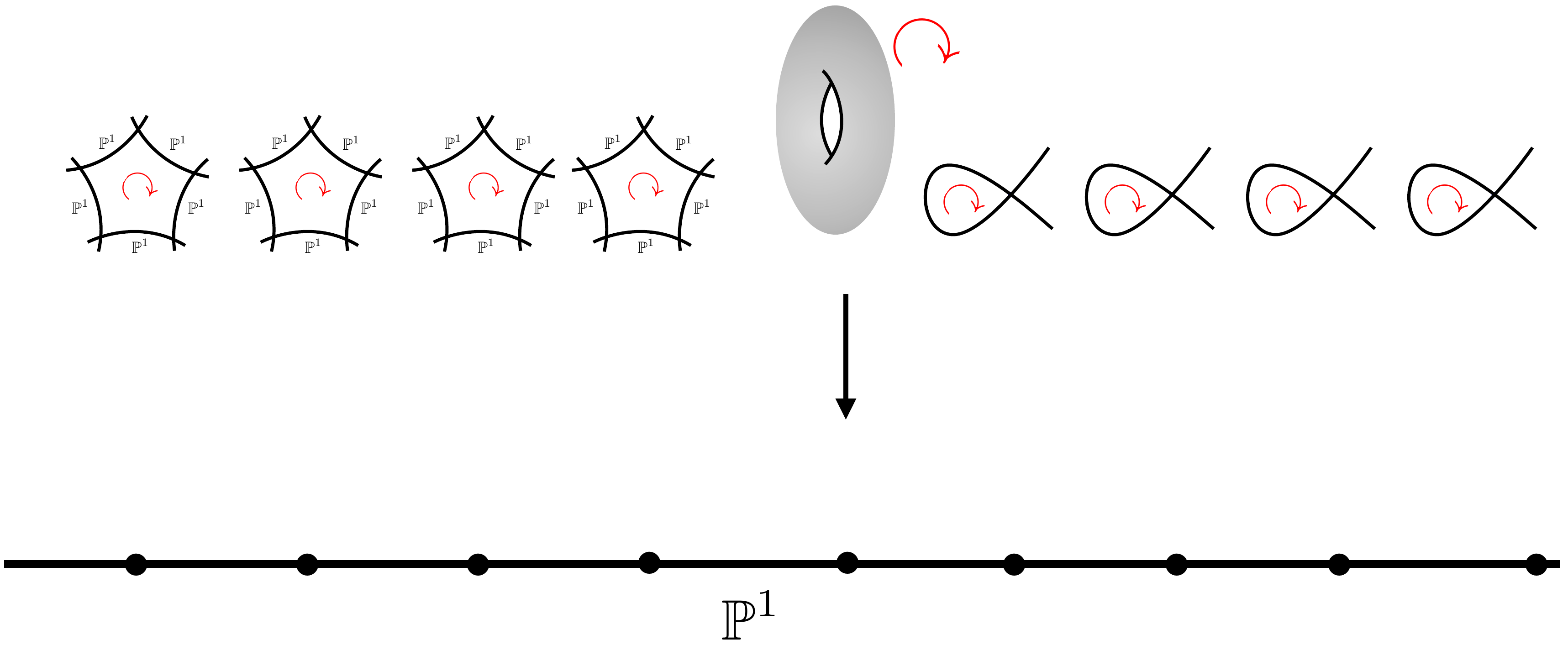}
\caption{\footnotesize The elliptically fibered K3 surface $X_5$.  The generic fiber is a smooth elliptic curve.  
There are $4$ fibers of type $I_5$, consisting of length $5$ $\Pb^1$-chains, and $4$ fibers that are rational nodal curves.  
The cyclic group of order $5$ acts on $X_5$ as the {\em Mordell-Weil group} of $X_5$; that is, the group of complex automorphisms of $X_5$ that preserve each fiber, translate each smooth fiber, rotate the $I_5$ fibers, and rotate the rational nodal curves fixing the singularity. The rotation angles on two of the $I_5$ (resp.\ $I_1$) fibers are twice that of the other two (cf.\ Remark~\ref{rem:ellfibrationcont}).}\label{figure:fiberaction}
\end{figure}

\begin{remark}
The complex structures on $M$ given up to a diffeomorphism that acts trivially on $H_2(M)$ are parametrized by  a connected, complex (but non-Hausdorff) manifold of complex dimension $20$.  Those that admit a group of complex 
automorphisms of order $p$ make up a locus of complex dimension $2\nu-4$  ($=12,8,4,2$ when $p=2,3,5,7$).  Theorem~\ref{theorem:classification}  then amounts to the statement that the irreducible components of this locus are transitively permuted by $\Orth^+(H_2(M))$.  Those that have an elliptic fibration make up a locus of  complex codimension $1$ (so of dimension $2\nu-5$), and those that have in addition a section a locus of complex codimension $2$ 
(and so of dimension $2\nu-6$; note this is then a finite set when $p=7$).  

In other words, there are families of prime order $G$ acting by complex automorphisms on K3s admitting no elliptic fibration (in fact this is generic among K3s with a $G$-action), and yet Theorem~\ref{theorem:classification} gives for each $p$ that any such action is smoothly conjugate to the specific elliptically fibered ``translation'' action given above (cf.\ Figure~\ref{figure:fiberaction}).
\end{remark}

\subsection{(Non)realization for cyclic subgroups}

Even in the smooth category, and even for involutions, there are obstructions for $G\subset\Mod(M)$ to be realized. 

\begin{theorem}[{\bf Smooth Nielsen (non)realization for involutions}]\label{theorem:main2}
Let $G\subset\Mod(M)$ be cyclic of order $2$. Then the nontrivial element of $G$ is  realized by a smooth involution  of $M$ if and only if  $\Lbf_G$ contains no $(-2)$-vectors. In this case the involution preserves some Ricci-flat K\"ahler structure on $M$.
\end{theorem}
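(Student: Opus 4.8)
The plan is to reduce to the homological Nielsen realization already settled in Theorem~\ref{thm:K3examples}, and then to fight the two discrepancies between a homological statement and a statement about a single mapping class: matching the precise isotopy class, and --- in the obstruction direction --- producing enough structure from an \emph{a priori} merely smooth involution.

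First I would record the relevant case analysis. Set $\sigma:=\rho(g)\in\Orth^+(\LL)$, so $\sigma^2=1$. As $\sigma$ preserves the spinor orientation, it acts on a $\sigma$-invariant positive-definite $3$-plane $P$ through $\sigma|_P\in\SO(P)$ of order dividing $2$, whose eigenvalues are thus $(1,1,1)$ or $(1,-1,-1)$. In the latter case (and when $\sigma=1$) both isotypic types occurring are already present in $P$, so $\Ibf_G=\LL_\Rb$ and $\Lbf_G=0$; the $(-2)$-condition is then vacuous and realizability is a pure isotopy-class question, handled below. The substantive case is $\sigma|_P=\mathrm{id}$ (the ``symplectic'' case), where $\Ibf_G=\LL^{\sigma}_\Rb$ and $\Lbf_G=\LL^{-}:=\{x\in\LL:\sigma x=-x\}$ is the anti-invariant lattice, which is negative definite. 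In all cases the trivial representation occurs in $\Lbf_G^\perp=\Ibf_G$ (the eigenvalue $+1$ always appears on $P$), so the extra hypothesis of Theorem~\ref{thm:K3examples}(2) is automatic: \emph{once the $(-2)$-vectors are excluded, the K\"ahler--Einstein refinement comes for free}, and it remains only to decide realizability.

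For the ``if'' direction, assume $\Lbf_G$ contains no $(-2)$-vector. Theorem~\ref{thm:K3examples}(2) applied to $\langle\sigma\rangle\subset\Orth^+(\LL)$ produces a complex structure with a Ricci-flat K\"ahler metric and a holomorphic involution $\tau$ with $\rho(\tau)=\sigma$; being an isometry, $\tau$ fixes a point of $\Teich(M)$ and hence preserves the component $\Teich(M)_o$ it meets, so $[\tau]$ is the canonical lift $\tilde\sigma\in\Mod(M)_o\cong\Orth^+(\LL)$ of $\sigma$. The work is then to replace $\tilde\sigma$ by the \emph{given} class $g$. Since $\rho(g)=\sigma=\rho(\tilde\sigma)$, we have $g=d\,\tilde\sigma$ with $d\in\Tor(M)$, and $g^2=1=\tilde\sigma^2$ forces $d\cdot{}^{\tilde\sigma}\!d=1$; conjugating $\tau$ by $t\in\Tor(M)$ replaces $d$ by $t\,d\,({}^{\tilde\sigma}\!t)^{-1}$. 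Thus $g$ is realized precisely when this twisted $\langle\sigma\rangle$-class in $\Tor(M)$ vanishes, equivalently when $g$ is $\Tor(M)$-conjugate to $\tilde\sigma$, i.e.\ when $g$ itself preserves a component of $\Teich(M)$. Establishing this last point for an order-$2$ class --- rather than assuming it, as in the final clause of Theorem~\ref{thm:K3examples} --- is the delicate part of this direction, and I expect it to rest on the structure of $\Tor(M)$ together with the order-$2$ cocycle relation above.

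For the ``only if'' direction I start from an honest smooth involution $\tau$ with $[\tau]=g$ and must exclude norm-$(-2)$ vectors from $\Lbf_G=\LL^{-}$ (only the symplectic case is at issue). The cleanest route would be to show $\tau$ preserves a Ricci-flat metric: average any metric to a $\tau$-invariant one and deform it equivariantly, using that on a K3 manifold nonnegative scalar curvature already forces Ricci-flatness (cf.\ Remark~\ref{rem:}); then $\tau$ fixes a point of $\Teich(M)$, $\langle g\rangle$ becomes a Ricci-flat isometry group, and the ``only if'' half of Theorem~\ref{thm:K3examples}(1) yields the conclusion. Since that deformation argument is exactly the point left open in Remark~\ref{rem:}, I would instead argue directly with the fixed-point set: the Atiyah--Singer $G$-signature theorem expresses $\mathrm{sign}(H^{+})-\mathrm{sign}(H^{-})$ in terms of the self-intersections of the two-dimensional components of $\Fix(\tau)$ (isolated fixed points contributing trivially for an involution), while the equivariant $\mathrm{Spin}$/Rokhlin constraints on the even anti-invariant form $\LL^{-}$ of a smooth involution on the spin manifold $M$ restrict its values enough to forbid a vector of norm $-2$ (for a genuine Nikulin involution $\LL^{-}\cong\Ebf_8(-2)$, whose norms lie in $4\Zb$). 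This is the involution-level incarnation of the slogan that Dehn twists are not realized by finite-order diffeomorphisms, and extracting the sharp constraint here --- showing that no smooth involution can carry an anti-invariant $(-2)$-class --- is the main obstacle of the whole theorem.
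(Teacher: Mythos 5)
Your overall architecture matches the paper's: the ``if'' direction is delegated to Theorem~\ref{thm:K3examples} (and your observation that the trivial representation automatically occurs in $\Lbf_G^\perp$ for an involution, so that the K\"ahler--Einstein refinement comes for free, is exactly how the paper argues), while the ``only if'' direction must come from an honest constraint on smooth involutions. But that ``only if'' direction is where the entire content of the theorem lies, and your proposal does not prove it --- it names the desired conclusion (``no smooth involution can carry an anti-invariant $(-2)$-class'') and gestures at ``$G$-signature plus equivariant Spin/Rokhlin constraints.'' These tools are not sufficient. For an involution the $G$-signature theorem only controls the \emph{total} self-intersection $\sum_i C_i\cdot C_i$ of the two-dimensional fixed components; it says nothing about individual vectors of the anti-invariant lattice. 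The paper's actual argument (Proposition~\ref{proposition:involution} together with Corollary~\ref{cor:nikulininvolution}) needs three further inputs that you do not mention: first, the Atiyah--Bott spin result forcing all fixed components to have equal dimension, together with Edmonds' theorem that the fixed surfaces carry independent classes in $H_2(M;\Fb_2)$ and hence nonzero integral classes; second, the Kronheimer--Mrowka adjunction inequality from Seiberg--Witten theory, which gives $\chi(C_i)+C_i\cdot C_i\le 0$ and, against the identity $\sum_i\bigl(\chi(C_i)+C_i\cdot C_i\bigr)=8$, rules out two-dimensional fixed components entirely, so that $\Fix(\tau)$ is $8$ points and $H_2(M)\cong\Zb[G]^{8}\oplus\Zb^{6}$ as a $\Zb[G]$-module; and third, the resolved-orbit-space and Nikulin-lattice computation of the discriminant form, which is what distinguishes $\Lbf_G\cong\Ebf_8(-2)$ from the other even candidate $\Abf_1(-2)^{\oplus 8}$ --- note that the latter is also negative definite of rank $8$ with discriminant group $(\Fb_2)^{8}$, and it \emph{does} contain $(-2)$-vectors, so your parenthetical ``whose norms lie in $4\Zb$'' presupposes exactly what has to be shown.

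Two smaller points. Your claim that the case $\sigma=1$ is ``handled below'' is not borne out: if $g\neq 1$ acts trivially on $\LL$ then $\Lbf_G=0$ and your criterion would predict realizability, and what actually disposes of this case in the paper is the Ruberman--Matumoto theorem (a smooth involution acting trivially on $H_2$ of a spin $4$-manifold with $H_1=0$ forces signature $0$), which your proposal never invokes. On the other hand, your analysis of the isotopy-class-matching problem in the ``if'' direction (the twisted $\Tor(M)$-cocycle, equivalently whether $g$ preserves a component of $\Teich(M)$) is a fair description of a genuine delicacy that the paper's own proof passes over by identifying $G$ with its image in $\Orth^+(\LL)$; you are not behind the paper there, but you are not ahead either, since you leave it open as well.
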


\begin{remark}\label{remark:implications1}
Note that Theorem~\ref{thm:K3examples} gives the `if' part of the last sentence of Theorem~\ref{theorem:main2}. The main content of Theorem~\ref{theorem:main2} is therefore the `only if' part.
\end{remark}

An interesting example to which Theorem~\ref{theorem:main2} applies is a {\em Dehn twist}  $T_S:M\to M$ about an embedded $2$-sphere $S\subset M$ with $S\cdot S=-2$
(this intersection number is independent of the orientation of $S$).    It is known (see, e.g.\cite{Se}, Proposition 1.1)  that $T_S^2$ is smoothly isotopic to the identity. However, Theorem~\ref{theorem:main2} (or its proof, for the stronger result claimed) implies the following (which we will prove in detail below.)  

\begin{corollary}[{\bf Twists not realizable}]\label{thm:dehntwistproperty}
Let $T_S\in\Diff(M)$ be the Dehn twist about an embedded $2$-sphere $S\subset M$ with $S\cdot S=-2$.  Although $T_S^2$ is smoothly isotopic to the identity, $T_S$ is not even topologically isotopic to any finite order diffeomorphism of $M$.
\end{corollary}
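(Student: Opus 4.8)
The plan is to identify $T_S$ with the reflection it induces on $\LL$, compute $\Lbf_G$ for the cyclic group it generates so as to invoke Theorem~\ref{theorem:main2}, and then upgrade the conclusion from smooth involutions to arbitrary finite-order diffeomorphisms up to topological isotopy.

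First I would record the homological action. Writing $v:=[S]\in\LL$ we have $v\cdot v=-2$, and $\rho(T_S)$ is the reflection $s_v\colon x\mapsto x+(v\cdot x)v$, an element of $\Orth^+(\LL)$ of order $2$. Since $T_S^2$ is smoothly isotopic to the identity while $\rho(T_S)=s_v\neq 1$, the mapping class of $T_S$ generates a subgroup $G\subset\Mod(M)$ isomorphic to $\Zb/2$. To apply Theorem~\ref{theorem:main2} I compute $\Lbf_G$. The fixed space of $s_v$ in $\LL_\Rb$ is $v^\perp$, of signature $(3,18)$, and the $(-1)$-eigenspace is the negative-definite line $\Rb v$. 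Any $G$-invariant positive-definite $3$-plane $P$ must therefore lie in $v^\perp$, so $G$ acts trivially on $P$; hence $\Ibf_G$ is the trivial-isotypic part of $\LL_\Rb$, namely $v^\perp$, and
\[
\Lbf_G=\Ibf_G^\perp\cap\LL=(\Rb v)\cap\LL\ni v.
\]
Thus $\Lbf_G$ contains the $(-2)$-vector $v$, and Theorem~\ref{theorem:main2} shows that $T_S$ is not realized by any smooth involution.

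Next I would strengthen this to topological isotopy and arbitrary finite order. If $T_S$ were topologically isotopic to a finite-order diffeomorphism $\phi\in\Diff(M)$, then $\phi$ would be homotopic to $T_S$ and hence $\phi_*=\rho(T_S)=s_v$ on $H_2(M)$; note that $\phi$ is itself a (smooth) diffeomorphism, so it suffices to show that no finite-order $\phi\in\Diff(M)$ induces $s_v$. Here one cannot simply average to a Ricci-flat metric and contradict the period description of Theorem~\ref{thm:K3examples} (by Remark~\ref{rem:} we do not know that a finite $G$ preserves such a metric), so the smooth obstruction of Theorem~\ref{theorem:main2} is genuinely needed. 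Replacing $\phi$ by $\phi^q$, where $q$ is the odd part of its order, I may assume $\phi$ has order $2^a$, since $s_v^q=s_v$. For $a=1$ this is a smooth involution inducing $s_v$, contradicting the previous paragraph. For $a\ge 2$ the point is that the non-realizability established in the proof of Theorem~\ref{theorem:main2} is an obstruction depending only on the homological type $s_v$: the equivariant signature $\mathrm{Sign}(\phi,M)=-14$ and the Lefschetz–Euler count $\chi(\Fix\phi)=22$ depend only on $\phi_*=s_v$, and combined with the spin condition $w_2(M)=0$ (via the $G$-signature and $G$-Spin/$\hat{A}$ fixed-point formulas, using that $\phi$ fixes the invariant positive-definite $3$-plane $P\subset v^\perp$ pointwise) these yield the same numerical incompatibility for $\Fix(\phi)$ as in the involution case. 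This rules out every $a$, hence every finite order.

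The main obstacle is this last step. For $a\ge 2$ one cannot reduce to the statement of Theorem~\ref{theorem:main2}, because every odd power of an order-$2^a$ element again has order $2^a$, so \emph{no} power of $\phi$ is an involution inducing $s_v$; the reduction to the involution case fails precisely here. One is therefore forced to run the index-theoretic obstruction behind Theorem~\ref{theorem:main2} directly for $\phi$ of order $2^a$, where the local rotation data at the fixed components are $2^a$-th roots of unity rather than $\pm 1$. Pinning down the sign of the spin lift at the isolated fixed points (together with the contribution of any fixed surfaces) and checking that the resulting $G$-signature and $G$-Spin contributions of $\Fix(\phi)$ are genuinely incompatible is the delicate computation, and is where I expect the real work to lie.
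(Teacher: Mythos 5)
Your computation of $\Lbf_G$ and the application of Theorem~\ref{theorem:main2} to rule out a smooth involution inducing $s_v$ is correct (the paper itself argues directly from Proposition~\ref{proposition:involution} rather than through Theorem~\ref{theorem:main2}, but that is a cosmetic difference). The genuine gap is in your treatment of finite order $2^a$ with $a\ge 2$. You assert that ``the reduction to the involution case fails precisely here'' because no power of $\phi$ is an involution \emph{inducing $s_v$}, and you then leave the alleged replacement --- a $G$-signature/$G$-Spin computation with $2^a$-th roots of unity --- unexecuted, explicitly flagging it as ``where I expect the real work to lie.'' As written, the corollary is therefore not proved for $a\ge 2$.

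But the premise is false: the reduction to involutions does succeed. If $\phi$ has order $2^a$ with $a\ge 2$ and $\phi_*=s_v$, then $\psi:=\phi^{2^{a-1}}$ is a smooth involution with $\psi_*=s_v^{2^{a-1}}=\mathrm{id}$, i.e.\ a \emph{homologically trivial} smooth involution of $M$. This is already impossible: by the Ruberman--Matumoto theorem quoted in the paper's proof of Theorem~\ref{theorem:main2}, a closed spin $4$-manifold with $H_1=0$ admitting such an involution has signature $0$, whereas $\sigma(M)=-16$; equivalently, Proposition~\ref{proposition:involution} forces $H_2(M)$ to have eight regular $\Zb[\la\psi\ra]$-summands, hence eight $(-1)$-eigenvalues, while the trivial action has none. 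This is exactly how the paper argues: for $g$ of order $2m$ with $g_*=s_v$, the involution $g^m$ induces either $s_v$ (one $(-1)$-eigenvalue) or the identity (zero), and both contradict the eight regular summands demanded by Proposition~\ref{proposition:involution}. So the delicate fixed-point computation you anticipate is not needed, and without either it or the observation above, your argument does not close.
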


One can also show for example that products of disjoint twists cannot be smoothly realized as finite subgroups of $\Diff(M)$.

\para{Involutions of Nikulin type}  Nikulin showed that the involutions of a $K3$-surface $X$ that act as the identity on $H^{2,0}(X)$ are all alike: they have eight fixed points, so that  the orbit space will have  eight-double points, and resolving these double points produces a $K3$-surface. The K3 surfaces endowed with such an involution make up a  single connected family.  As Morrison \cite{Mor} observed, the action of the  involution on their second homology is equivalent to the involution of the $K3$-lattice $\Ubf^{\oplus 3}\oplus \Ebf_8(-1)^{\oplus 2}$ that exchanges the last two summands.  This motivates the following definition.

\begin{definition}\label{def:nikulintype}
An element of order $2$ in $\Diff(M)$ is \emph{topologically} resp.\ \emph{differentiably of Nikulin type} if $f$ has eight fixed points points and the orbit space is, after a simple resolution 
of the resulting $8$ quotient singularities, a closed $4$-manifold homeomorphic resp.\  diffeomorphic to a $K3$-surface. 
\end{definition}

The main ingredient in our proof of Theorem~\ref{theorem:main2} is the following theorem, which characterizes involutions of Nikulin type in terms of purely homological data.  We remind the reader that any finite order 
diffeomorphism leaves invariant a positive $3$-plane in $\LL_\Rb$.

\begin{theorem}[{\bf Involutions of Nikulin type}]
\label{theorem:nikulin:type}
A smooth involution of the $K3$ manifold $M$ that fixes pointwise some positive $3$-plane in $\LL_\Rb$ is  topologically of Nikulin type and acts on $\LL$ as a Nikulin involution.
\end{theorem}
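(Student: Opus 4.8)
The plan is to reduce the theorem to an analysis of $\Fix(f)$ together with two equivariant index theorems, and then to recognize the resolved quotient through Freedman's classification. First I would average a metric to obtain an $f$-invariant Riemannian metric $g$ and consider the space $\mathcal{H}^+$ of $g$-self-dual harmonic $2$-forms, a maximal positive-definite (hence $3$-dimensional) subspace of $H^2(M;\Rb)=\LL_\Rb$. It is $f$-invariant; and because the $(-1)$-eigenspace of $f^*$ on $\LL_\Rb$ is orthogonal to the $(+1)$-eigenspace, hence to the fixed positive $3$-plane $P$, and $P^\perp$ is negative definite, that $(-1)$-eigenspace is negative definite. Consequently $\mathcal{H}^+$ lies entirely in the $(+1)$-eigenspace, i.e.\ $f^*=\mathrm{id}$ on $\mathcal{H}^+$. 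Writing $r=\rank\LL^f$ and $s=\rank\LL_f=22-r$, the intersection form then has signature $(3,r-3)$ on $\LL^f_\Rb$ and is negative definite on $\LL_{f,\Rb}$; this is the smooth shadow of the symplectic condition.

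Next I would pin down $\Fix(f)$. As the fixed set of an orientation-preserving smooth involution it is a disjoint union of isolated points, where $df=-\mathrm{id}$, and of surfaces, where $df=\mathrm{diag}(1,1,-1,-1)$. I extract three relations. The topological Lefschetz theorem gives $\chi(\Fix f)=2+\tr(f^*\mid H^2)=2r-20$. The Atiyah--Singer $G$-signature theorem gives, cohomologically, $\mathrm{Sign}(f,M)=28-2r$ (the difference of the signatures of the form on the two eigenspaces), while geometrically each isolated point contributes $0$ (rotation by $\pi$ in both normal directions) and each fixed surface $F$ contributes $[F]^2$; hence $\sum_F[F]^2=28-2r$. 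Finally, $M$ is spin with a unique, hence $f$-invariant, spin structure, so the $G$-index theorem for the Dirac operator yields a further relation in which each isolated point contributes $\pm\tfrac14$. The main obstacle is precisely here, namely to show there are no fixed surfaces: the positive-$3$-plane hypothesis is what makes this feasible, since it locates all of $b^+=3$ in the $(+1)$-eigenspace and so expresses the cohomological sides of both index formulas through the single unknown $r$; playing the signature relation (which sees only surfaces) against the integrality of the spin relation (which sees the isolated points) should rule out fixed surfaces and force $r=14$, $s=8$, and $\#\Fix(f)=8$.

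With this data the two conclusions follow. Homologically, $\LL^f$ has signature $(3,11)$ and rank $14$ while $\LL_f$ is an even negative-definite lattice of rank $8$; both are $2$-elementary with isomorphic discriminant forms since $\LL$ is unimodular, and Nikulin's uniqueness results for such lattices and their primitive embeddings into $\LL$ identify $\LL_f\iso\Ebf_8(-2)$ and $\LL^f\iso\Ubf^{\oplus 3}\oplus\Ebf_8(-2)$, glued in the standard way. Hence $f^*$ is conjugate in $\Orth^+(\LL)$ to the involution exchanging the two $\Ebf_8(-1)$-summands, i.e.\ a Nikulin involution. Geometrically, near each of the $8$ fixed points $f$ is modeled on $-\mathrm{id}$ on $\Cb^2$, so $M/f$ has exactly $8$ ordinary double points; let $\widetilde Y$ be their simultaneous minimal resolution. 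Then $\pi_1(\widetilde Y)=1$ (the branch locus is nonempty and resolving rational double points does not change $\pi_1$), and a short computation using $H^2(M/f;\Qb)=\LL^f_\Qb$ together with the $8$ exceptional classes gives $b_2(\widetilde Y)=22$ and an even unimodular intersection form of signature $-16$. By Freedman's classification of simply-connected topological $4$-manifolds, $\widetilde Y$ is homeomorphic to a K3 surface, so $f$ is topologically of Nikulin type, which completes the proof.
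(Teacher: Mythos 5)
The critical step---showing that $\Fix(f)$ has no $2$-dimensional components---is exactly where your argument breaks down, and the mechanism you propose cannot close the gap. By Atiyah--Bott (Prop.\ 8.46 of \cite{AB}) a spin-structure-preserving involution has fixed components all of the same dimension, $0$ or $2$; in the all-surface case the equivariant Dirac index ``sees'' no isolated points at all, and both it and the $G$-signature theorem return only linear relations and congruences among the $\chi(C_i)$ and $[C_i]^2$. Concretely, Lefschetz plus $G$-signature combine to give $\sum_i\bigl(\chi(C_i)+C_i\cdot C_i\bigr)=8$, and a configuration such as four fixed spheres of square $0$ (with $r=14$) satisfies every classical index-theoretic constraint, including integrality of the equivariant spin index. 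What actually kills the surface case in the paper is a gauge-theoretic input: Edmonds' Corollary 2.6 guarantees that the fixed surfaces carry nonzero integral homology classes, evenness of the intersection form handles the components with $C_i\cdot C_i<0$, and the Kronheimer--Mrowka adjunction inequality (Seiberg--Witten theory) then forces $\chi(C_i)+C_i\cdot C_i\le 0$ for every component, contradicting the sum being $8$. Your ``integrality of the spin relation'' is not a substitute for this. Once surfaces are excluded, your derivation of $r=14$ and $\#\Fix(f)=8$ is correct and agrees with Proposition \ref{proposition:involution}.

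There is a secondary gap in the lattice identification: $\Lbf_f\cong \Ebf_8(-2)$ needs more than an appeal to ``Nikulin's uniqueness''. You must first show that the discriminant group is $(\Fb_2)^8$ (the paper obtains the $8$ regular $\Zb[G]$-summands from Edmonds' Proposition 2.4 applied to the $8$ fixed points and then applies Lemma \ref{lemma:nocyclotomic}), and you must then exclude the other even negative definite $2$-elementary rank-$8$ lattice $\langle -2\rangle^{\oplus 8}$, which has the same rank and discriminant group but a different discriminant form (and plenty of $(-2)$-vectors). The paper does this by computing the discriminant form of the Nikulin lattice $\Nbf_2$ inside the resolved quotient (Corollary \ref{cor:nikulininvolution}); some such computation is unavoidable. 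Your Freedman step for the topological Nikulin-type conclusion is essentially Proposition \ref{prop:class} of the paper, though you should justify evenness of the intersection form of the resolution (the paper extends the spin structure across the resolved quotient and invokes Wu's formula).
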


\para{Cyclic subgroups of order $p\geq 3$}
Unlike for the $p=2$ case (Theorem \ref{theorem:main2}) we do not have a definitive result for $p$ odd, but 
we are able to prove in many cases a dichotomy for realizable cyclic $G$ of prime order, into ``Nikulin type'' and ``Coxeter type''; see Theorem~\ref{thm:dich}.

\subsection{Concluding remarks}

We state here a basic result regarding the finite subgroups of $\Orth^+(\LL)$, which although not used  in this paper,  might be helpful for what follows. Let $G$ be a finite subgroup of $\Orth^+(\LL)$. The reflections  with respect to $(-2)$-vectors in the negative definite lattice $\Lbf_G$ generate  a finite Coxeter subgroup $W\subset \Orth^+(\LL)$. If  $C$ is a connected component of the complement of the union of the reflection hyperplanes in $\Lbf_G\otimes\Rb$ with respect to the $(-2)$-reflections in $W$ (a \emph{chamber}), then $G=G_C\rtimes (W\cap G)$, where $G_C$ denotes the $G$-stabilizer of $C$. The Torelli theorem for K3 surfaces implies that there exists a complex structure $J$ on $M$ such that $G_C$ is realized as a group of automorphisms of $(M,J)$. Although Theorem \ref{thm:dehntwistproperty} shows that any $(-2)$-reflection in $W$ cannot be realized as a smooth involution, it is possible  for $W\cap G$ to be nontrivial without containing such elements and so we cannot exclude the possibility that $G$ lifts to $\Diff(M)$, even when $W\cap G\not=\{1\}$.

\para{Methods} The methods used in this paper are varied. To constrain fixed sets we use the theory of finite group actions on manifolds 
(e.g.\  the Hirzebruch $G$-signature theorem and Smith theory by way of the work of Edmonds), and an adjunction formula of Kronheimer-Mrowka 
coming out of Seiberg-Witten theory. We analyze $\Mod(M)$ via its action on an appropriate Teichm\"{u}ller space, using a differential-geometric version of the Torelli theorem for K3 surfaces.  Finally, much of the work in this paper involves lattice theory.

\para{Relation to other work}  
Giansiracusa-Kupers-Tshishiku \cite{GKT} proved that the surjection $\pi:\Diff(M)\to \Mod (M)$ does not split over any finite index subgroup $\Gamma\subseteq\Mod(M)$.  Corollary \ref{thm:dehntwistproperty} implies 
this theorem when $\Gamma=\Mod(M)$; indeed it gives that $\pi$ does not even 
split over certain order $2$ subgroups of $\Mod(M)$.  We will see below, however, that 
this corollary can also be derived from a theorem of Bryan \cite{Bry}. We also mention that Baraglia and Konno \cite{BK} recently 
constructed a topological involution of $M$ which is not conjugate to a differentiable involution.

\bigskip
Since all finite group actions on a K3 manifold that we are aware of  fix a Ricci-flat metric we are led to ask:

\begin{question}\label{quest:}
Does every finite subgroup of $\Diff(M)$ leave invariant a Ricci-flat metric?
\end{question}

Another ``question'' (which we do not wish to advertise as explicitly as the one above, because it is too vague)  is the following.  As we indicated, the finite subgroups of $\Diff(M)$ that fix a positive-definite $3$-plane in $\LL_\Rb$ pointwise correspond to what in the 
K3 community are called symplectic automorphism groups. 
A well-known theorem of Mukai  \cite{Mu} characterizes the isomorphism classes of such groups as the subgroups  of the Mathieu group $M_{24}$ subject to certain properties ($M_{24}$ is a subgroup of the symmetric group $\Sc_{24}$ and the subgroups  $G\subset M_{24}$ in question 
are those that fix an element and have at least $5$ orbits).  Since a clear conceptual link between these two classes of finite groups  has not 
been found yet, one may wonder whether the passage to $\Mod(M)$ or $\Diff(M)$ can shed some light on this.

\begin{question}
The subgroup of $\Mod(M)$ generated by the Dehn twists defined by smoothly embedded $(-2)$-spheres in $M$ generate a normal subgroup 
$\Dehn(M)\subset \Mod(M)$. It is not hard to see that 
$\Dehn(M)$ contains the stabilizer of every connected component of $\Teich(M)$. This leads us to ask: what is $\Tor(M)\cap \Dehn(M)$?
Note that if that intersection is trivial, then we have a direct product decomposition $\Mod(M)=\Tor(M)\times \Dehn(M)$ with $\rho$ mapping
$\Dehn(M)$ isomorphically onto $\Orth^+(\LL)$. But it could well be that $\Dehn(M)$ is of finite index in $\Mod(M)$, if not all of $\Mod(M)$ just as for closed, connected, oriented 2-manifolds.
\end{question}

\para{Acknowledgements} We would like to thank Andras Stipsicz and Shing-Tung Yau 
for helpful email exchanges, and David Baraglia, Trevor Hyde and Jim Davis for comments and corrections on an earlier draft of this paper.  We are grateful to Curt McMullen for his extensive comments on that draft, and for his many useful suggestions.  Finally, we thank two anonymous referees for their comments and corrections.

\section{Metric and complex Nielsen Realization}\label{sect:nielsen}

The goal of this section is to prove Theorem~\ref{thm:K3examples} and Theorem~\ref{theorem:example1}.  

\para{Notation}
By a \emph{lattice} we shall always mean a free abelian group $\Lbf$ of finite rank endowed with symmetric bilinear form 
$(v,v')\in \Lbf\times \Lbf\mapsto v\cdot v' \in \Zb$.  In this context a direct sum is always understood as an orthogonal direct sum.

Unless mentioned otherwise, we will assume that a lattice $\Lbf$ is nondegenerate, so that the associated map from $\Lbf$ to its 
dual $\Lbf^\vee:=\Hom (\Lbf, \Zb)$ is injective. We then identify $\Lbf^\vee$ with the set of $x\in\Qb\otimes \Lbf$ for which $x\cdot v\in\Zb$ for all $v\in \Lbf$. 
The \emph{discriminant group} $\Lbf^\vee/\Lbf$ is finite and inherits from $\Lbf$ a symmetric  bilinear form  that takes values in $\Qb/\Zb$. 
In case the form is even, so that it comes from a quadratic form (namely $v\in \Lbf\to \frac{1}{2} v\cdot v \in \Zb$) and the discriminant group inherits such a form 
as well (again taking values in $\Qb/\Zb)$. If $n$ is a nonzero integer, then we denote by $\Lbf(n)$ the lattice whose underlying group is $\Lbf$, but
 for which the form has been multiplied by $n$. So $\Lbf(n)^\vee=\frac{1}{n}\Lbf^\vee$.

We make the notational distinction between the type of a root system of equal root length
and the lattice it generates (for which all roots have square length $2$) by in the latter case using a bold font. For example, $\Ebf_8$ is an even unimodular positive definite lattice of rank $8$ and
its vectors $v$ with $v\cdot v=2$ make up a root system of type $E_8$.

\subsection{The Torelli Theorem for K3 surfaces}
Before we begin the proof of Theorem~\ref{thm:K3examples} proper, we review the various formulations of the Torelli theorem for K3 surfaces (see for example \cite{Lo}).
Suppose $M$ is endowed with a complex structure $J$ (so  that   by  \cite{friedman-morgan} it becomes a K3 surface). It is known that  the automorphism group $\Aut (M, J)$ acts faithfully on $\LL$ and so can be considered as a subgroup of $\Orth^+(\LL)$. The complex structure gives rise to
a Hodge decomposition of $\LL$ with $\dim H^{2,0}(M,J)=1$, and basic Hodge theory tells us that this complex line is isotropic
for the complex-bilinear extension of the intersection pairing and positive definite for the hermitian extension. This implies
that  $H^{2,0}(M,J)$ maps under the projection $\LL_{\Cb}\to\LL_{\Rb}$ (taking the real part)  onto a positive-definite $2$-plane $\Pi_J\subset \LL_{\Rb}$.  This $2$-plane $\Pi_J$ inherits a natural orientation from $H^{2,0}(M,J)$, and as such it  is easily seen to be a complete invariant of the Hodge structure. 

The Torelli theorem states that more is true:
$\Pi_J$ determines $J$ up to a diffeomorphism that fixes $\Pi_J$ pointwise.  It also tells us that every oriented, positive-definite $2$-plane $\Pi$ so arises.
In other words, such a  $\Pi$ determines a complex structure $J$ up to diffeomorphism that fixes $\Pi$ pointwise. However that diffeomorphism may act nontrivially on $\LL$. Indeed, $\Aut (M, J)$ can be a proper subgroup of the $\Orth^+(\LL)$-stabilizer $\Orth^+(\LL)_\Pi$ of $\Pi$.  

In order to state a  more precise result, first note that
the orthogonal complement $\Pi^\perp\subset \LL_{\Rb}$ has hyperbolic signature $(1,19)$, so the set of $x\in \Pi^\perp$ with $x\cdot x>0$ consists of a pair of antipodal cones.  The spinor orientation and the given orientation of $\Pi$ single out one of them, denoted $C_\Pi$,  to which we will refer as the \emph{positive cone}.
The  subgroup $W_{\Pi}\subset \Orth^+(\LL)_\Pi$ generated by the orthogonal reflections in the  $(-2)$-vectors in $\LL\cap \Pi^\perp$ preserves  $C_\Pi$ and acts on it as a Coxeter group. The collection of  reflection hyperplanes is locally finite on $C_\Pi$ and decomposes $C_\Pi$ into faces.
The group $W_\Pi$ acts simply transitively of the set of open faces. We refer to these as \emph{$\Pi$-chambers}. 

One way to phrase a strong form of the Torelli theorem is the assertion that for every $\Pi$-chamber $K\subset C_\Pi$ there exists
a complex structure $J$ for which $\Pi_J=\Pi$ and $K$ is the set of classes of K\"ahler forms relative to $J$.  This complex structure is indeed unique up to a diffeomorphism that acts trivially on $\LL$. Moreover, $\Aut(M,J)$ maps isomorphically onto the $\Orth^+(\LL)$-stabilizer of the pair $(\Pi,K)$.
In particular, to give a finite subgroup of  $\Aut(M,J)$ is to give a finite subgroup $G$ of  $\Orth^+(\LL)_{\Pi,K}$. Since $K$ is a convex (open) cone, this is equivalent to asking that $G$ has  fixed point $\kappa\in K$, which is then the class of a K\"ahler form.

There is a further refinement of the above that involves the twistor construction and that is relevant here. According to Yau's solution of the Calabi conjecture, $\kappa$ 
 is representable by a unique K\"ahler form for which the associated metric $s$ is Ricci-flat
(so that we have  a Ricci-flat K\"ahler metric). As a consequence, $G$ will fix this Ricci-flat K\"ahler metric. The harmonic $2$-forms that are self-dual for the $\star$-operator  define the $3$-dimensional positive-definite space 
$P=\Pi +\Rb \kappa$.  Note that by  construction, $P$ is not perpendicular to any $(-2)$-vector. The twistor construction (see for example \cite{HKLR})  shows that the $\SO (P)$-orbit of the pair $(\Pi, \kappa)$ defines a family of Ricci-flat K\"ahler structures on $M$ for which $s$ is the associated metric.  This enables us to state the  Torelli theorem in differential-geometric terms: For the purposes of reference we formulate this as a proposition.

\begin{proposition}[{\bf The $3$-plane of self-dual harmonic forms}]\label{prop:Lo1} 
Given any Ricci-flat metric $s$ on $M$ for which $M$ has unit volume, let $P_s$ be the positive-definite $3$-plane of harmonic (in the $s$ metric) self-dual $2$-forms.  
Then :
\begin{enumerate}
\item $P_s$ is not perpendicular to any $(-2)$-vector.
\item $P_s$ determines  $s$ on $M$ up to an element of the Torelli group. 
\item Any positive-definite $3$-plane $P\subset \LL_{\Rb}$ that is not perpendicular to a $(-2)$-vector so 
arises.
\end{enumerate}
\end{proposition}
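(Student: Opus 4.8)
The plan is to deduce all three assertions from the strong form of the Torelli theorem recalled above, together with Yau's theorem \cite{Yau} and the hyperk\"ahler (twistor) structure carried by a Ricci-flat metric. The starting point I would use is that a Ricci-flat metric $s$ on $M$ has holonomy in $\SU(2)$, hence is hyperk\"ahler: its self-dual harmonic $2$-forms span the positive-definite $3$-plane $P_s\subset\LL_\Rb$, and the unit sphere in $P_s$ parametrizes an $\SO(P_s)$-orbit of $s$-compatible complex structures, each unit vector $\kappa\in P_s$ occurring as the K\"ahler class of the complex structure $J$ with period plane $\Pi_J=\kappa^\perp\cap P_s$ (orthogonal complement taken inside $P_s$). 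This is precisely the twistor family of \cite{HKLR}; for every member $J$ the underlying metric is $s$ and the self-dual plane is $P_s$.

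The common mechanism for (1) and (3) is the remark that a $(-2)$-vector $v\in\LL\cap\Pi^\perp$ with $\kappa\cdot v=0$ satisfies both $v\perp\Pi$ and $v\perp\kappa$, hence $v\perp P$ where $P=\Pi+\Rb\kappa$. For (1), I would pick any $s$-compatible $J$ with period plane $\Pi:=\Pi_J$ and K\"ahler class $\kappa\in P_s$; since $\kappa$ lies in the open K\"ahler chamber $K\subset C_\Pi$ we have $\kappa\cdot w\neq 0$ for every $(-2)$-vector $w\in\LL\cap\Pi^\perp$, so by the remark no $(-2)$-vector can be perpendicular to $P_s=\Pi+\Rb\kappa$ (equivalently, $\pm v$ would be an effective $(1,1)$-class, which pairs positively with the K\"ahler class $\kappa$). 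For (3), given $P$ not perpendicular to any $(-2)$-vector together with an orientation of $P$, I would choose a unit vector $\kappa\in P$ and set $\Pi:=\kappa^\perp\cap P$, orienting $\Pi$ so that $\kappa\in C_\Pi$. By the preceding remark there is no $(-2)$-vector $v\in\LL\cap\Pi^\perp$ with $\kappa\cdot v=0$, so $\kappa$ avoids all reflection hyperplanes and lies in a $\Pi$-chamber $K$; the strong Torelli theorem then furnishes a complex structure $J$ with $\Pi_J=\Pi$ and K\"ahler chamber $K\ni\kappa$, and Yau's theorem a Ricci-flat K\"ahler metric $s$ with K\"ahler class $\kappa$. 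After rescaling to unit volume, $P_s=\Pi+\Rb\kappa=P$.

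For (2), let $s,s'$ be unit-volume Ricci-flat metrics with $P_s=P_{s'}=:P$, and choose $\kappa,\Pi$ as in (3). By the twistor description $\kappa$ is simultaneously the K\"ahler class of an $s$-compatible $J$ and an $s'$-compatible $J'$ with $\Pi_J=\Pi_{J'}=\Pi$ and the same K\"ahler chamber $K$; the strong Torelli theorem yields $\phi\in\Tor(M)$ (acting trivially on $\LL$) carrying $J'$ to $J$, whereupon $\phi^*s'$ is Ricci-flat K\"ahler for $J$ with K\"ahler class $\kappa$, and Yau's uniqueness forces $\phi^*s'=s$. I expect the main obstacle to be the bookkeeping in the twistor step: verifying that a single metric $s$ really does produce the entire $\SO(P_s)$-family of compatible complex structures sharing the plane $P_s$, and that the complex structure produced by Torelli in (3), once equipped with its Yau metric, recovers exactly the prescribed $P$ with matching orientation and spinor orientation throughout. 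Parts (1) and (3) are then short, while (2) is where the two uniqueness statements --- Torelli for the complex structure and Yau for the metric within a fixed K\"ahler class --- must be meshed.
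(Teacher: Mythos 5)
Your proof is correct and follows essentially the route the paper intends: the paper itself does not prove Proposition~\ref{prop:Lo1} but defers to \cite{Lo}, and the surrounding discussion (the strong Torelli theorem for a pair $(\Pi,K)$, Yau's existence and uniqueness, and the twistor family of \cite{HKLR}) is exactly the set of ingredients you assemble, combined in the same way. The one input taken on faith in both treatments is that an arbitrary Ricci-flat metric on the K3 manifold is hyperk\"ahler (holonomy reduction to $\SU(2)$ via the Weitzenb\"ock argument), which is what makes the twistor description of $P_s$ available for a metric not given in advance as K\"ahler.
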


For a proof of Proposition~\ref{prop:Lo1}, see for example \cite{Lo}. The following is a corollary of the twistor construction.

\begin{corollary}\label{cor:isometrylift}
Let $s$ be a Ricci-flat metric on $M$ and let $P_s\subset \LL_{ \Rb}$ be the positive definite $3$-plane defined by its harmonic self-dual $2$-forms.
Then the isometry group of $(M,s)$ is finite and  its representation on $\LL$ identifies this  group with the $\Orth^+(\LL)$-stabilizer of $P_s$.
\end{corollary}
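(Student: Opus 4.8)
The plan is to study the natural representation $r\colon\Isom(M,s)\to\Orth(\LL)$ on $\LL\cong H^2(M;\Zb)$ (the restriction of the action inducing $\rho$) and to prove that it is injective with image precisely $\Orth^+(\LL)_{P_s}$. Finiteness of $\Isom(M,s)$ is then automatic: since $\Orth^+(\LL)$ acts properly discontinuously on $\Gr^+_3(\LL_\Rb)$, the point-stabilizer $\Orth^+(\LL)_{P_s}$ is finite, and an injection into a finite group has finite source.

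First I would check that $r$ lands in $\Orth^+(\LL)_{P_s}$. An isometry $\phi$ preserves the metric, hence the Hodge $\star$-operator on $2$-forms up to the orientation of $M$. Were $\phi$ orientation-reversing it would replace $\star$ by $-\star$ and so carry the self-dual space $P_s$ onto the anti-self-dual space $P_s^\perp$; but $P_s$ is positive definite of dimension $3$ while $P_s^\perp$ is negative definite, and no isometry of the signature-$(3,19)$ intersection form can map a positive-definite subspace into a negative-definite one. Hence every isometry is orientation-preserving, so by Donaldson's result its homological action lies in $\Orth^+(\LL)$, and since it now commutes with $\star$ it preserves the space $P_s$ of self-dual harmonic forms. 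Thus $r(\phi)\in\Orth^+(\LL)_{P_s}$.

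Both injectivity and surjectivity I would derive from the twistor picture recalled before Proposition~\ref{prop:Lo1}, together with the (strong) Torelli theorem. Recall that the unit vectors of $P_s$ parametrize a family of Ricci-flat K\"ahler structures all having $s$ as underlying metric; a member is encoded by an orthogonal splitting $P_s=\Pi\oplus\Rb\kappa$ into an oriented positive-definite $2$-plane $\Pi=\Pi_J$ and a K\"ahler direction $\Rb\kappa$. For injectivity, suppose $r(\phi)=\mathrm{id}$, so $\phi^*$ fixes every harmonic $2$-form. Fixing a hyperk\"ahler triple $(\omega_I,\omega_J,\omega_K)$ spanning $P_s$, the identity $\omega_I(X,Y)=s(IX,Y)$ together with $\phi^*\omega_I=\omega_I$ and $\phi^*s=s$ forces $\phi_*\circ I=I\circ\phi_*$, so $\phi$ is a holomorphic automorphism of $(M,I)$ acting trivially on $\LL$. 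Since $\Aut(M,I)$ acts faithfully on $\LL$, we conclude $\phi=\mathrm{id}$.

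The surjectivity step is the main obstacle. Let $g\in\Orth^+(\LL)_{P_s}$; since $g$ preserves the spinor orientation and stabilizes $P_s$, its restriction $g|_{P_s}$ lies in $\SO(P_s)$. Fix a twistor member $J$ with data $(\Pi_J,\kappa_J)$ and let $J'$ be the twistor member attached to the splitting $P_s=g\Pi_J\oplus\Rb\,g\kappa_J$, with $g\Pi_J$ carrying the pushed-forward orientation (legitimate because $g|_{P_s}$ preserves the orientation of $P_s$). By the twistor construction $s$ is again the Ricci-flat K\"ahler metric of $J'$, now with K\"ahler class $g\kappa_J$; checking that $g\kappa_J$ really is a K\"ahler class and not on a wall is the delicate point, and here I would invoke Proposition~\ref{prop:Lo1}(1), that $P_s$ is perpendicular to no $(-2)$-vector. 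Then $g^{-1}$ is an orientation-compatible Hodge isometry from the $J'$-structure to the $J$-structure sending $g\kappa_J$ to $\kappa_J$, so by strong Torelli there is a biholomorphism $f\colon(M,J)\to(M,J')$ with $f^*=g^{-1}$ on $\LL$. Since $f^*s$ is then a $J$-K\"ahler, Ricci-flat metric with K\"ahler class $f^*(g\kappa_J)=\kappa_J$, Yau's uniqueness forces $f^*s=s$, i.e.\ $f$ is an isometry of $(M,s)$ with $r(f)=g^{-1}$. As the image of $r$ is a subgroup, $g$ itself is realized, and the only care beyond this is the bookkeeping of orientations ensuring $g^{-1}$ is an orientation-compatible Hodge isometry.
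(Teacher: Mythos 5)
Your proposal is correct and rests on the same pillars as the paper's proof: the twistor family of complex structures attached to $s$ and the strong Torelli theorem. The one genuine difference is in how the lift of $g\in\Orth^+(\LL)_{P_s}$ is produced. The paper exploits the fact that $g$ acts on $P_s$ through $\SO(P_s)\cong\SO(3)$ and therefore fixes an axis: choosing $\kappa\in P_s^g\ssm\{0\}$ makes $g$ a stabilizer of the pair $(\Pi_\kappa,\kappa)$ for a single twistor member $J_\kappa$, so the lift comes directly from the isomorphism $\Aut(M,J_\kappa)\cong\Orth^+(\LL)_{\Pi_\kappa,K}$, with no further metric argument needed (the only residual care is showing the lift is independent of the component of $\kappa$ in $P_s^g\ssm\{0\}$, which the paper handles via discreteness of the isometry group). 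You instead take an arbitrary twistor member, invoke the relative form of Torelli to get a biholomorphism $f\colon(M,J)\to(M,J')$ between two different members, and then need Yau's uniqueness to upgrade $f$ to an isometry of $s$; this works, and your identification of the delicate point (that $g\kappa_J$ is a genuine K\"ahler class, guaranteed by Proposition~\ref{prop:Lo1}(1)) is exactly right, but it uses a slightly stronger form of Torelli than the stabilizer statement the paper sets up, at the cost of one extra appeal to Yau. Your injectivity argument via a fixed hyperk\"ahler triple is a clean equivalent of the paper's uniqueness appeal. One small imprecision: if $\phi$ were orientation-reversing, $\phi^*$ would be an anti-isometry of the intersection form rather than an isometry carrying $P_s$ into $P_s^\perp$; the cleaner statement (made in the paper's introduction) is that no anti-isometry of a form of signature $(3,19)$ exists since $-16\neq 16$, so every diffeomorphism of $M$ preserves orientation. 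This does not affect your conclusion.
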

\begin{proof}

Suppose $g\in \Orth^+(\LL)$ leaves $P_s$ invariant. Then $g$ must have finite order. It acts on $P_s$ as an element of $\SO(P_s)$ and so  $P_s^g\not= \{0\}$. For each $\kappa\in P_s^g\ssm \{0\}$ the natural orientation on $P_s$ induces a natural orientation on $\Pi_\kappa:=\kappa ^\perp\cap P_s$.  By the twistor construction,  $\Pi_\kappa$ defines a unique complex structure $J_\kappa$ on $M$ for which  $\Pi_{J_\kappa}=\Pi_\kappa$ as oriented planes and  for which $s$ is a K\"ahler metric.  

The Torelli theorem then asserts 
that $g$ has a unique representative $g_\kappa\in \Diff (M)$ that preserves both $J_\kappa$ and $s$ (or equivalently, $\kappa$ and $s$). Since the isometry group of $(M,s)$ is discrete,
$g_\kappa$ is independent of the connected component of $\kappa$ in $P_s^g\ssm \{0\}$.  If $P_s^g\ssm \{0\}$ is not connected then it is a punctured line
and $\kappa\mapsto-\kappa$ exchanges its components. This has the effect of changing the orientation of $\Pi_\kappa$ and hence replaces 
the complex structure $J_\kappa$ by its conjugate $\bar J_\kappa$. This will not affect $g_\kappa$ (for $g_\kappa$ also preserves $\bar J_\kappa$) and so there is a unique lift $\tilde g$ of $g$ to an isometry of $(M,s)$. Hence $g \mapsto \tilde g$ defines a group homomorphism that is a section of the
map $\Aut(M,s)\to \Orth^+(\LL)_{[P_s]}$. The uniqueness assertion implies that the latter is injective.
\end{proof}

\subsection{Completing the proofs of metric and complex Nielsen realization}

With the above in hand we can complete the proof of Theorem~\ref{thm:K3examples}.

\begin{proof}[Proof of Theorem~\ref{thm:K3examples}]
We prove the nontrivial directions only (in the mapping class version we assume that $G$ fixes a connected component of $\Teich(M)$). It then remains to prove that if $\Lbf_G$ contains no $(-2)$-vectors then $G$ is realized as a group of isometries of a Ricci-flat metric. The other properties already follow from our discussion above.

In the Grassmannian  of $G$-invariant $3$-planes $P\subset \Lbf_G^\perp\otimes\Rb$, those  
with the property that the rational hull of  $P$  is $\Lbf_G^\perp\otimes\Rb$ (or equivalently, for which $P^\perp\cap \LL=\Lbf_G$) are dense. Hence we can find a $G$-invariant,  positive-definite $3$-plane 
$P\subset \Lbf^\perp_G\otimes\Rb$ for which $P^\perp\cap \LL=\Lbf_G$. In that case $P^\perp$ contains no $(-2)$-vectors. By the differential-geometric version of the Torelli theorem, there exists then a Ricci-flat 
metric $s$ on $M$ which  in the mapping class version  can be taken to be in the component of $\Teich(M)$ fixed by $G$ such that $P$ is the space  defined by the self-dual harmonic $2$-forms. Corollary  \ref{cor:isometrylift} implies that $G$ then lifts to a group of isometries of $(M,s)$.
\end{proof}

If $G$ acts irreducibly on $P$, and if $P^\perp$ does not contain any $(-2)$-vector, then $G$ preserves a Ricci-flat  metric without necessarily preserving a complex structure on  $M$. This explains why we put the emphasis on the 
differential-geometric version of the Torelli theorem.  Theorem~\ref{theorem:example1}, which we now prove, gives an example that shows that this possibility actually occurs.

\begin{proof}[Proof of Theorem~\ref{theorem:example1}]
We think of the symmetric group $\Sc_4$ as the Weyl group of a root system $R$ of type $A_3$. It preserves the root lattice $\Abf_3$, a free abelian group of rank $3$.   $\Sc_4$ acts on the dual $\Abf_3^\vee$ by means of the contragredient representation. 
Since the  reflection representation of $\Sc_4$ is self-dual over $\Qb$, these two representations are equivalent when tensored with $\Qb$.
The direct sum $\Abf_3\oplus \Abf_3^\vee$ comes with a natural  quadratic form defined by $f(x, \xi):=\xi (x)$. This turns $\Abf_3\oplus \Abf_3^\vee$
into a lattice isomorphic to $\Ubf^{\oplus 3}$ and thus produces a representation of the alternating group  $\Ac_4$ on $\Orth(\Ubf^{\oplus 3})$.  

After tensoring with $\Rb$, this representation becomes a direct sum of two copies of the reflection representation, and so
$\Sc_4$  preserves a positive-definite $3$-plane  $P\subset\Ubf_{\Rb}^{\oplus 3}$. The subgroup $\Ac_4\subset \Sc_4$ will act on $P$ in an orientation-preserving manner. In order to extend this action to the $K3$ lattice 
$\Ubf^{\oplus 3}\oplus \Ebf_8(-1)^{\oplus 2}$,  note that there exist two perpendicular copies of the $A_3$ root system in an $E_8$ root system 
such that the associated embedding $\Abf_3\oplus\Abf_3\hookrightarrow \Ebf_8$ has the property that its orthogonal complement is spanned by two mutually perpendicular $(+4)$-vectors.
So if we let $\Ac_4$ act on each summand of $\Ebf_8(-1)^{\oplus 2}$ through this embedding, then the resulting action of $\Ac_4$ on the 
$K3$-lattice $\Ubf^{\oplus 3}\oplus \Ebf_8(-1)^{\oplus 2}\cong \LL$ has the property that $\Lbf_G$ is spanned by $4$ mutually perpendicular $(-4)$-vectors.
Theorem \ref{thm:K3examples} implies that  $G$ is realized by a group of isometries of $M$ relative to some Ricci-flat metric, but cannot fix a complex structure.
\end{proof}

\section{The fixed set of a diffeomorphism of prime order}

The goal of this section is to prove some results on the structure of the fixed-point set of  a finite subgroup $\Diff(M)$ of prime order $p$.  In the first two subsections we introduce some basic tools we need in order to do this.

Throughout this section, $N$ will be a connected, closed, oriented,  smooth $4$-manifold and $G\subset \Diff^+(N)$ will stand for a finite cyclic group of prime order $p\geq 2$ of orientation-preserving diffeomorphisms.  
Since $G$ is a finite group acting by diffeomorphisms, the fixed set $N^G$ is an embedded, closed, smooth submanifold of $N$.

\subsection{The Hirzebruch $G$-signature theorem}\label{subsect:HZsign}

The first tool we will need is the Hirzebruch $G$-signature theorem, which we now describe.  The assumptions are as given in the previous paragraph.  

Since the $G$-action on $N$ preserves orientation, 
each path component of $N^G$ has dimension $0$ or $2$.  At an isolated fixed point $z\in N^G$, the $G$-action on the tangent space $T_zN$
preserves a complex structure that is compatible with the given orientation, and is then given as a $2$-dimensional complex $G$-representation
by two nontrivial complex characters $\chi_1, \chi_2$, so  equivalent to the
complex representation 
\begin{equation}
\label{eq:localrep}
g\in G: (z_1,z_2)\mapsto (\chi_1(g)z_1,\chi_2(g)z_2).
\end{equation}
This is not unique: there is exactly one other complex structure with this property and that is the conjugate of this one for which the associated characters 
are of course $(\bar\chi_1,\bar\chi_2)$. Therefore  the {\em signature defect} ${\rm def}_z$ of the $G$-action at $z$  defined by 
\begin{equation}
\label{eq:defect1}
\Def_z:=\sum_{g\in G\ssm \{1\}}\frac{(1+\chi_1(g))(1+\chi_2(g))}{(1-\chi_1(g))(1-\chi_2(g))}.
\end{equation}
is intrinsic. Since $G$ is cyclic of prime order $p$,  $\chi_1$ and $\chi_2$ are generators of its group of complex characters and so $\chi_2=\chi_1^q$ for some $q\in \Fb_p^\times$. We can then also write this as   
\[
\Def_z=\sum_{\zeta\in \mu_p\ssm\{1\}}\frac{(1+\zeta)(1+\zeta^q)}{(1-\zeta)(1-\zeta^q)}\tag{$3.2'$}
\]
Note that each term in this sum is real, and is the norm squared of $(1+\zeta)/(1-\zeta)$ when $q=-1$.  For future reference we observe that the Cauchy-Schwarz inequality implies that
\[
2\frac{(1+\zeta)(1+\zeta^q)}{(1-\zeta)(1-\zeta^q)}\le \frac{(1+\zeta)(1+\zeta^{-1})}{(1-\zeta)(1-\zeta^{-1})}+
\frac{(1+\zeta^q)(1+\zeta^{-q})}{(1-\zeta^q)(1-\zeta^{-q})},
\]
with equality holding only when $(1+\zeta^q)/(1-\zeta^q)=(1+\zeta^{-1})/(1-\zeta^{-1})$, which simply means that $q=-1$. It follows that $\Def_z$ takes its maximal value for $q=-1$ and for  $q=-1$ only.

For each $2$-dimensional path component $C\subseteq N^G$, the {\em signature defect} of the $G$-action at $Y$ is defined by (cf.\ \cite{HZ}, p.178, (15)): 
\begin{equation}
\label{eq:defect2}
\Def_C:=\frac{\displaystyle (p^2-1)}{\displaystyle 3} C\cdot C,
\end{equation}
where $C\cdot C$ denotes the intersection product in $H_2(N;\Zb)$: it is by definition zero unless $C$ is orientable, in which case we choose a generator $[C]$ of the infinite cyclic group $H_2(C;\Zb)$ and then take $[C]\cdot [C]$. So when $p=2$, we get $\Def_z=0$ and $\Def_C=C\cdot C$.

Since $N/G$ is an oriented rational homology manifold, it has a well-defined signature $\sigma(N/G)$, but for our purpose we can just as well define it as the signature of the restriction of the intersection pairing to $H_2(N; \Rb)^G$. 
 
\begin{theorem}[{\bf Hirzebruch Signature Formula (see, e.g. (12) on page 177 of \cite{HZ})}]
\label{theorem:Hirzebruch}
Let $N$ be a closed, oriented $4$-manifold.  Suppose that $G$ is a finite subgroup of the group $\Diff^+(N)$ of orientation-preserving diffeomorphisms of $N$ of prime order $p$.  Then
\begin{equation}
\label{eq:gsig1}
\textstyle p\cdot\sigma(N/G)=\sigma(N)+\sum_z{\rm def}_z +\sum_C{\rm def}_C, 
\end{equation}
where $\sigma$ denotes the signature, the first sum ranges over the $0$-dimensional path components $z$ of $N^G$, and the second sum ranges over the $2$-dimensional path components $C$ of $N^G$.
\end{theorem}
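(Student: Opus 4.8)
The plan is to deduce this prime-order case from the Atiyah--Singer $G$-signature theorem by summing the equivariant signature over the whole group. For $g\in G$ write $\sigma(g,N)$ for the $g$-signature, i.e.\ $\operatorname{tr}(g^*\mid H^+)-\operatorname{tr}(g^*\mid H^-)$, where $H^2(N;\Rb)=H^+\oplus H^-$ is a $G$-invariant orthogonal splitting into a maximal positive- and a maximal negative-definite subspace for the intersection form (such a splitting exists because $G$ is finite, by averaging an inner product). First I would record the averaging identity $\sum_{g\in G}\sigma(g,N)=p\cdot\sigma(N/G)$: since $\frac{1}{|G|}\sum_g\operatorname{tr}(g^*\mid V)=\dim V^G$ for any real representation $V$, the left side equals $p(\dim(H^+)^G-\dim(H^-)^G)$, and $(H^\pm)^G$ are maximal definite subspaces of $H^2(N;\Rb)^G$, whose signature is by definition $\sigma(N/G)$. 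Separating the identity term, for which $\sigma(1,N)=\sigma(N)$, this already yields $p\cdot\sigma(N/G)=\sigma(N)+\sum_{g\neq 1}\sigma(g,N)$.

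The second step is to evaluate each $\sigma(g,N)$ with $g\neq 1$ by the $G$-signature theorem, whose right-hand side is a sum of local contributions indexed by the components of the fixed-point set $N^g$. Since $G$ has prime order and $g\neq 1$ generates $G$, we have $N^g=N^G$, so the same fixed components occur for every $g\neq 1$. For an isolated fixed point $z$ with local representation $(\chi_1,\chi_2)$ as in \eqref{eq:localrep}, the contribution is $\frac{(1+\chi_1(g))(1+\chi_2(g))}{(1-\chi_1(g))(1-\chi_2(g))}$ (each tangent eigenvalue $\lambda$ contributing the factor $(\lambda+1)/(\lambda-1)$, the two signs cancelling), and for a $2$-dimensional component $C$ with normal rotation angle $\theta_g$ the contribution is of the form $\csc^2(\theta_g/2)\,(C\cdot C)$, the self-intersection arising as the Euler number of the normal bundle.

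Summing these over $g\in G\ssm\{1\}$ then reproduces the two defect sums. The point contributions sum termwise to the definition \eqref{eq:defect1} of $\Def_z$, with nothing further to check. For the surface contributions I would use the trigonometric identity $\sum_{k=1}^{p-1}\csc^2(\pi k/p)=(p^2-1)/3$: writing the normal eigenvalue of a generator as a primitive $p$-th root of unity, the rotation angles of the various $g\neq 1$ run over $\{2\pi k/p:1\le k\le p-1\}$, so $\sum_{g\neq 1}\csc^2(\theta_g/2)(C\cdot C)=\frac{p^2-1}{3}(C\cdot C)=\Def_C$ as in \eqref{eq:defect2}. Substituting both sums into $p\cdot\sigma(N/G)=\sigma(N)+\sum_{g\neq 1}\sigma(g,N)$ gives \eqref{eq:gsig1}.

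I expect the main obstacle to be pinning down the precise local contributions of the $G$-signature theorem with correct signs and normalizations --- particularly the $2$-dimensional term, where one must correctly identify the coefficient $\csc^2(\theta_g/2)$ and match the normal Euler number with $C\cdot C$ --- together with verifying the cosecant sum $\sum_{k=1}^{p-1}\csc^2(\pi k/p)=(p^2-1)/3$. The remaining ingredients (existence of the invariant splitting, the identification $N^g=N^G$, and the averaging formula) are routine.
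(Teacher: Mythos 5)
The paper does not prove this statement at all---it is quoted as a known result, with the citation ``(12) on page 177 of \cite{HZ}'' serving as the proof---and your derivation (average the Atiyah--Singer $G$-signature theorem over $g\in G\setminus\{1\}$, add the ordinary signature for $g=1$, and identify $\sum_{g}\sigma(g,N)$ with $p\cdot\sigma(N/G)$ via the character-averaging formula) is precisely how the cited source obtains the formula. Your argument is correct: the invariant splitting and averaging step, the identification $N^g=N^G$ for all $g\neq 1$ since $p$ is prime, the point contributions summing termwise to the definition \eqref{eq:defect1} of $\Def_z$, and the identity $\sum_{k=1}^{p-1}\csc^2(\pi k/p)=(p^2-1)/3$ producing the coefficient in \eqref{eq:defect2} all check out, with only the routine verification of the Atiyah--Singer local terms left to the literature.
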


\subsection{$H_2(N;\Zb)$ as a $\Zb[G]$-module}
\label{subsection:h2}
When $N$ is simply-connected, a result of Edmonds gives more precise information 
about the $2$-dimensional components of $N^G$ in terms of the $G$-action on $H_2(N;\Zb)$. In order to state this we will need a few facts regarding $\Zb[G]$-modules. 

If $G$ is a finite cyclic group of prime order  $p$, let $\Ic_G\subset\Zb[G]$ denote the augmentation ideal and let $\check{\varepsilon}_G:=\sum_{g\in G} g\in\Zb[G]$ be the co-augmentation. Then $\Qb[G]=\Qb \Ic_G\oplus \Qb \check{\varepsilon}_G$ decomposes the algebra into a direct sum of the cyclotomic field
of degree $p-1$ (which we shall denote by $\Qb(G)$) and $\Qb$. 
Any finitely-generated $\Qb[G]$-module decomposes into a  trivial module 
and copies of the cyclotomic module $\Qb(G)$. 

There is an integral counterpart of this. First note that the image of $\Zb [G]$ in  $\Qb(G)=\Qb[G]/\Qb \check{\varepsilon}_G$ is 
 its ring of integers, denoted here by $\Zb (G)$. We shall refer to  $\Zb(G)$ as the \emph{basic integral cyclotomic module}. As recalled by Edmonds in \cite{Ed},  every finitely-generated  module $H$ over the group-ring 
$\Zb[G]$ which is free abelian over $\Zb$ decomposes into a trivial module and  \emph{indecomposable} summands that are either isomorphic to an ideal in $\Zb(G)$, 
called \emph{integral cyclotomic}  or of $\Zb$-rank $p$, called \emph{regular}. For the primes of interest here ($p=2,3,5,7$) the ideal class group of $\Qb(\mu_p)$ is trivial, so that then  every indecomposable module is principal; that is, is isomorphic to $\Zb$, 
$\Zb(G)$ or $\Zb[G]$, respectively. Beware however that a decomposition  into isogeny types need not be unique. 

\begin{example}\label{example:}
A  Coxeter transformation $c\in \Orth(\Ebf_8)$ is of order $30$ and has as its 
eigenvalues the eight primitive  $30$th roots of unity.
So for $p=2,3,5$, the transformation $c^{30/p}$ has prime order $p$ and does not have $1$ as an 
eigenvalue. This makes  $\Ebf_8$ a purely cyclotomic $\Zb(\mu_p)$-module; its rank as a  $\Zb(\mu_p)$-module is $8/(p-1)$, that is $8, 4$ and $2$, respectively.
\end{example}

The obvious ring homomorphism $\psi:\Zb[G]\to \Zb(G)\times \Zb$ satisfies 
$\psi(\check{\varepsilon}_G)=(0,p)$ and of course $\psi(1)=(1,1)$.   
Thus $\psi$ becomes a ring isomorphism if we invert $p$. In other words, a regular $\Zb[p^{-1}][G]$ module splits into a cyclotomic and a trivial module. Put differently, the distinction between a regular module and a sum of a cyclotomic module and a trivial module survives reduction modulo $p$. Indeed,  since a $p$th root of unity in an algebraic closure of $\Fb_p$ equals $1$,  tensoring  with $\Fb_p$ renders the $G$-action unipotent  and an indecomposable regular resp.\ cyclotomic module is then given by a single Jordan block  of size $p$ resp.\ $p-1$. Here is simple, but useful observation.

\begin{lemma}\label{lemma:nocyclotomic}
Let $H$ be a free abelian group of finite rank  and $G\subset \GL(H)$ a subgroup of prime order $p$. Assume that as a $\Zb[G]$-module, $H$ has only regular and trivial summands and that the ideal class group of $\Qb(\mu_p)$ is trivial. If  $g\in G$ is a generator, then the image of 
$g-1: H\to H$ is a direct summand of $H$ as a $\Zb$-module (and equal to the intersection $H^{\not=1}$ of  $H$ with the subspace of $H_\Cb$ where $g$ acts with eigenvalues $\not=1$).

If $H$ is endowed with a $G$-invariant unimodular form, then $H^{\not=1}=(H^G)^\perp$ and has discriminant lattice an $\Fb_p$-vector space of dimension equal to the number of regular summands in $H$.
\end{lemma}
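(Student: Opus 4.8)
The plan is to reduce everything to the explicit $\Zb[G]$-module structure of $H$ and then read off both assertions summand by summand, the hypotheses being used only to pin down that structure. Since the ideal class group of $\Qb(\mu_p)$ is trivial and $H$ has only regular and trivial summands, I would first invoke the structure theory recalled above to fix an isomorphism of $\Zb[G]$-modules $H\cong\Zb^{a}\oplus\Zb[G]^{b}$, where $a,b$ are the numbers of trivial and regular summands. (These numbers are well defined: reducing mod $p$, a trivial summand becomes a Jordan block of size $1$ and a regular summand a single block of size $p$, and the block sizes are invariants of $H\otimes\Fb_p$.) Writing $t$ for the chosen generator $g$, on a trivial summand $g-1$ acts as $0$, while on a regular summand $\Zb[G]=\Zb[t]/(t^{p}-1)$ the operator $g-1$ is multiplication by $t-1$.

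For the first assertion I would compute the image $(g-1)H$ on each summand. On a regular summand it is the augmentation ideal $\Ic_G=(t-1)\Zb[G]$, and the quotient $\Zb[G]/(t-1)\Zb[G]\cong\Zb[t]/(t-1)\cong\Zb$ is torsion-free; hence $(t-1)\Zb[G]$ is a $\Zb$-direct summand, and summing over all summands shows $(g-1)H$ is a $\Zb$-direct summand of $H$. To identify it with $H^{\neq 1}$ I would note that the eigenspace decomposition over $\Cb$ respects the module decomposition, so it suffices to treat a single regular summand. There the eigenvalue-$1$ part is cut out by $\check{\varepsilon}_G$, and $x$ lies in the span of the eigenspaces with eigenvalue $\neq 1$ exactly when $\check{\varepsilon}_G x=0$, i.e. when the augmentation of $x$ vanishes, i.e. when $x\in\Ic_G=(g-1)H$. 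This is precisely the point at which the hypotheses are indispensable: on a cyclotomic summand $\Zb(G)=\Zb[t]/(1+t+\cdots+t^{p-1})$ the operator $t-1$ is injective with image of index $p$ (its norm is $\pm p$), so $(g-1)H$ would be neither saturated nor equal to $H^{\neq 1}$. Excluding cyclotomic summands is exactly what forces $(g-1)H=H^{\neq 1}$.

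For the second assertion, assume a $G$-invariant unimodular form. I would first show $H^{G}$ and $H^{\neq 1}$ are orthogonal: for a $G$-invariant form the $\Cb$-bilinear extension pairs the eigenspace for $\zeta$ trivially with that for $\zeta'$ unless $\zeta\zeta'=1$, so the eigenvalue-$1$ space is orthogonal to every eigenspace with eigenvalue $\neq 1$, giving $H^{\neq 1}\subseteq(H^{G})^{\perp}$. Both of these lattices are saturated (an orthogonal complement always is, and $H^{\neq 1}=(g-1)H$ is a direct summand by the first part), and both have the same real span, namely the eigenvalue-$\neq 1$ subspace. Two saturated sublattices with the same rational span coincide, so $H^{\neq 1}=(H^{G})^{\perp}$.

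Finally, for the discriminant group I would use the standard fact that for a primitive sublattice $K$ of a unimodular lattice $H$ with orthogonal complement $M$, the form-induced map $H\to K^{\vee}$ is surjective (as $K$ is a direct summand and $H=H^{\vee}$) with kernel $M$, whence $H/M\cong K^{\vee}$ and therefore $H/(K\oplus M)\cong K^{\vee}/K\cong M^{\vee}/M$. Applying this with $K=H^{G}$ and $M=H^{\neq 1}$ identifies the discriminant group $(H^{\neq 1})^{\vee}/H^{\neq 1}$ with the finite group $H/(H^{G}\oplus H^{\neq 1})$. This quotient is $p$-torsion, since for any $h\in H$ one has $ph=\check{\varepsilon}_G h-(\check{\varepsilon}_G-p)h$ with $\check{\varepsilon}_G h\in H^{G}$ and $(\check{\varepsilon}_G-p)h\in\Ic_G H=(g-1)H=H^{\neq 1}$ (as $\check{\varepsilon}_G-p=\sum_{i}(g^{i}-1)\in\Ic_G$). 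Its dimension I would compute on the module decomposition: a trivial summand contributes nothing, while each regular summand contributes $\Zb[G]/(\check{\varepsilon}_G\Zb\oplus\Ic_G)\cong\Fb_p$, because the augmentation maps $\check{\varepsilon}_G\Zb+\Ic_G$ onto $p\Zb$. Hence $(H^{\neq 1})^{\vee}/H^{\neq 1}\cong\Fb_p^{\,b}$, an $\Fb_p$-vector space of dimension equal to the number $b$ of regular summands. The main obstacle throughout is the identification $(g-1)H=H^{\neq 1}$ together with its saturation, which genuinely requires the absence of cyclotomic summands; once that is established, the remaining steps are bookkeeping with the module decomposition and the standard unimodular-complement computation.
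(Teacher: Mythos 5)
Your proposal is correct and follows essentially the same route as the paper: reduce to the explicit decomposition $H\cong\Zb[G]^{\nu}\oplus\Zb^{\mu}$, check the first claim on the summand $\Zb[G]$ via the augmentation ideal, and then obtain the discriminant group from unimodularity, transferring between $H^{G}$ and $H^{\neq 1}$ as orthogonal complements. The only (harmless) variation is that the paper computes $(H^{G})^{\vee}/H^{G}$ directly as the image of the averaging projector $\frac{1}{p}\check{\varepsilon}_G$, whereas you compute the isomorphic quotient $H/(H^{G}\oplus H^{\neq 1})$ summand by summand; you also supply the saturation/eigenspace argument for $H^{\neq 1}=(H^{G})^{\perp}$ that the paper leaves implicit.
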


\begin{proof}
Our assumptions imply that $H$ is as a $\Zb[G]$-module a direct sum of copies of  $\Zb[G]$ and trivial $\Zb[G]$-modules, and so it suffices to check this
for the case $H=\Zb[G]$.  But then the image of $g-1$ is the augmentation ideal $\Ibf_G\subset \Zb[G]$ and is indeed a direct summand.

Now assume that $H$ comes with $G$-invariant unimodular form and choose a $\Z[G]$-linear isomorphism $H=\Zb[G]^\nu \oplus \Zb^\mu$. 
The projector $\frac{1}{p}\check{\varepsilon}_G$ defines 
orthogonal projection onto $H^G_\Qb$. It is clear that the image of this projector is $\la \frac{1}{p}\check{\varepsilon}_G\ra^{\oplus\nu}\oplus  \Zb^\mu$. Since $H$ is unimodular, the image of this projection can be identified with $(H^G)^\vee$. It follows  that $(H^G)^\vee/H^G$ is a $\Fb_p$-vector space of dimension $\nu$. Since $H^{\not=1}$ is the orthogonal complement of $H^\perp$ in $H$ its discriminant group is canonically isomorphic to that of $H^\perp$.
\end{proof}

We can now state the following. 

\begin{proposition}[Edmonds, (Prop.\ (2.4) of \cite{Ed})]\label{prop:ed}
Let $N$ be a closed, oriented, simply-connected $4$-manifold and $G\subset \Diff^+(N)$ a finite cyclic subgroup of prime order $p$.
Let $t,c$ and $r$ denote the number of trivial, cyclic and regular summands of $H^2(N;\Zb)$ as a $\Zb[G]$-module.   Then 
\[b_0(N^G; \Fb_p)+b_2(N^G; \Fb_p)=t+2\]
and
\[b_1(N^G; \Fb_p)=c,\]
where  $b_i(N^G; \Fb_p)$ stands for the $i$th Betti number with $\Fb_p$-coefficients, $\dim_{\Fb_p} H^i(N^G; \Fb_p)$. 
\end{proposition}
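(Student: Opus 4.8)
The plan is to pin down the three numbers $b_0(N^G;\Fb_p)$, $b_1(N^G;\Fb_p)$, $b_2(N^G;\Fb_p)$ by producing two independent linear relations among them and solving. Throughout write $G=\la g\ra$, so that $N^G=\Fix(g)$, and recall that since $N$ is simply connected its integral cohomology is free and concentrated in the even degrees $0,2,4$, with $H^0$ and $H^4$ trivial $\Zb[G]$-modules (the latter because $g$ preserves orientation). Reducing the integral decomposition of $H^2(N;\Zb)$ modulo $p$ as explained just before Lemma~\ref{lemma:nocyclotomic}, the $\Fb_p[G]$-module $H^\ast(N;\Fb_p)$ is a sum of $t+2$ trivial (size $1$) blocks, $c$ cyclotomic (size $p-1$) blocks, and $r$ regular (free, size $p$) blocks.

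For the first relation I would compute $\chi(N^G)$ via the Lefschetz fixed point theorem for the finite-order diffeomorphism $g$, which for a smooth finite group action gives $\chi(N^G)=\sum_i(-1)^i\tr\!\big(g^\ast\mid H^i(N;\Qb)\big)$. Over $\Qb$ a trivial summand contributes $+1$ to $\tr(g\mid H^2)$, a cyclotomic summand contributes the sum of the primitive $p$-th roots of unity, namely $-1$, and a regular summand contributes the sum of all $p$-th roots of unity, namely $0$; together with the two trivial classes in $H^0$ and $H^4$ this gives $\chi(N^G)=2+(t-c)$. Since the Euler characteristic is independent of the coefficient field, this yields the first relation
\[
b_0(N^G;\Fb_p)-b_1(N^G;\Fb_p)+b_2(N^G;\Fb_p)=t-c+2.
\]

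For the second relation I would invoke Smith theory in the form of Borel localization. The $G$-action on $N\ssm N^G$ is free, so the Tate equivariant cohomology vanishes off the fixed set and $\hat H^\ast_G(N;\Fb_p)\cong H^\ast(N^G;\Fb_p)\otimes\hat H^\ast(G;\Fb_p)$. Comparing this with the spectral sequence $\hat H^s\!\big(G;H^t(N;\Fb_p)\big)\Rightarrow\hat H^{s+t}_G(N;\Fb_p)$—in which each trivial and each cyclotomic summand contributes a one-dimensional group in every filtration degree, while each regular (free) summand contributes nothing—would give, provided the spectral sequence degenerates, the second relation
\[
b_0(N^G;\Fb_p)+b_1(N^G;\Fb_p)+b_2(N^G;\Fb_p)=t+c+2.
\]
Adding and subtracting the two displayed relations then yields $b_0+b_2=t+2$ and $b_1=c$.

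The main obstacle is exactly the degeneration needed for the second relation: Smith theory only gives the inequality $b_0+b_1+b_2\le t+c+2$ for free, the reverse being the assertion that the action has no Smith defect. Unlike the torus case, the even-degree concentration of $H^\ast(N;\Fb_p)$ does not by itself force degeneration, since $\hat H^\ast(G;\Fb_p)$ has classes in odd degree and the potentially nonzero differentials run from rows $4$ and $2$ into lower even rows. This is where I expect the closed, oriented hypothesis to enter decisively: equivariant Poincaré duality for the oriented $G$-manifold $N$ makes the spectral sequence self-dual, and combined with the fact that $H^2$ generates $H^4$ under cup product (the intersection form being unimodular, hence nondegenerate mod $p$) I would try to force all of these differentials to vanish. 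Carrying this out—tracking the Koszul signs in the duality pairing carefully enough to kill the top-to-bottom differential, and confirming that a genuine fixed point is present so that the bottom row survives—is the step I expect to demand the most care.
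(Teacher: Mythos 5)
The paper offers no proof of this proposition --- it is imported verbatim from Edmonds with a citation --- so the only available comparison is with Edmonds's own argument, whose overall shape your proposal correctly reproduces: one linear relation among the $b_i(N^G;\Fb_p)$ from the Lefschetz fixed point theorem, a second from equality in Smith theory, then solve. Your first relation is complete and correct, and your identification of the $E_2$-page of the Tate spectral sequence (trivial and cyclotomic blocks each contributing one dimension per degree, regular blocks contributing nothing) is also right.

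The genuine gap is the one you flag yourself: the degeneration at $E_2$, which is the entire nontrivial content of the proposition, is left unproved, and the first of the two mechanisms you propose for closing it fails. You want $E_2^{0,4}$ to be generated by cup products of permanent cocycles in $E_2^{0,2}=H^2(N;\Fb_p)^G$, citing unimodularity of the intersection form; but the mod-$p$ form restricted to the $G$-invariants of $H^2(N;\Fb_p)$ can vanish identically. For instance, if $H^2(N;\Zb)\cong\Zb[G]^{\oplus r}$ with a $G$-invariant unimodular pairing, those invariants are spanned by the norm elements $\check{\varepsilon}=\sum_{g\in G}g$ in each factor, and $\check{\varepsilon}\cdot\check{\varepsilon}\equiv 0\pmod{p}$ by $G$-invariance, so such products die in $H^4(N;\Fb_p)$. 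What does work is a sharpened form of your second suggestion: for $x\in N^G$ the splitting $H^*(BG)\to H^*_G(N)\to H^*_G(x)=H^*(BG)$ forces the bottom row to survive to $E_\infty$, killing $d_3\colon E^{s,2}\to E^{s+3,0}$ and $d_5\colon E^{s,4}\to E^{s+5,0}$, while the equivariant Gysin class $i_!(1)\in H^4_G(N)$ of the fixed point restricts on the fibre to the generator of $H^4(N;\Fb_p)$, so the edge map onto $E_2^{0,4}$ is surjective and the $H^*(BG;\Fb_p)$-module structure kills the remaining $d_3\colon E^{s,4}\to E^{s+3,2}$. Note that both halves of this argument --- and the statement itself --- require $N^G\neq\emptyset$, which cannot be ``confirmed'' in general: the diagonal antipodal involution on $S^2\times S^2$ is a free, smooth, orientation-preserving involution of a closed simply connected $4$-manifold with $t=0$, $c=2$, for which both displayed formulas fail outright. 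Nonemptiness of the fixed set is thus a hypothesis that must be added or verified separately in each application (it does hold wherever the present paper invokes the proposition).
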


 \subsection{Cyclotomic summands and topology of the fixed set}
In this subsection we concentrate on  the case when $G$ is a cyclic subgroup of order $p$ of $\Diff (M)$, with $p$ prime. We denote a generator of $G$ by $f$.  

Recall that a {\em spin structure} on a $4$-manifold $N$ is a lift of the structure group $\SO(4)$ of the tangent bundle $TN$ to its universal ($2$-sheeted)
cover $\widetilde{\SO}(4)$ (we will review this notion in more detail in \S~\ref{subsect:resolution}).

\begin{lemma}[{\bf Basic properties of \boldmath$M^G$}]
\label{lemma:basics1}
Let $G\subset \Diff(M)$ be a finite cyclic subgroup.  Then:
\begin{enumerate}
\item $G$ is orientation preserving. 
\item $M$ is a spin manifold and $G$ preserves a spin structure on $M$.
\item $M^G$ is a finite union of points and closed, orientable $2$-manifolds.
\end{enumerate}
\end{lemma}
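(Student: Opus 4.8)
The plan is to dispatch (1) and (2) quickly from global properties of $M$, and then obtain (3) from a local analysis of the isotropy representation, supplemented in one case by the invariant spin structure of (2). For (1), an orientation-reversing diffeomorphism would send the intersection form to its negative, hence $\sigma(M)$ to $-\sigma(M)$; since $\sigma(M)=-16\neq 0$ this is impossible, so every element of $\Diff(M)$—in particular of $G$—preserves orientation. For (2), the isomorphism $\LL\cong\Ubf^{\oplus3}\oplus\Ebf_8(-1)^{\oplus2}$ shows the intersection form of the simply-connected manifold $M$ is even, whence $w_2(M)=0$ and $M$ is spin; and since $H^1(M;\Zb/2)=0$ the spin structure is unique (spin structures being a torsor over $H^1(M;\Zb/2)$), so any diffeomorphism, having to carry a spin structure to a spin structure, must fix it. Thus $G$ preserves the spin structure.

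For (3), since $G=\langle f\rangle$ is cyclic we have $M^G=M^f$. Averaging a metric makes $G$ a group of isometries, so $M^G$ is a closed, totally geodesic smooth submanifold, with finitely many components as a closed subset of the compact $M$. At a fixed point $z$, the isotropy representation on $T_zM\cong\Rb^4$ is orthogonal of finite order, hence splits into the $(+1)$-eigenspace $T_z(M^G)$, a $(-1)$-eigenspace, and $2$-plane rotation blocks. Orientation preservation forces $\det=+1$, so the $(-1)$-eigenspace is even-dimensional and every component of $M^G$ has even dimension. A $4$-dimensional component would be open and closed in the connected $M$, forcing $f=\mathrm{id}$; so for $G$ nontrivial only dimensions $0$ and $2$ occur.

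It then remains to orient the $2$-dimensional components $C$. The normal bundle $\nu$ carries the fiberwise $G$-action, which is orientation-preserving on fibers (the same determinant computation) and has no nonzero fixed vectors. If $f$ acts on $\nu$ without real eigenvalues, i.e.\ by rotation through an angle $\neq\pi$, it induces a complex structure on $\nu$ (multiplication by $e^{i\theta}$ with $\theta\in(0,\pi)$), which orients $\nu$; since $TM|_C$ and $\nu$ are then oriented, so is $TC$, and $C$ is orientable. The only other possibility is that $f$ acts as $-\mathrm{id}$ on $\nu$; then $df^2_z=\mathrm{id}$, so by Bochner linearization $f^2=\mathrm{id}$ near $z$ and hence globally (open–closed in connected $M$), forcing $G$ to be an involution. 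For this case I would use the invariant spin structure: writing $\tau=f$, the obstruction to orienting $\nu$ is $w_1(\nu)$, and $w_1(TM|_C)=0$ gives $w_1(TC)=w_1(\nu)$, so orientability of $C$ is equivalent to that of $\nu$. The $\tau$-invariant spin lift $\tilde\tau$ to the $\Spin(4)$-bundle covers the normal action $-\mathrm{id}\in\SO(2)$, which lifts only to the order-$4$ elements of $\Spin(2)$; tracking how $\tilde\tau^2$ (the identity or the deck involution) behaves along loops in $C$ shows it is governed by $w_1(\nu)$, and the global consistency of the unique $\tau$-invariant spin structure forces $w_1(\nu)=0$.

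The main obstacle is precisely this last step: the elementary rotation argument orients $\nu$ in every case except the $-\mathrm{id}$ normal action, and there orientability genuinely relies on the spin hypothesis. I expect the remaining work to be the careful bookkeeping of the spin lift along $C$ (or, alternatively, invoking the known orientability of the fixed surfaces of a spin-preserving involution); the rest of the lemma is formal.
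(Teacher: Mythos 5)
Your proposal is correct and follows essentially the same route as the paper: (1) from $\sigma(M)=-16$, (2) from the even unimodular intersection form and simple connectivity, and (3) by averaging a metric and analyzing the normal $\SO(2)$-action, with the rotation-by-$\theta\neq\pi$ case giving a complex structure on the normal bundle. The one step you flag as incomplete --- orientability of the fixed surfaces when the normal action is $-\mathrm{id}$, forcing $G$ to be an involution --- is handled in the paper exactly by your stated fallback, namely citing the known result (Proposition 3.2 of Edmonds) that a spin- and orientation-preserving involution has orientable fixed surfaces, rather than by redoing the spin-lift holonomy bookkeeping.
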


\begin{proof}
The first property was already noted in the introduction and appears here for the sake of record only.

Since $M$ is a K3 manifold, $\pi_1(M)=0$ and $\omega_2(M)=0$.  It follows that $M$ has a unique spin structure, so that any diffeomorphism must preserve this spin structure, proving (2).

By averaging any smooth Riemannian metric under the $G$ action, $G$ preserves some Riemannian metric on $M$. It follows that $M^G$ is a 
(possibly disconnected) closed manifold.  Since $G$ is orientation-preserving, it acts on the normal bundle of any component $C$ of $M^G$ as  special orthogonal transformation without eigenvalue $1$. This implies that such a component has even dimension (it is a finite union of points and closed $2$-manifolds). This argument also shows that if $C$ is a surface component and $G$ has odd order, then this normal bundle must be orientable (so that $C$ is orientable). If $p=2$ then Proposition 3.2 of \cite{Ed} implies, since $G$ is orientation-preserving and spin-preserving, that $M^G$ is orientable.(\footnote{In this and in other results quoted from \cite{Ed}, the hypothesis is actually that the $G$-action is locally linear.  This is satisfied for any diffeomorphism.}) This proves (3).
\end{proof}

Denote by $\nu$ the multiplicity of the cyclotomic character of the $G$-action on $H_2(M;\Qb)$.  Note that the cyclotomic character has degree $p-1$, 
which implies that 
\[\nu (p-1)\le 22-3=19.\]

\begin{proposition}[{\bf Topology of $M^G$}]
\label{prop:fixtop1}
Let $G$ be cyclic subgroup of prime order $p$ of the  diffeomorphism group of the K3 manifold $M$ which 
pointwise fixes a positive $3$-plane in $H_2(M; \Rb)$. Denote by
$\nu$ the multiplicity of the rational cyclotomic character of the $G$-action on $H_2(M;\Qb)$.  Then the  following hold:
\begin{enumerate}
\item The Euler characteristic of $M^G$ is
\begin{equation}\label{eqn:eulerchar}
\chi(M^G)=24-\nu p.
\end{equation}
\item The signature of $M/G$ is  \[\sigma(M/G)=\sigma(H_2(M;\Rb)^G)=\nu(p-1)-16.\]
\item The signature defect of the $G$-action on $M$ is 
\begin{equation}\label{eqn:signdefect}
\textstyle \sum_z{\rm def}_z +\sum_C{\rm def}_C=(p-1)(\nu p-16).
\end{equation} 
\end{enumerate}
\end{proposition}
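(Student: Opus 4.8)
The plan is to prove the three assertions in order, doing (2) first since its signature computation feeds directly into (3), and obtaining (1) independently from the Lefschetz fixed point theorem. The common starting point is the isotypic decomposition of the rational $G$-module $H_2(M;\Qb)$ into its trivial part $H_2(M;\Qb)^G$ and its cyclotomic part, the latter a sum of $\nu$ copies of the cyclotomic module $\Qb(\mu_p)$ and hence of $\Qb$-dimension $\nu(p-1)$. Because the intersection form is $G$-invariant and nondegenerate, distinct isotypic components are mutually orthogonal: if $v$ is $G$-fixed and $w$ lies in the cyclotomic part, then $v\cdot w=v\cdot gw$ for all $g$ by $G$-invariance, so $v\cdot w=v\cdot\bigl(\tfrac1p\check{\varepsilon}_G w\bigr)=0$, the projection $\tfrac1p\check{\varepsilon}_G w$ onto the fixed part being zero. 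Thus over $\Rb$ the two summands are orthogonal and the form restricts nondegenerately to each.

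For (2), I would observe that by hypothesis the $G$-fixed positive-definite $3$-plane lies in $H_2(M;\Rb)^G$; since the total form has signature type $(3,19)$ and all three positive directions already occur in the fixed part, its orthogonal complement—the cyclotomic part—must be negative definite, contributing $(0,\nu(p-1))$ to the signature. Hence $H_2(M;\Rb)^G$ has signature type $(3,\,19-\nu(p-1))$, so $\sigma(H_2(M;\Rb)^G)=3-(19-\nu(p-1))=\nu(p-1)-16$. The equality $\sigma(M/G)=\sigma(H_2(M;\Rb)^G)$ is the definition of $\sigma(M/G)$ recorded just before Theorem~\ref{theorem:Hirzebruch}.

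For (1), let $f$ be a generator of $G$, so that $M^G=M^f$. Because $f$ has finite order its fixed set is a submanifold (Lemma~\ref{lemma:basics1}) on whose normal bundle $f$ acts without eigenvalue $1$, and the Lefschetz fixed point theorem gives $\chi(M^G)=L(f)=\sum_i(-1)^i\tr\bigl(f^*\mid H^i(M;\Qb)\bigr)$. Using $H^1=H^3=0$, that $f^*$ acts trivially on $H^0$ and (being orientation-preserving) on $H^4$, and that on $H^2$ the trivial part of dimension $22-\nu(p-1)$ contributes its dimension while each of the $\nu$ cyclotomic summands contributes the sum of the primitive $p$-th roots of unity, namely $-1$, I obtain $\tr(f^*\mid H^2)=22-\nu(p-1)-\nu=22-\nu p$, and therefore $\chi(M^G)=1+(22-\nu p)+1=24-\nu p$.

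Finally, (3) is pure bookkeeping: substituting $\sigma(M)=-16$ and the value $\sigma(M/G)=\nu(p-1)-16$ from (2) into the Hirzebruch $G$-signature formula $p\cdot\sigma(M/G)=\sigma(M)+\sum_z\Def_z+\sum_C\Def_C$ of Theorem~\ref{theorem:Hirzebruch} yields $\sum_z\Def_z+\sum_C\Def_C=p\bigl(\nu(p-1)-16\bigr)+16=(p-1)(\nu p-16)$. The only genuinely substantive input is the negative-definiteness of the cyclotomic part in step (2), which is precisely where the hypothesis of a pointwise-fixed positive $3$-plane is used; everything else is the Lefschetz and Hirzebruch machinery already in place. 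I expect the main obstacle to be a matter of care rather than difficulty—namely checking that the isotypic decomposition is honestly orthogonal over $\Rb$ and that the Lefschetz number equals $\chi(M^G)$ for this finite-order, possibly higher-dimensional fixed set.
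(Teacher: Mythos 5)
Your proposal is correct and follows essentially the same route as the paper: the Lefschetz fixed point theorem with the trace $-1$ on each cyclotomic summand for (1), the signature count $(3,19-\nu(p-1))$ on $H_2(M;\Rb)^G$ for (2), and direct substitution into the Hirzebruch $G$-signature formula for (3). The extra care you take in verifying the orthogonality of the isotypic decomposition is implicit in the paper's argument but is exactly the right point to check.
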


\begin{proof}
Since $H_2(M;\Rb)\cong\Rb^{22}$ decomposes into $\nu$ cyclotomic summands 
(each of which has dimension $p-1$) and $22-(p-1)\nu$ trivial summands, and since the character of each cyclotomic summand equals $-1$, it follows that the Lefschetz number of a generator $f$ of $G$ is 
\[L(f)=1+1\cdot (22-(p-1)\nu)+(-1)\cdot \nu+1=24-\nu p.\]
But  $\chi(M^G)=\chi(M^f)=L(f)$, the first equality following since $f$ generates $G$; the second following from the Lefschetz Fixed-Point Theorem.  This proves (1).

As noted in \S~\ref{subsection:h2},  $H_2(M;\Qb)$ decomposes as a direct sum of $1$-dimensional trivial and $(p-1)$-dimensional cyclotomic $G$-representations.  By definition there are $\nu$ and so $22-\nu(p-1)$ of the former.  Since we assumed that $H_2(M;\Rb)^G$ contains positive-definite $3$-plane, it follows that the intersection form on $H_2(M;\Rb)$ restricts to a bilinear form on $H_2(M;\Rb)^G$ of type 
$(3,19-\nu(p-1))$.  Thus $\sigma(H_2(M;\Rb)^G)=\nu(p-1)-16$, proving (2).   

Given this, the $G$-signature Formula (Theorem~\ref{theorem:Hirzebruch}) implies 
\[
\textstyle \sum_z{\rm def}_z +\sum_C{\rm def}_C=p (-16+\nu(p-1))+16=(p-1)(\nu p-16),
\]
proving (3).
\end{proof}

\subsection{Smooth involutions have finite fixed set}
In this subsection we focus on the case $p=2$.  \emph{Throughout this section we also assume that $G$ fixes pointwise a 
positive-definite $3$-plane in $\LL$.}  This implies that $\Lbf_G$ is the orthogonal complement of $H^2(M)^G$ in $H^2(M)$ and that $\Lbf_G$ is negative definite.

The first part of the following proposition is due to Bryan (\cite{Bry}, Section 4.2) and was obtained by him in a somewhat different manner. It has in common with our proof that it is based on Seiberg-Witten theory.

\begin{proposition}\label{proposition:involution}
Let $G\subset\Diff(M)$ be a subgroup of order $2$.  Then the fixed set $M^G$ consists of $8$  points and the  $\Zb[G]$-module $H_2(M)$ has $8$ regular summands, $6$ trivial summands and no cyclotomic summands.
\end{proposition}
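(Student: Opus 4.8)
The plan is to reduce the whole statement to the single smooth-topological fact that $M^G$ has \emph{no} $2$-dimensional component, and then to extract both the count of fixed points and the $\Zb[G]$-module structure of $H_2(M)$ from the numerology already assembled in Proposition~\ref{prop:fixtop1} together with Edmonds' Proposition~\ref{prop:ed}. Throughout I identify $H_2(M)$ with $H^2(M)$ as $\Zb[G]$-modules via Poincar\'e duality (legitimate since $G$ preserves the orientation), and I write $t,c,r$ for the numbers of trivial, cyclotomic and regular summands of this module, so that $t+c+2r=22$, while the multiplicity of the sign (for $p=2$, cyclotomic) character on $H_2(M;\Qb)$ is $\nu=c+r$ and the multiplicity of the trivial character is $t+r=22-\nu$.

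The hard step, which I would carry out first since everything rests on it, is to prove that $M^G$ is finite, i.e.\ carries no orientable surface component (Lemma~\ref{lemma:basics1} already guarantees that the only alternative to isolated points is a disjoint union of closed \emph{orientable} surfaces). I expect this to be the main obstacle, and it is precisely the place where genuine gauge theory is needed: as I note in the bookkeeping below, the Euler-characteristic identity of Proposition~\ref{prop:fixtop1}(1) and the Edmonds identities of Proposition~\ref{prop:ed} turn out to be equivalent to one another, so no amount of lattice bookkeeping can exclude surfaces. Following Bryan~\cite{Bry}, I would argue by contradiction using Seiberg--Witten theory. Recall that $M$ carries its unique spin structure, preserved by $f$ (Lemma~\ref{lemma:basics1}(2)); the associated $\mathrm{spin}^c$-structure with $c_1=0$ is the unique basic class of a $K3$, and $b^+(M)=3$. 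The hypothesis that $G$ fixes a positive-definite $3$-plane \emph{pointwise} says exactly that $f$ acts as $+1$ on a maximal positive-definite subspace of $H^2(M;\Rb)$, so that $b^+$ is entirely of $(+1)$-type; this is what makes the relevant $\mathrm{Pin}(2)$-equivariant (Bauer--Furuta type) refinement of the Seiberg--Witten invariant usable. Localizing the $G$-index of the spin Dirac operator at $M^G$, a surface component $C$ (on whose normal bundle $f$ acts as $-1$) would contribute a term governed by its genus and by $C\cdot C$, and an equivariant $\tfrac{10}{8}$-type inequality then becomes incompatible with the presence of any such $C$. This is the step that separates symplectic involutions (isolated fixed points) from anti-symplectic ones, which \emph{do} fix curves and which, tellingly, fail to fix a positive $3$-plane.

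Granting that $M^G$ is finite, the rest is bookkeeping. For $p=2$ every isolated fixed point has vanishing signature defect $\Def_z=0$, so the total signature defect is $0$; comparing with Proposition~\ref{prop:fixtop1}(3), which gives $\sum_z\Def_z+\sum_C\Def_C=(p-1)(\nu p-16)=2\nu-16$, forces $\nu=8$. Then Proposition~\ref{prop:fixtop1}(1) gives $\chi(M^G)=24-2\nu=8$, so $M^G$ consists of exactly $8$ points. Now apply Edmonds' Proposition~\ref{prop:ed}: since $M^G$ is a set of $8$ points we have $b_1(M^G;\Fb_2)=0$ and $b_0(M^G;\Fb_2)+b_2(M^G;\Fb_2)=8$, whence $c=0$ and $t=8-2=6$; and $r=\nu-c=8$. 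As a consistency check, $t+c+2r=6+0+16=22=\rank H_2(M)$. Thus $H_2(M)$ has $8$ regular, $6$ trivial and no cyclotomic summands, as claimed. (It is worth noting that these are exactly the invariants of the Nikulin involution that swaps the two $\Ebf_8(-1)$ summands of the $K3$ lattice, which is the bridge to Theorem~\ref{theorem:nikulin:type}.)
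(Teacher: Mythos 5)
Your overall architecture is the right one and matches the paper's: everything reduces to showing $M^G$ is finite, and once that is known the bookkeeping you give (for $p=2$ every isolated point has $\Def_z=0$, so Proposition~\ref{prop:fixtop1}(3) forces $\nu=8$, Proposition~\ref{prop:fixtop1}(1) gives $8$ points, and Edmonds' Proposition~\ref{prop:ed} then yields $c=0$, $t=6$, $r=8$) is exactly what the paper does. Your observation that the Lefschetz identity carries no information beyond Edmonds' identities, so that some gauge-theoretic input is unavoidable, is also correct.

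The genuine gap is that the one hard step --- excluding $2$-dimensional components of $M^G$ --- is not actually proved. You replace it with a plan: an equivariant Bauer--Furuta / $\mathrm{Pin}(2)$ refinement, localization of the $G$-index of the Dirac operator at $M^G$, and an unspecified ``$\tfrac{10}{8}$-type inequality'' that is asserted, without any statement or computation, to be ``incompatible with the presence of any such $C$.'' As written this is a citation to Bryan, not an argument; nothing in your sketch explains what quantitative constraint on the genus and self-intersection of a fixed surface would emerge, nor why it would conflict with the other numerology. The paper's route is both complete and lighter: (i) Atiyah--Bott (Prop.\ 8.46) shows all components of $M^G$ have the same dimension, so one may assume $M^G=C_1\cup\dots\cup C_m$ consists purely of surfaces (this purity is genuinely needed --- with a mix of points and surfaces the identity below only gives a lower bound on the number of points, not a contradiction); (ii) Edmonds' Corollary 2.6 plus orientability shows each $C_i$ carries a \emph{nonzero integral} homology class; (iii) adding Proposition~\ref{prop:fixtop1}(1) and (3) gives $\sum_i\bigl(\chi(C_i)+C_i\cdot C_i\bigr)=8$; and (iv) each term is $\le 0$, by evenness of the intersection form when $C_i\cdot C_i<0$ and by the Kronheimer--Mrowka adjunction inequality when $C_i\cdot C_i\ge 0$ --- a contradiction. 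So the Seiberg--Witten input is the already-packaged adjunction inequality applied to each component, not an equivariant refinement of the invariant. To make your proof complete you would either have to actually carry out the Bauer--Furuta localization argument (essentially reproving Bryan's Section 4.2) or substitute the paper's steps (i)--(iv).
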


\begin{proof}
Let  $P\subset H^2(M; \Rb)$ a positive-definite $3$-plane  pointwise fixed by $G$. By Lemma~\ref{lemma:basics1}, $M^{G}$ is a finite union of points and closed surfaces.  That lemma also tells us that the $G$-action preserves a spin structure on $M$.  A result of Atiyah-Bott (\cite{AB}, Proposition 8.46) states that then all components of $M^{G}$ have the same 
dimension: $0$ or $2$. We first exclude the last case. Suppose otherwise  and  assume that $M^{G}$ is a finite union $C_1 \cup \cdots \cup C_m$ of closed, connected surfaces, $m\geq 1$.

 Since all components of $M^{G}$ have the same dimension, Corollary 2.6 of Edmonds \cite{Ed} implies that the classes of any $m-1$ elements  of $\{C_1,\ldots ,C_m\}$ in  $H_2(M;\Fb_2)$ span an $(m-1)$-dimensional subspace.   In particular each $C_i$ defines a nonzero class in $H_2(M;\Fb_2)$.  Since $C_i$ is orientable by Lemma~\ref{lemma:basics1}, it will then even support a nonzero integral class.  

Formulas (1) and (3) of Proposition \ref{prop:fixtop1} give $\sum_i \chi (C_i)=24-2\nu$ and $\sum_i C_i\cdot C_i= 2\nu-16$, and  so they combine to give
\begin{equation}\label{eqn:noether}
\sum_i\big(\chi(C_i)+C_i\cdot C_i\big)=8
\end{equation} 
We claim that each term on the left-hand side of \eqref{eqn:noether} is $\le 0$; this evidently gives a contradiction.
If $C_i\cdot C_i<0$, then the fact that the intersection form on $M$ is even implies that $C_i\cdot C_i\le -2$,  and hence $\chi(C_i)+C_i\cdot C_i\le 0$. On the other hand, when $C_i\cdot C_i\ge 0$, then as we have seen,  $C_i$ supports a nonzero integral homology class, and then the adjunction inequality  of Kronheimer-Mrowka (Theorem 1.1 of \cite{KM}) asserts that it is then still true that
$\chi(C_i)+C_i\cdot C_i\le 0$, as desired.

Now that we know that $M^{G}$ is finite,  we invoke the signature defect formula \ref{prop:fixtop1}-(3). It tells us that  $0=2\nu-16$, so that $\nu=8$.
Substituting this in the Lefschetz formula \ref{prop:fixtop1}-(1) then gives that $M^{G}$ consists of $8$ points.
Proposition~\ref{prop:ed} shows that  the $\Zb[G]$-module invariants  of  $H_2(M)$ cannot have any cyclotomic summands, and so 
the number of regular and trivial summands will be as asserted.
\end{proof}

Proposition~\ref{proposition:involution} is already enough to prove 
Corollary~\ref{thm:dehntwistproperty}.

\begin{proof}[Proof of Theorem~\ref{thm:dehntwistproperty}]  First note that a Dehn twist $T_S\in\Diff(M)$ induces an element of $O(H_M)$ that is reflection in a $(-2)$-vector.   Suppose that $g:M\to M$ is a diffeomorphism of finite order such that $g_*=(T_S)_*$. If $\la g\ra $ denotes the group generated by $g$, then the existence of the natural surjection $\la g\ra \to \la g_*\ra\cong\Zb/2\Zb$ implies that the order of $g$ is even, say $2m$,  $m\geq 1$.   Then $g^m$ is a smooth involution.   Since $g_*=(T_S)_*$ has order $2$ then either $g^m_*$ equals $(T_S)_*$, so that $g^m\in W_G$, or 
that $(g^m)_*$ is the identity.  Since $g^m$ is a smooth involution which evidently does not satisfy the conclusion of Proposition \ref{proposition:involution}, we get a contradiction.
\end{proof}

Corollary \ref{cor:nikulininvolution} gives a more precise description of how an involution acts on $M$.


\subsection{Resolution of a spin orbifold of dimension 4}\label{subsect:resolution}
Let $G\subset\Diff(M)$ be any finite group.  When $M^G$ has an isolated fixed point $p$ then the quotient $M/G$ is not a manifold, since a neighborhood of the image of $p$ in $M/G$ is the cone on a lens space.  The goal of this section is to explain how one can sometimes  resolve $M/G$ to obtain a new smooth manifold $M'$ equipped with a smooth $G$-action.  We then deduce properties of the original action from properties of $M'$.

We begin with a general discussion of how to construct spin structures on certain resolutions.  We will apply this to the resolved quotient of a K3 surface by a finite group of prime order.

\para{Spin structures} Recall that a spin structure on an oriented  $4$-manifold $W$ amounts to a lift of the structural group of its tangent bundle to the universal cover of $\SL_4(\Rb)$ (which has degree $2$). In other words, if $\SL_W\to W$ denotes the bundle of oriented bases of the tangent spaces of $W$ (this is a principal $\SL_4(\Rb)$-bundle), then a spin structure is given by a double cover $\widetilde{\SL}_W\to \SL_W$, called a \emph{spinor bundle}, which defines a connected (and hence universal) cover of each $\SL_4(\Rb)$-orbit.  A necessary and sufficient condition for its existence  is the vanishing of the second Stiefel-Whitney class $w_2(W)$ of the tangent bundle. It is unique up to isomorphism if and only if the first Stiefel-Whitney class also vanishes.  We note here that if $W$ is closed then the existence of a spin structure on $W$ implies that its intersection form is even: this follows from 
a special case of Wu's formula, which states that  $x\cup x=w_2(W)\cup x$  for all $x\in H^2(W; \Fb_2)$ (see for instance \cite{MS}).

Let $G$ be a finite subgroup of $\Diff(W)$ that preserves the spin structure in the sense  that
every element of $G$ lifts to the spinor bundle  $\widetilde{\SL}_W\to W$. We say that $G$ is  \emph{even} if $G$ admits a lift \emph{as a group} to $\widetilde{\SL}_W$
(so that there  is no need to extend $G$ with the covering transformation of $\widetilde{\SL}_W\to \SL_W$). 

Suppose $W$ is a complex manifold endowed with a nowhere zero holomorphic $2$-form $\omega$. Then 
there is a  reduction of the structural group of the tangent bundle to $\SL_2(\Cb)$ by letting $\SL(\Cb)_W$ be the bundle of complex bases on which $\omega$ takes the value $1$. Since $\SL_2(\Cb)$ is simply connected, the inclusion $\SL_2(\Cb)\subset \SL_4(\Rb)$ lifts canonically to an embedding of $\SL_2(\Cb)$ in the universal cover of $\SL_4(\Rb)$, and hence this gives rise to a spin structure on $W$. So a finite group of diffeomorphisms 
of $W$ that preserves both the complex structure and the form $\omega$ comes with a lift to the associated spinor bundle: it is even.

\para{The resolved orbit space}
In what follows we consider the following special case: $G$ is a nontrivial finite subgroup of  $\SL_2(\Cb)$ and 
$W$ is an open contractible $G$-invariant neighborhood of the origin in $\Cb^2$, for example the unit ball with respect to a $G$-invariant inner product.  Note that this contains the important  case when $G$ is of order two, its nontrivial element being the antipodal involution.
We observe that $G$ then also leaves $(dz_1\wedge dz_2)|_W$ invariant.
The group $G$ acts freely on $W\ssm\{0\}$ and so the $G$-orbit space $W_G$ of $W$ has, as a (normal) singular point, the image of the origin, which we shall denote by $0_G$; topologically it is the open cone over the quotient of a $3$-sphere by a  free $G$-action. 

This singularity has a well known resolution $\pi: W'\to W_G$. Here $W'$ is a complex manifold of dimension $2$, the map $\pi$ is holomorphic and  is an isomorphism over $W_G\ssm 0_G$ and 
$D:=\pi^{-1}(0_G)$ is a normal crossing curve whose irreducible components  are all copies of $\PP^{1}$ having self-intersection $-2$ 
(we call such an irreducible component a \emph{$(-2)$-curve}). They meet each other transversally in such a manner that  the dual intersection graph is the Dynkin diagram of type $A$, $D$ or $E$.  

The case that interests us most is  when $G$ is cyclic of order $n$, in which case $D$ is a chain of $(-2)$-curves, $n-1$ in number, so that the dual intersection diagram is of type $A_{n-1}$. Moreover, the holomorphic form 
$\omega:=(dz_1\wedge dz_2)|_W$, which because of its $G$-invariance descends to a nowhere zero holomorphic $2$-form  on $W_G\ssm 0_G$, extends across $W'$ as a \emph{nowhere-zero holomorphic $2$-form $\omega'$}. This proves that
the $\SL_2(\Cb)$-reduction of the tangent bundle of the $G$-orbit space of $W\ssm \{0\}$ (which is just $W'\ssm D$) extends to $W'$.  
In particular, $W'$ comes with a spin structure.

Conversely, suppose that we are given a pair  $(W', D)$,  where $D$ is a normal crossing divisor that is a deformation retract of the complex surface $W'$ and  
is of the type above: a finite union of  $(-2)$-curves meeting transversally  so that the dual intersection graph is a 
Dynkin diagram of type $A$, $D$ or $E$. 
Then $G=\pi_1(W'\ssm D)$ is finite. The universal cover of $W'\ssm D$ is a complex surface that extends over $W'$ as a complex normal variety that 
has singularities over the singular part $D_\sing$ (the set of normal crossing points) of $D$. These can be resolved in a minimal manner to produce a 
 complex manifold $\tilde W$ and a holomorphic map $\pi':\tilde W\to W'$ such that $\pi'$ is a double cover over $W'\ssm D_\sing$ and the preimage 
$\tilde D$ of $D$ is a normal crossing divisor.   

The normal crossing divisor $\tilde D$ is simply-connected: its irreducible components are copies of $\PP^1$ and the dual intersection graph is a tree. 
In fact, $\tilde D$ contains at least one exceptional curve of the first kind (a \emph{$(-1)$-curve}), and if we contract such curves and keep on doing that 
until none are left to contract, then we have in fact contracted all of $\tilde D$. We thus obtain a proper holomorphic map $\pi: \tilde W\to W$ to a 
complex surface which maps $\tilde D$ to a singleton $\{o\}$ and is an isomorphism over $W\ssm \{o\}$. The naturality of this construction guarantees 
that the $G$-action extends to $\tilde W$
and descends to $W$ with $o$ as its unique fixed point. A nowhere zero holomorphic $2$-form $\omega'$ on $W'$ determines  one on
$W$ which is $G$-invariant and this identifies $G$ with a finite subgroup of $\SL(T_oW)$.

Note that if $n=2$, then $D$ is a single $(-2)$-curve and the construction of $\tilde W$ requires no blowing up:
it is just a double cover of $W'$ ramified over $D$ and the preimage $\tilde D$ of $D$ is a $(-1)$-curve that can be blown down to produce the surface $W$ on which $G$ acts as an antipodal involution.   In a global setting this construction yields the following.

\begin{corollary}[{\bf Resolved orbit space}]\label{cor:resorbitspace}
Let $W$  be an oriented $4$-manifold.  Let $G$ be a finite subgroup of  $\Diff^+(W)$ such for every $p\in W$ the stabilizer $G_p$ acts in some neighborhood of $p$ by an $\SL_2(\Cb)$-model,
in the sense that there is a $\Cb^2$-valued oriented chart in terms of which the stabilizer $G_p$ acts as a subgroup of $\SL_2(\Cb)$. Then the set $F\subset W$ of $p\in W$ for which $G_p\not=\{1\}$ is  discrete.  Further, 
there is a diagram in the category of $G$-manifolds endowed with a $G$-invariant union of submanifolds that meet transversally:
\[
(W,F)\xleftarrow{\pi_W} (\tilde W, \tilde D)\xrightarrow{\pi_{W'}} (W', D)
\]
and it is obtained locally as above: $\pi_W^{-1}F=\tilde D=\pi_{W'}^{-1}D$ with $G$ acting trivially on $W'$ so that
\[
W\ssm F\xleftarrow{\cong} \tilde W\ssm \tilde D\to W'\ssm D
\] 
is the formation of the $G$-orbit space of $W\ssm F$. Both $\tilde D$ and $D$ admit neighborhoods and  complex structures in terms of which
they become normal crossing divisors of the type described above. If $W$ comes with a $G$-invariant spin structure then so does $W'$, and vice versa.
\end{corollary}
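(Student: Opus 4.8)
The plan is to globalize the explicit local picture built in the preceding paragraphs, gluing the local resolutions to the free part of the action. I would begin with the discreteness of $F$. At any $p\in F$, choose a small $G_p$-invariant ball $B_p$ in the given $\Cb^2$-chart; for $q\in B_p$ the stabilizer $G_q$ is contained in $G_p$, which acts through $\SL_2(\Cb)$. A nontrivial finite-order element $g\in\SL_2(\Cb)$ has no eigenvalue $1$ (its eigenvalues are $\lambda,\lambda^{-1}$, and $\lambda=1$ would force $g$ unipotent, hence $g=1$ in characteristic $0$), so $g$ fixes only the origin; thus $G_q=\{1\}$ for $q\ne p$, and $p$ is isolated in $F$. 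Since $G_{gp}=gG_pg^{-1}$, the set $F$ is $G$-invariant, and $G$ acts freely on $W\setminus F$ by the very definition of $F$. Hence $F$ is discrete (and finite when $W$ is compact).

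Next I would assemble the diagram by gluing. Pick a $G$-invariant system of pairwise disjoint $\SL_2(\Cb)$-charts $B_p$, one per point of $F$. Over each orbit the local construction furnishes the pieces: a minimal resolution $(W'_{\mathrm{loc}},D_{\mathrm{loc}})$ of $B_p/G_p$ and the equivariant resolution $(\tilde W_{\mathrm{loc}},\tilde D_{\mathrm{loc}})$, together with the canonical identifications $\tilde W_{\mathrm{loc}}\setminus\tilde D_{\mathrm{loc}}\xrightarrow{\cong}B_p\setminus\{p\}$ and $\tilde W_{\mathrm{loc}}\setminus\tilde D_{\mathrm{loc}}\to W'_{\mathrm{loc}}\setminus D_{\mathrm{loc}}=(B_p\setminus\{p\})/G_p$, plus complex structures near $\tilde D_{\mathrm{loc}}$ and $D_{\mathrm{loc}}$ realizing them as normal crossing divisors of type $A$, $D$, or $E$. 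I would then define $\tilde W$ by gluing the $\tilde W_{\mathrm{loc}}$ to $W\setminus F$ along the punctured balls $B_p\setminus\{p\}$, and $W'$ by gluing the $W'_{\mathrm{loc}}$ to the orbit space $(W\setminus F)/G$ along $(B_p\setminus\{p\})/G_p$. Since $G$ acts freely on $W\setminus F$, the quotient $(W\setminus F)/G$ is a smooth manifold, so $W'$ is a smooth manifold; and $\tilde W$ inherits a smooth $G$-action restricting to the given one on $\tilde W\setminus\tilde D\cong W\setminus F$, with $G$ trivial on $W'$. The maps $\pi_W$ and $\pi_{W'}$ are assembled from the local blow-down $\tilde W_{\mathrm{loc}}\to B_p$ and the local quotient $\tilde W_{\mathrm{loc}}\to W'_{\mathrm{loc}}$ together with the identity and the quotient map away from $F$; by construction $\pi_W^{-1}F=\tilde D=\pi_{W'}^{-1}D$ and $W\setminus F\xleftarrow{\cong}\tilde W\setminus\tilde D\to W'\setminus D$ is the formation of the $G$-orbit space, as required. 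All gluing data are canonical maps of the local models, hence automatically $G$-equivariant, and the complex structures live inside the local pieces near the divisors, away from the gluing annuli.

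For the spin statement I would argue as follows. On each contractible ball $B_p$ the spin structure of $W$ is unique and therefore agrees with the holomorphic one cut out by $dz_1\wedge dz_2$; since $G_p\subset\SL_2(\Cb)$ lifts canonically \emph{as a group} to the spinor bundle, the action is even near $F$. Using this, the $G$-invariant spin structure on $W\setminus F$ descends along the free cover $W\setminus F\to(W\setminus F)/G=W'\setminus D$ to a spin structure there. Near $D$ the local model carries the nowhere-zero holomorphic $2$-form $\omega'$, hence a canonical spin structure, whose restriction to $W'_{\mathrm{loc}}\setminus D_{\mathrm{loc}}=(B_p\setminus\{p\})/G_p$ is precisely the descended one; the two therefore glue to a spin structure on all of $W'$. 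Conversely, a spin structure on $W'$ pulls back along the free cover $W\setminus F\to W'\setminus D$ to a $G$-invariant spin structure on $W\setminus F$, and since $F$ has real codimension $4$ the restriction $H^{\ast}(W;\Fb_2)\to H^{\ast}(W\setminus F;\Fb_2)$ is an isomorphism in degrees $1$ and $2$, so this extends uniquely (and $G$-invariantly) to a spin structure on $W$.

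The main obstacle is the spin-structure equivalence, and specifically the bookkeeping that makes the descent legitimate: one must verify that the $\SL_2(\Cb)$-models genuinely render the action even along $W\setminus F$ (so that the $G$-invariant spin structure descends to an honest spin structure on the quotient, rather than surviving only as a twisted or $\mathrm{Pin}$-type object), and that the descended spin structure on $(W\setminus F)/G$ matches the holomorphic spin structure coming from $\omega'$ across the overlap annuli. Everything else — the discreteness of $F$, the gluing of the underlying smooth manifolds and the maps $\pi_W,\pi_{W'}$, and the normal-crossing complex structures near $\tilde D$ and $D$ — is a direct globalization of the explicit local models already in hand.
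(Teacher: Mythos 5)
Your proposal is correct and follows essentially the same route as the paper: localize at the fixed points via the $\SL_2(\Cb)$-charts, invoke the local resolution construction from the preceding discussion, and glue the result to the free part of the action; the paper's own proof is a two-sentence compression of exactly this. The extra details you supply (discreteness of $F$ from the fact that a nontrivial finite-order element of $\SL_2(\Cb)$ fixes only the origin, and the descent of the spin structure along the free quotient using the canonical group lift of $G_p\subset\SL_2(\Cb)$ to the spinor bundle) are left implicit in the paper, and your identification of the evenness of the action as the point that makes the descent legitimate is exactly the right place to focus.
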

\begin{proof}
At every $p\in W$ with $G_p\not=\{1\}$, we choose a $\Cb^2$-valued chart $\kappa: U\hookrightarrow \Cb^2$ that  is orientation preserving and  
$G_p$-equivariant  relative to an embedding $G_p\subset \SL_2(\Cb)$. The given spin structure on this chart  is trivial and the preceding discussion shows that the resulting spin structure on the $G_p$-orbit space of $U\ssm\{p\}$ extends (uniquely) across the resolution of the $G_p$-orbit space of $U$.
\end{proof}

\section{Prime order subgroups of \boldmath$\Diff(M)$}
\label{section:prime:order}

Nikulin determined \cite{nikulin1979} which abelian groups $G$ can appear as automorphism groups of some complex K3 surface $X$ that act trivially on $H^{2,0}(X)$.
In our setting these are precisely the abelian subgroups of $\Diff(M)$ that fix a positive $3$-plane in $\LL_\Rb$ and some complex structure on $M$.
If we assume that $G$ is cyclic of prime order $p$, then the primes $p$ that occur are $2,3,5,7$.  A more detailed study  for the case $p=2$ was given by Morrison \cite{Mor} and Van Geemen-Sarti \cite{vGS} and for the odd primes by Garbagnati-Sarti \cite{GS}. There is in fact a very helpful picture in terms of an elliptic fibration  that enables us to get a good understanding of the topology of such an action. We will say more about this in Section \ref{sect:ellfib}.

In this section $G\subset\Diff(M)$ has prime order $p\geq 2$. \emph{Throughout this section we also assume that $G$ fixes pointwise a 
positive-definite $3$-plane in $\LL_\Rb=H_2(M; \Rb)$.}  This implies that $\Lbf_G$ is the orthogonal complement of $\LL^G$ in $\LL$ and that $\Lbf_G$ is negative definite.

\subsection{A first reduction for the case $p$ odd}

Let $R_G$ be the  set of $(-2)$-vectors in  $\Lbf_G$.  Since $\Lbf_G$ is negative definite,  $R_G$ is  finite. It is in fact a root system whose roots are all of the same length. This implies that its irreducible components are of type $A$, $D$ or $E$.  We show here that there are only two possibilities
which are in a sense opposite extremes.

\begin{theorem}[{\bf Dichotomy}]
\label{thm:dich}
Let $G\subset\Diff(M)$ be a group of prime order $p\geq 3$, and suppose that $G$ fixes pointwise a positive-definite $3$-plane in $H_2(M)$.  Assume that $H_2(M)$ has as a $\Zb[G]$-module no cyclotomic summands. If  $\nu$ is the number of regular summands then $\nu (p+1)\le 24$ with equality holding if and only if  $(p,\nu)\in \{(3,6), (5, 4), (7,3)\}$.  In this case $M^G$ consists of $\nu$ points with the $G$ action on the tangent space at each point factoring through a representation to $\SL_2(\Cb)$.  

Furthermore, precisely one of the following holds:
\begin{description}
\item[(Nikulin type)] $R_G=\emptyset$ and the above inequality is an equality. Moreover there is a possibly different action of $G$  on $M$ 
that acts on $H_2(M)$ as the given one and which preserves  a complex structure and Ricci-flat K\"ahler  metric $M$  and a nonzero holomorphic $2$-form on the resulting complex surface. 
\item[(Coxeter type)]
$R_G$ is a subsystem of type $A_{p-1}^\nu$, spans $\Lbf_G$ and $G$ acts in each $A_{p-1}$-summand as a Coxeter transformation of that summand. 
\end{description}
\end{theorem}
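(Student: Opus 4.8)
The plan is to determine $M^G$ precisely enough to extract both the numerical bound and the shape of the negative-definite lattice $\Lbf_G=(\LL^G)^\perp$, by combining four inputs already assembled in this section: the numerical form of the $G$-signature theorem (Proposition~\ref{prop:fixtop1}), Edmonds' reading of $M^G$ off the $\Zb[G]$-module structure (Proposition~\ref{prop:ed}), the Kronheimer--Mrowka adjunction inequality, and the elementary optimization of the local signature defect recorded just after $(3.2')$. Throughout write $M^G$ as a disjoint union of $k$ isolated points and surfaces $C_1,\dots,C_m$, and note that since $H_2(M)$ has no cyclotomic summands, the $\nu$ of Proposition~\ref{prop:fixtop1} (the multiplicity of the cyclotomic character) equals the number of regular summands.

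First I would establish the inequality and the equality list. Proposition~\ref{prop:ed} with $c=0$ gives $b_1(M^G;\Fb_p)=0$, so every surface component is a $2$-sphere. For such a $C_i$, adjunction forces $C_i\cdot C_i\le 0$: if $[C_i]\ne 0$ then a sphere with $C_i\cdot C_i\ge 0$ would violate Kronheimer--Mrowka, so $C_i\cdot C_i\le-2$ (evenness), while a null-homologous $C_i$ has $C_i\cdot C_i=0$; in either case $\Def_{C_i}=\tfrac{p^2-1}{3}C_i\cdot C_i\le 0$. At an isolated point the defect $\Def_z$ is maximal exactly in the $\SL_2(\Cb)$ case $q=-1$, with value $\sum_{j=1}^{p-1}\cot^2(\pi j/p)=\tfrac{(p-1)(p-2)}{3}$. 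Feeding these bounds, together with $\chi(M^G)=k+2m=24-\nu p$, into the defect identity $\sum_z\Def_z+\sum_i\Def_{C_i}=(p-1)(\nu p-16)$ and clearing denominators yields, after substituting $k=24-\nu p-2m$,
\[
\nu p(p+1)\;-\;(p+1)\sum_i C_i\cdot C_i\;+\;2m(p-2)\;\le\;24p,
\]
in which the last two summands are $\ge 0$. Hence $\nu(p+1)\le 24$, and since the derivation used $\sum_z\Def_z\le k\cdot\tfrac{(p-1)(p-2)}{3}$, equality forces this defect estimate to be sharp (every isolated point of $\SL_2(\Cb)$-type) and $m=0$. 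Combining equality with the rank constraint $\nu(p-1)\le 19$ (as $\Lbf_G$ is negative definite of rank $\nu(p-1)$ inside the $19$-dimensional negative part) leaves only $(p,\nu)\in\{(3,6),(5,4),(7,3)\}$, and then $\chi(M^G)=24-\nu p=\nu$ with $m=0$ gives exactly $\nu$ isolated points with $\SL_2(\Cb)$-tangent action.

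For the dichotomy I would begin from the conceptual fact that $G$ has no nonzero fixed vector on $\Lbf_G$ (indeed $\Lbf_G\cong\Ic_G^{\,\nu}$ is purely cyclotomic by Lemma~\ref{lemma:nocyclotomic}). Consequently, if $R_G\ne\emptyset$, the $G$-orbit of any root $r$ has exactly $p$ elements and satisfies $\sum_{i=0}^{p-1}g^i r=0$; a cyclic $\Qb[G]$-module argument shows this orbit spans a rank-$(p-1)$ sublattice, and the resulting circulant Gram matrix identifies it, after reindexing by a generator of $G$, with an $A_{p-1}$ root lattice on which $g$ is a Coxeter element. Next, $G$ cannot permute two or more irreducible components of $R_G$ cyclically, since a free permutation of $p$ mutually orthogonal components would produce a regular summand and hence a $G$-fixed vector in $\Lbf_G$, which is impossible. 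Therefore every irreducible component of $R_G$ is $G$-stable, carries a fixed-point-free order-$p$ isometry, and so has rank a multiple of $p-1$ and characteristic polynomial a power of $\Phi_p$; together with Corollary~\ref{cor:resorbitspace} this matches the picture in which the $\SL_2(\Cb)$-singularities are of type $A_{p-1}$.

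The hard part---and where the promised lattice theory is needed---is to upgrade ``each component is $G$-stable of the right rank'' to ``each component is exactly $A_{p-1}$ and together they span $\Lbf_G$,'' and dually that $R_G=\emptyset$ forces equality. The obstacle is genuine: as the Coxeter-element example for $\Ebf_8$ shows, for $p=3,5$ a $D$- or $E$-type lattice also admits a fixed-point-free order-$p$ isometry with purely cyclotomic characteristic polynomial, so neither the module structure nor the orbit argument alone excludes, say, an $\Ebf_8$ component. Here I would exploit the discriminant invariant of Lemma~\ref{lemma:nocyclotomic}, namely that $\Lbf_G$ has discriminant group $\Fb_p^{\,\nu}$: this matches $\Abf_{p-1}^{\oplus\nu}$ exactly (each $\Abf_{p-1}$ contributing $\Zb/p$) and obstructs unimodular summands such as $\Ebf_8$, while primitivity of $\Lbf_G\hookrightarrow\LL$ and Nikulin's theory of primitive embeddings and discriminant forms should eliminate the remaining mixed and partial configurations and show that a rootless $\Lbf_G$ occurs only at the maximal $\nu$, i.e.\ in the equality case. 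Finally, the complex-geometric claim in the Nikulin case is immediate from what precedes: $R_G=\emptyset$ means $\Lbf_G$ has no $(-2)$-vectors, and the trivial representation appears in $\Lbf_G^\perp=\LL^G_\Rb$ because $G$ fixes a positive $3$-plane there, so Theorem~\ref{thm:K3examples}(2) realizes $G$ by algebraic automorphisms of a Ricci-flat K\"ahler K3; since $G$ fixes the period plane $\Pi$ pointwise, the realization preserves a nonzero holomorphic $2$-form.
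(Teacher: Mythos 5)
Your treatment of the numerical part is essentially the paper's own Step 2 and is correct: Edmonds (no cyclotomic summands, so $c=0$ and every surface component is a sphere), Kronheimer--Mrowka to force $C_i\cdot C_i\le -2$ for homologically nontrivial spheres, the maximal local defect $\tfrac{1}{3}(p-1)(p-2)$, the $G$-signature identity, and finally the rank bound $\nu(p-1)\le 19$ to discard $(11,2)$ and $(23,1)$. Your explicit handling of possibly null-homologous sphere components (where $C_i\cdot C_i=0$ and the defect vanishes) is if anything more careful than the text, and the extraction of the equality case ($m=0$, all fixed points of $\SL_2(\Cb)$-type, $k=\nu$) is sound. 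The closing observation that the ``moreover'' clause of the Nikulin case follows from Theorem~\ref{thm:K3examples}(2) is also correct.

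The dichotomy itself, however, is not proved. You correctly identify the hard point -- excluding a ``mixed'' configuration in which $R_G\neq\emptyset$ but $R_G$ is not an $A_{p-1}^{\nu}$ system spanning $\Lbf_G$ -- but your proposed resolution is only a hope: the discriminant group $\Fb_p^{\nu}$ of $\Lbf_G$ constrains the sublattice generated by $R_G$ only when that sublattice has finite index in $\Lbf_G$, and nothing in your argument rules out $R_G$ generating a proper direct summand with the remainder of $\Lbf_G$ rootless, nor does ``Nikulin's theory of primitive embeddings should eliminate the remaining configurations'' constitute a step. The paper's mechanism is different and is precisely what closes this gap. First (Step 1) one shows that $G$ acts on each irreducible component of $R_G$ through its Weyl group, by checking that for odd $p$ no nontrivial diagram automorphism of an $A$-, $D$- or $E$-diagram can occur (the only candidate, $\Sc_3$ acting on $D_4$ with $p=3$, is excluded because it would fix a vector). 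This yields $w\in W(R_G)$ with $g_*w^{-1}$ either trivial or of order $p$ with rootless coinvariant lattice. In the second case Nikulin's classification forces that coinvariant lattice to have rank $(p-1)\nu_0$ with $(p+1)\nu_0=24$, and the inequality $\nu(p+1)\le 24$ already proved forces $\nu_0=\nu$, hence $\Lbf_{G_0}=\Lbf_G$ and $R_G=\emptyset$. In the first case Carter's classification of prime-order elements of Weyl groups as products of reflections in linearly independent roots (Table~3 of \cite{carter1972}), combined with the $\Zb[G]^{\nu}$-basis $\{g^j(g-1)v_i\}$ of $\Lbf_G$, identifies $R_G$ as $A_{p-1}^{\nu}$ spanning $\Lbf_G$ with $g$ a product of Coxeter elements. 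Without the factorization $g_*=w\cdot(g_*w^{-1})$ (or a genuine substitute), the assertion that \emph{precisely one} of the two alternatives holds -- in particular that a nonempty $R_G$ must span $\Lbf_G$ -- remains open in your write-up.
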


\begin{remarks}\label{rem:}

\begin{enumerate}
\item In the `Nikulin type' case of Theorem~\ref{thm:dich} we do not know whether the given $G$-action preserves a Ricci-flat K\"ahler  structure, but as will become clear below, we 
can come close.  We also suspect that the Coxeter case never occurs.

\item It follows from Corollary 2.2 of Chen-Kwasik \cite{CK} that if $G$  preserves a symplectic form then $M^G$ is finite and  $G$ acts in the tangent space of each fixed point through $\SL_2(\Cb)$. It then follows from Proposition \ref{prop:ed} that $H_2(M)$ has as a $\Zb[G]$-module no cyclotomic summands.  So in this case Theorem~\ref{thm:dich} implies that $\nu (p+1)= 24$, so that the dichotomy there described  holds.
\end{enumerate}
\end{remarks}

We now turn to the proof of Theorem \ref{thm:dich}.

\begin{proof}[Proof of Theorem \ref{thm:dich}] The proof is in three steps. 
\\

\noindent
\emph{Step 1: The action of  $G$ on  $R_G$ is via an element of the Weyl group $W(R_G)$.}

\medskip
Let $v\in R_G$ and let $g\in G\ssm \{1\}$. Then $gv\not=\pm v$ and so $v$ and $gv$ are linearly independent. They therefore span a negative sublattice of rank $2$. Since $v\cdot v=gv\cdot gv=-2$, it follows that $v\cdot gv\in \{-1,0,1\}$. Since $\Lbf_G$ has no nonzero $G$-fixed vectors, we must have 
$\sum_{g\in G} gv=0$. We cannot have $v\cdot gv=0$ for all $g\in G\ssm\{1\}$, for then the $G$-orbit of $v$ consists of pairwise orthogonal vectors and this contradicts  $\sum_{g\in G} gv=0$.
So for some $g\in G$, we will have $v\cdot gv=1$ and it then follows that $Gv$ generates an irreducible root system of rank $\le p-1$.
(This must be of rank exactly $p-1$ with $G$ acting in a standard manner as there are no nontrivial $\Qb[G]$-modules of lower rank.)

Let $R_o$ be an irreducible component of $R_G$. From what we  just proved it follows that $G$ preserves $R_o$. 
We claim that $G$ acts on $R_o$ as an element of the Weyl group $W(R_o)$. If this were not so then it would act nontrivially on the Dynkin diagram of $R_o$.
The Dynkin diagrams of type $A$, $D$ or $E$ with a nontrivial automorphism are $A_r$ ($r\ge 2$), $D_r$ ($r\ge 4$) and $E_6$. For all but $D_4$ this 
is an automorphism group is of order $2$ (and so this cannot happen since we assumed $p$ odd) and for $D_4$ and the group in question is isomorphic to $\Sc_3$. This last case cannot happen either, because then $p=3$ and any order $3$ automorphism of a $D_4$ root system which is not in the Weyl group permutes three mutually perpendicular roots and hence will fix a vector.

\medskip
\noindent
\emph{Step 2: $\nu (p+1)\le 24$ and equality holds if and only if  $(p,\nu)\in \{(3,6), (5, 4), (7,3)\}$ and  $M^G$ consists of $\nu$ points with $G$ acting in the tangent space of each such point through $\SL_2(\Cb)$.}

\medskip
$M^G$ is a union of $m\geq 0$ isolated points $z_1, \dots, z_m$ and $r\geq 0$ connected, embedded subsurfaces $C_1, \dots, C_r$.  Formulas (1) and (3) of Proposition \ref{prop:fixtop1} give 
\begin{gather*}
m+\sum_{j=1}^r \chi (C_j)=24-\nu p,\\
\frac{1}{p-1}\sum_{i=1}^m\Def_{z_i} +\frac{p+1}{3}\sum_{j=1}^r C_j\cdot C_j=\nu p-16, 
\end{gather*}
and  so they combine to give
\begin{equation}\label{eqn:noether0}
 m+ \frac{1}{p-1}\sum_{i=1}^m \Def_{z_i} +\sum_{j=1}^r \big( \chi (C_j)+\frac{p+1}{3}C_j\cdot C_j\big)=8.
\end{equation} 
Since by assumption $H_2(M)$ has as a $\Zb[G]$-module no cyclotomic summands,
Proposition \ref{prop:ed} implies that each $C_j$ has genus zero. In other words, $\chi(C_j)=2$ for each $j$.
As we have seen, each $C_j$ supports a nonzero integral homology class, so that by the adjunction inequality  of Kronheimer-Mrowka (Theorem 1.1 of \cite{KM}),  $C_j\cdot C_j\le -2$. Hence each term of the second sum on the left-hand side of \eqref{eqn:noether0} is $\le 0$. It follows that
\[
m+ \frac{1}{p-1}\sum_{i=1}^m \Def_{z_i} \ge 8.
\]
As we already observed in \S~\ref{subsect:HZsign}, the maximal value of $\Def_{z_i}$ is assumed 
in the case where the action $G$-action on $T_{z_i}M$ factors through $\SL_2(\Cb)$; this value is as Chen-Kwasik note (Lemma 2.4 of \cite{CK})  equal to $\tfrac{1}{3}(p-1)(p-2)$. Thus we get the inequality
\[
 m+ \frac{1}{3}(p-2)m\ge 8,  \text{ or equivalently,  } (p+1)m\ge 24.
\]
On the other hand, $m+\sum_{j=1}^r \chi (C_j)=24-\nu p$ and so $m+\nu p\le 24\le (p+1)m$, which implies that $\nu\le m$ and hence that $\nu (p+1)\le 24$.

Suppose we have equality: $\nu (p+1)= 24$. Then it follows that $\nu=m$ and all the equalities above are equalities: 
$\sum_{j=1}^r (2+\frac{p+1}{3}C_j\cdot C_j)=0$ and since $C_j\cdot C_j\le -2$, this means  that $r=0$ (and so $M^G$ finite) and $(p,\nu)\in \{(3,6), (5, 4), (7,3)\}$
and $\Def_{z_i}=\tfrac{1}{3}(p-1)(p-2)$ for all $i=1, \dots,m$, which means that $G$ acts in $T_{z_i}M$ through $\SL_2(\Cb)$. 
\\

\noindent
\emph{Step 3: Proof of the dichotomy.}

\medskip
Choose a generator $g\in G$ and let  $w\in W(R_G)$ be such  that $w$ acts on $R_G$ as $g_*$. Then $g_*$ and $w$ commute so that either 
$g_*w^{-1}$ has still order $p$ in $\Orth^+(\LL)$ or $g_*=w$. In the first case, $g_*w^{-1}$ generates a subgroup $G_0\subset \Orth^+(\LL)$ of order $p$  with the property that
$L_{G_0}$ contains no $(-2)$-vectors. According to Nikulin \cite{nikulin1979}, this can only happen when $L_{G_0}$ has rank $(p-1)\nu_0 $ with 
$(p, \nu_0)\in\{(3,6), (5,4), (7,3)\}$. In all these cases $(p+1)\nu_0=24$. It is also clear that $\nu_0\le \nu$. But since $(p+1)\nu\le 24$ (by step 1), 
it then follows that $\nu_0=\nu$. So then $\Lbf_G=\Lbf_{G_0}$, meaning that $R_G=\emptyset$ and the action of $G$ being of Nikulin type.

In the second case, we are given that $g$ is contained in a finite reflection group $W$.
We then write $g_*=r_{v_1}\circ \cdots \circ r_{v_m}$ for certain $(-2)$-vectors $v_1,\dots, v_m$ in $R_G$. According to Lemma 3 of \cite{carter1972}, we then may take the $v_i$ to be linearly independent, so that $m=\nu(p-1)$. 

The root system  generated by the $v_1,\dots, v_{\nu(p-1)}$ has only summands of type $A$, $D$ or $E$ and its Weyl group contains $g_*$ as an element of prime order $p$. R.W.~Carter's Table 3 in \cite{carter1972}  tells us that such an element  
is given by a Coxeter transformation of a root subsystem $R_0$, all of whose components  
are of type $A_{p-1}$.  We are given that $\Lbf_G$ is contained in a $\Zb[G]$-submodule of $\LL$ that is isomorphic to $\Zb[G]^\nu$. This means that there exist $v_1, \dots, v_\mu$ in $\LL$
such that $L_G$ has basis 
\[
\{g^j(g-1)v_i\, :\, i=1, \dots , \nu; j=1, \dots, p-1\}.
\]
Since $R_0$ generates a sublattice of $\Lbf_G$ of finite index, this can only happen when
each $(g-1)v_i$ is a $(-2)$-vector. But then it is clear that the above basis must be a root basis of $R_0$. In particular $R_0=R_G$ spans $\Lbf_G$.
\end{proof}

\para{The content in the rest of Section~\ref{section:prime:order}} 
Of the two possibilities in the dichotomy of Theorem \ref{thm:dich}, only the Coxeter case is explicit.
The other, more interesting case,  is when $\Lbf_G$ contains no $(-2)$-vector. Since its existence was established by Nikulin, this  lattice has been the subject of further study (see below for references). Yet such studies 
have mostly been on a case-by-case basis, sometimes aided with computer calculations.

Our take here is somewhat different: it is entirely uniform  (it is not based on a case-by-case treatment and includes the case $p=2$) and leads to a novel characterization of lattices of Nikulin type (see Proposition \ref{prop:Lp}ff). This  will be the key to our proof that the finite prime order  subgroups of $\Diff(M)$ which leave invariant a complex structure  make up a single conjugacy class (Theorem \ref{theorem:classification}). At the same time we analyze the homological aspects of the resolved orbit space construction. Indeed we arrive at the candidate for $\Lbf_G$ via this construction which involves another lattice that we shall refer to as the \emph{Nikulin lattice} (for $p=2$ this was exhibited by Morrison and was thus called by Van Geemen-Sarti). This lattice will also help us to identify the characters of $G$ associated to its representation on the tangent space of a fixed point. We will thus see how close we come to eliminating the Coxeter type in Theorem~\ref{thm:dich}.  We shall see  for $p=2$ that we will always have a Nikulin-type situation and indeed, we will find that then the answer is: quite close.

\subsection{The resolved orbifold $M_G$} 
We assume in this subsection that if $p$ is odd then $G$ acts at each fixed point according to an $\SL_2(\Cb)$-model (when $p=2$ then $M^G$ is finite by Proposition~\ref{proposition:involution},  so this is then always the case).
We denote by $\nu$ the number of rational cyclotomic summands in $\LL_{ \Qb}$. This is then also the number of fixed points of $G$ in $M$ 
and we have $(p+1)\nu=24$ (either by Proposition \ref{proposition:involution} or Theorem \ref{thm:dich}).
We begin by applying  the construction of  Corollary \ref{cor:resorbitspace}.

\begin{proposition}\label{prop:class}
Let $p\geq 2$.  When $p>2$ assume that the group $G$  acts at each fixed point according to an $\SL_2(\Cb)$-model.
Then $\Lbf_G^\vee/\Lbf_G$ is an $\Fb_p$-vector space of dimension $\nu$.  Further, if we form the resolved orbit space as in Corollary \ref{cor:resorbitspace}:
\[
(M, M^G)\xleftarrow{\pi_{M}} (\tilde M, \tilde D)\xrightarrow{\pi_{M'}} (M',D),
\]
then $M'$ is homeomorphic to a $K3$ manifold and the rank of $H_2(M'\ssm D)$ is $2\nu-2$.
\end{proposition}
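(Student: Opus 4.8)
The first and third assertions are homological, and I would dispatch them using the $\Zb[G]$-module structure of $H_2(M)$ together with transfer. For the discriminant of $\Lbf_G$: by the standing hypothesis $H_2(M)$ has, as a $\Zb[G]$-module, only regular and trivial summands, and $\Lbf_G=(\LL^G)^\perp$, so Lemma~\ref{lemma:nocyclotomic} applies verbatim with $H=H_2(M)$ (the intersection form is unimodular and $G$-invariant). It identifies $\Lbf_G^\vee/\Lbf_G$ with the discriminant group of $H^G$, an $\Fb_p$-vector space whose dimension equals the number of regular summands. That number is exactly the cyclotomic multiplicity $\nu$, since each $\Zb[G]$-summand contributes one copy of $\Qb(G)$ rationally; this gives assertion~1. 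For the rank of $H_2(M'\ssm D)$: because $M'\ssm D\cong (M\ssm M^G)/G$ is a free quotient, transfer gives $H_2(M'\ssm D;\Qb)\cong H_2(M\ssm M^G;\Qb)^G$, and removing the $\nu$ isolated fixed points does not change $H_2$ with $\Qb$-coefficients, so this is $H_2(M;\Qb)^G$. Its dimension is $22-\nu(p-1)$, and substituting $\nu(p-1)=24-2\nu$ (from $(p+1)\nu=24$) yields $2\nu-2$, as claimed.

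The substance is that $M'$ is homeomorphic to a K3 manifold. Since $M'$ is a smooth closed oriented $4$-manifold, by Freedman's classification it suffices to show that $M'$ is simply connected and that its intersection form is even, unimodular, and of signature $(3,19)$; for then this form is the unique indefinite even unimodular form of rank $22$, namely the K3 lattice, and (evenness fixing the Kirby--Siebenmann invariant) the K3 manifold is the unique simply connected $4$-manifold carrying it. Unimodularity is Poincar\'e duality. Evenness follows from Corollary~\ref{cor:resorbitspace}: $M$ carries a $G$-invariant spin structure by Lemma~\ref{lemma:basics1}, so $M'$ inherits a spin structure, hence an even intersection form via Wu's formula. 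For the rank I would compute the Euler characteristic additively: $M'\ssm D\cong (M\ssm M^G)/G$ has $\chi=(\chi(M)-\nu)/p=(24-\nu)/p=\nu$, while $D$ is a disjoint union of $\nu$ chains of type $A_{p-1}$, each with $\chi=p$, so $\chi(D)=\nu p$ and $\chi(M')=\nu(p+1)=24$. Together with simple connectivity this gives $b_2(M')=22$.

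The delicate point, and the one I expect to be the \textbf{main obstacle}, is pinning down $\sigma(M')=-16$, since for an even unimodular form of rank $22$ the signature is a priori only constrained to lie in $\{0,\pm 8,\pm 16\}$. I would get it from the resolution $\pi\colon M'\to M/G$. The classes of the $\nu(p-1)$ exceptional curves span a subspace $V\subset H_2(M';\Qb)$ whose intersection form is the orthogonal sum of $\nu$ copies of the negative definite $A_{p-1}$ Cartan lattice; thus $V$ is nondegenerate of rank $\nu(p-1)$ with $\sigma(V)=-\nu(p-1)$, and $H_2(M';\Qb)=V\perp V^\perp$. The projection formula for the contraction of these rational double points then identifies the form on $V^\perp$ isometrically with $H_2(M/G;\Qb)$, and Proposition~\ref{prop:fixtop1}(2) computes $\sigma(M/G)=\sigma(H_2(M;\Rb)^G)=\nu(p-1)-16$. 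Hence $\sigma(M')=\sigma(V)+\sigma(V^\perp)=-\nu(p-1)+(\nu(p-1)-16)=-16$. The step needing the most care is this isometry $V^\perp\cong H_2(M/G;\Qb)$, i.e.\ that contracting the $A_{p-1}$-configurations alters the intersection form only by splitting off the negative definite exceptional lattice; this is the standard behaviour of resolutions of Du Val (rational double point) singularities, but it must be justified in the present purely smooth setting, where $M'$ need not be globally complex.

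Finally, for simple connectivity I would run van Kampen on $M'=(M'\ssm D)\cup N(D)$. Since $M$ is simply connected and $M^G$ is finite, $M\ssm M^G$ is simply connected, so $\pi_1(M'\ssm D)=\pi_1((M\ssm M^G)/G)=G\cong\Zb/p$, generated by a meridian of $D$. The regular neighborhood $N(D)$ deformation retracts onto the simply connected configuration $D$, and its boundary lens spaces kill this meridian, so van Kampen gives $\pi_1(M')=0$. With the intersection form identified as the K3 lattice, Freedman's theorem yields that $M'$ is homeomorphic to the K3 manifold, completing the proof.
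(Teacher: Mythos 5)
Your proposal is correct and follows essentially the same route as the paper: simple connectivity by killing the meridian generator of $\pi_1(M'\ssm D)\cong G$, evenness via the inherited spin structure and Wu's formula, the discriminant statement via Lemma~\ref{lemma:nocyclotomic}, the rank count $22-\nu(p-1)=2\nu-2$, and Freedman to conclude. The one place you diverge is exactly the step you flag as delicate: identifying the form on the orthogonal complement $V^\perp$ of the exceptional classes. You route this through $H_2(M/G;\Qb)$ and the contraction of the $A_{p-1}$-configurations, which (as you note) requires justifying Du Val resolution behaviour in a purely smooth setting. The paper avoids this entirely: it observes that $V^\perp$ is the image of $H_2(M'\ssm D;\Qb)$ and that the transfer for the \emph{free} degree-$p$ cover $M\ssm M^G\to M'\ssm D$ identifies $H_2(M'\ssm D;\Qb)$ with $H_2(M\ssm M^G;\Qb)^G=H_2(M;\Qb)^G$, multiplying the intersection pairing by $p$ and hence preserving the signature $(3,\,19-(p-1)\nu)$. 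This transfer argument is purely topological and closes the gap you identified; if you substitute it for your appeal to rational double points, your proof is complete. (Your Euler-characteristic computation of $b_2(M')=22$ is a harmless addition; in the paper it comes for free from the signature count $(3,19)$.)
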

\begin{proof}
It is clear that  $M'$ is a  oriented closed $4$-manifold. It is simply-connected since the inclusion $M'\ssm D\subset M'$ induces a surjection on fundamental groups; $\pi_1(M'\ssm D$) is cyclic of order $p$ and a generator will die in $M'$ near a point in $D$ of total ramification.

Since the irreducible components of $D$ have an intersection matrix that is negative definite, the map $H_2(D; \Qb)\to H_2(M'; \Qb)$ is injective
and has image a negative-definite subspace of dimension $\nu (p-1)$ (there are $\nu$ pairwise disjoint copies of chains of length $p-1$.)  Its orthogonal complement is the image of  $H_2(M'\ssm D'; \Qb)$.
The  transfer map identifies 
$H_2(M'\ssm D; \Qb)$ with $H_2(M\ssm M^G; \Qb)^G$ and multiplies the intersection pairing with $p$. The latter has signature $(3, 19-(p-1)\nu)$. Hence 
$H_2(M')$ has signature $(3, 19-(p-1)\nu+ (p-1)\nu)=(3,19)$. Since $M$ has a unique spin structure, $G$ must leave that structure invariant  and so  $M'$ inherits 
a spin structure. Wu's formula then implies that the intersection form on $M'$ is even.  
 Freedman's classification of closed, simply-connected $4$-manifolds (Theorem 1.5 of \cite{Fr})
 then implies that $M'$ must be homeomorphic to a 
$K3$ manifold.

The rank of $H_2(M'\ssm D)$ is $22$ minus the rank of $H_2(D)$. Since the latter is $\nu (p-1)$, we get $22- \nu p+\nu=(-2+\nu)+\nu=2\nu-2$.
\end{proof}

We continue with the situation of Proposition \ref{prop:class}.  Consider part of the exact sequence of the Gysin sequence for the pair $(M',D)$:
\[
H^1(M'\ssm D)\to H_2(D)\to H^2(M')\to H^2(M'\ssm D)\to H_1(D).
\]
The first and the last term are zero: since $\pi_1(M'\ssm D)=G$ it follows that $H^1(M'\ssm D)=\Hom (G, \Zb)=0$ and the simple-connectivity of $D$ implies that $H_1(D)=0$. The result is a short exact sequence with the middle term free abelian. 

Let $\Nbf_p$ denote the primitive hull of $H_2(D)$ in $H^2(M)$ so that $\Nbf_p/H_2(D)$ can be identified with the torsion of $H^2(M'\ssm D)$.
By  the universal coefficient theorem,  the torsion subgroup of $H^2(M'\ssm D)$
is $\ext^1(G,\Zb)$ and hence cyclic of order $p$. It follows that  $\Nbf_p$ contains $H_2(D)$ as a sublattice of index $p$. The lattice $H_2(D)$ is isomorphic to $\Abf_{p-1}^{\oplus \nu}$. Since the discriminant group $\Abf_{p-1}^\vee/\Abf_{p-1}$ is cyclic of order $p$, 
the lattice $H_2(D)$ (as embedded in $H^2(M')$) has discriminant group an $\Fb_p$-vector space of dimension $\nu$. Since $\Nbf_p$ contains  $H_2(D)$ as a sublattice of index $p$, the discriminant group $\Nbf_p^\vee/\Nbf_p$ is an $\Fb_p$-vector space of dimension $\nu-2$.
Essentially by construction we now have an exact sequence
\[
0\to \Nbf_p\to H^2(M')\to H^2(M'\ssm D)_{\tf}\to 0,
\] 
where tf stands for the torsion free quotient. The duality between $H^2(M'\ssm D)_{\tf}$ and $H_2(M'\ssm D)$  is realized inside
$H^2(M'; \Qb)$: if we regard $H_2(M'\ssm D)$ as embedded in $H^2(M')$ via $H_2(M'\ssm D)\to H_2(M')\cong H^2(M')$, then it is simply
the orthogonal complement of $\Nbf_p$ in $H^2(M')$ and $H^2(M'\ssm D)_{\tf}$ is
identified with  $H_2(M'\ssm D)^\vee\subset H_2(M'\ssm D; \Qb)\subset H^2(M';\Qb)$ as the image of $H^2(M')$ under the orthogonal projection along $\Nbf_p$.  
In particular, this identifies the discriminant groups of $H_2(M'\ssm D)$ and  $\Nbf_p$ and under this
identification the discriminant quadratic forms are each others opposite. Since the group $\Nbf_p^\vee/\Nbf_p$ is a $\Fb_p$-vector space, $pH^2(M'\ssm D)\subset H_2(M')$ and hence the intersection pairing on $H^2(M'\ssm D)_\tf$ takes values in $\frac{1}{p}\Zb$. In other words, if we multiply the pairing on 
$H_2(M')$ with $p$, we obtain an integral  form  on $H^2(M'\ssm D)_\tf$: it makes $H^2(M'\ssm D)_\tf(p)$ a lattice.

\begin{proposition}\label{prop:Nsequence}
The  map $\pi_{M'}^*$ identifies $H^2(M')/\Nbf_p\cong H^2(M'\ssm D)_\tf$ with $H^2(M)^G$, but multiplies the intersection pairing with $p$.  It also defines an injective  morphism of exact sequences
of which the top sequence is naturally split:
\begin{center}
\begin{tikzcd}
0\arrow{r}&  H_2(\tilde D) \arrow{r}{\textrm{cycle}} &H^2(\tilde M)^{G} \arrow{r} &H^2(M)^{G} \arrow{r}&0\\
0\arrow{r}&  \Nbf_p\arrow{r}{\textrm{cycle}}\arrow{u}&H^2(M') \arrow{r}\arrow{u}{\pi_{M'}^*} &H^2(M'\ssm D)_\tf\arrow{r}\arrow{u}{\cong}& 0\\
\end{tikzcd}
\end{center} 
Dually, the map $\pi_{M'*}$ induces an isomorphism $H_2(M)_G\cong H^2(M'\ssm D)$ that multiplies the intersection pairing with $p$.  It defines a morphism of exact sequences of which the top sequence is naturally split: 
\begin{center}
\begin{tikzcd}
0&  H^2(\tilde D)\arrow{l}\arrow{d} &H_2(\tilde M)_{G}\arrow{l}\arrow{d}{\pi_{M'*}}  &H_2(M)_{G} \arrow{l}\arrow{d}{\cong}&0 \arrow{l}\\
0&  \Nbf_p^\vee\arrow{l}&H_2(M') \arrow{l}&H_2(M'\ssm D)\arrow{l}&  \arrow{l}0\\
\end{tikzcd}
\end{center} 

\end{proposition}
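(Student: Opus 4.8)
The plan is to read off both rows from the two maps of the resolution diagram and to construct the vertical arrows from $\pi_{M'}$, reducing everything to a single \emph{integral} isomorphism that I will pin down by a discriminant count. The bottom row is already in hand: it is the sequence $0\to\Nbf_p\to H^2(M')\to H^2(M'\ssm D)_\tf\to 0$ extracted from the Gysin sequence of $(M',D)$, together with the facts established just above, namely that $\Nbf_p$ is the primitive hull of $H_2(D)\cong\Abf_{p-1}^{\oplus\nu}$ with $\Nbf_p^\vee/\Nbf_p\cong\Fb_p^{\nu-2}$, and that the pairing on $H^2(M'\ssm D)_\tf$ becomes integral after scaling by $p$. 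For the top row I would use that $\pi_M\colon\tilde M\to M$ is a blow-down: it contracts $\tilde D$ to the finite set $M^G$ and is an isomorphism elsewhere, so $H^2(\tilde M)=\pi_M^*H^2(M)\oplus H_2(\tilde D)$ with the two summands orthogonal (because $\pi_{M*}[\tilde D_j]=0$), $H_2(\tilde D)$ entering by its cycle classes. This decomposition is $G$-equivariant with $G$ acting on the exceptional classes through orientation-preserving automorphisms of rational curves; taking invariants yields the split sequence $0\to H_2(\tilde D)\to H^2(\tilde M)^{G}\to H^2(M)^{G}\to 0$, the splitting being $\pi_M^*$, a section of $\pi_{M*}$ since $\pi_M$ has degree one.

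The vertical maps come from the degree-$p$ branched cover $\pi_{M'}\colon\tilde M\to M'$. First I would record two standard facts: $\pi_{M'}^*$ is injective (the cover has nonzero degree) and multiplies the intersection pairing by $p$ (projection formula $\int_{\tilde M}\pi_{M'}^*a\cup\pi_{M'}^*b=p\int_{M'}a\cup b$). Since $D$ lies downstairs, the cycle classes of its components pull back to $G$-invariant classes supported on $\tilde D$; hence $\pi_{M'}^*$ carries $H_2(D)$, and by integrality its primitive hull $\Nbf_p$, into $H_2(\tilde D)$ (using that $H_2(\tilde D)$ is a primitive summand of $H^2(\tilde M)^{G}$). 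This produces the left vertical arrow, and with the middle arrow $\pi_{M'}^*$ a commuting ladder by naturality of the cycle maps; injectivity of the morphism is then immediate. Composing $\pi_{M'}^*$ with the projection $\pi_{M*}$ defines $\phi\colon H^2(M')\to H^2(M)^{G}$, and a short primitivity argument gives $\ker\phi=\Nbf_p$: an element pulls back into $H_2(\tilde D)$ precisely when it is orthogonal to the complement of $\Nbf_p$, i.e.\ lies in $\Nbf_{p,\Qb}\cap H^2(M')=\Nbf_p$. Thus $\phi$ descends to a finite-index injection $\bar\phi\colon H^2(M'\ssm D)_\tf\hookrightarrow H^2(M)^{G}$, and by the pairing computation it becomes an \emph{isometry} after rescaling the source, so $\bar\phi\colon H^2(M'\ssm D)_\tf(p)\hookrightarrow H^2(M)^{G}$ preserves the forms.

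The crux—and the step I expect to be the main obstacle—is upgrading this finite-index isometric embedding to an isomorphism, which is where the integral (rather than merely rational) content lies. Here I would compare discriminants. Scaling the bottom-row data gives $\lvert\,\mathrm{disc}\,H^2(M'\ssm D)_\tf(p)\,\rvert=p^{\nu}$ (from $\mathrm{disc}\,\Nbf_p\cong\Fb_p^{\nu-2}$ and rank $2\nu-2$, cf.\ Proposition~\ref{prop:class}, using $\nu(p+1)=24$ to match ranks with the target). On the other hand Lemma~\ref{lemma:nocyclotomic} gives $\mathrm{disc}\,H^2(M)^{G}\cong\mathrm{disc}\,\Lbf_G\cong\Fb_p^{\nu}$, again of order $p^{\nu}$. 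For an isometric embedding $A\hookrightarrow B$ of equal rank one has $\lvert\mathrm{disc}\,A\rvert=[B:A]^2\,\lvert\mathrm{disc}\,B\rvert$, so here $[B:A]^2=p^{\nu}/p^{\nu}=1$ and $\bar\phi$ is an isomorphism; this simultaneously yields the ``$\times p$'' assertion and identifies $H^2(M')/\Nbf_p$ with $H^2(M)^{G}$. The delicate point is exactly this discriminant bookkeeping (equivalently, the vanishing of $\mathrm{coker}\,\phi$), since the transfer isomorphism only controls matters up to $p$-power torsion.

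Finally, the homology statement is dual to the cohomology one. The map $\pi_{M'*}$ is adjoint to $\pi_{M'}^*$ for the intersection pairings, so applying Poincar\'e--Lefschetz duality to the whole ladder transposes it into the asserted homology ladder, with $\pi_{M*}$ (now a surjection split by $\pi_M^*$) providing the splitting of the top row and the same factor $p$ reappearing; the isomorphism $H_2(M)_{G}\cong H^2(M'\ssm D)$ is the dual of $\bar\phi$. I would therefore carry out the cohomological claims in full and obtain the homological ones formally by duality, so that the only genuine computation—the discriminant match—is performed once.
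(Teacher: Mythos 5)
Your proposal is correct, and it reproduces the paper's overall architecture (bottom row from the Gysin sequence of $(M',D)$, top row from the blow-up structure of $\pi_M$ with splitting by $\pi_M^*$, vertical maps from $\pi_{M'}^*$ with the factor $p$ coming from the degree of the cover), but the decisive step is handled by a genuinely different mechanism. The paper establishes the right-hand vertical isomorphism by running the Cartan--Leray spectral sequence of the \emph{free} cover $M\ssm M^G\to M'\ssm D$, which yields the exact sequence $0\to\Fb_p\to H^2(M'\ssm D)\to H^2(M)^G\to 0$ in one stroke; the torsion-free quotient is then literally $H^2(M)^G$, and the homology version is obtained by rerunning the same spectral sequence in homology. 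You instead get only a finite-index isometric embedding $H^2(M'\ssm D)_\tf(p)\hookrightarrow H^2(M)^G$ from transfer and the projection formula, and close the gap by the discriminant count $p^{\nu}=[\,B:A\,]^2\,p^{\nu}$, using $\lvert\mathrm{disc}\,\Nbf_p\rvert=p^{\nu-2}$ (which requires the index-$p$ statement $[\Nbf_p:H_2(D)]=p$ coming from the universal coefficient computation preceding the proposition) on one side and Lemma~\ref{lemma:nocyclotomic} together with Proposition~\ref{prop:class} on the other. Both routes are valid and non-circular. The spectral-sequence argument is more self-contained and simultaneously identifies the $\Fb_p$ of torsion; your argument avoids spectral sequences entirely but leans on the lattice bookkeeping already in place, and it makes transparent exactly where the integral (as opposed to rational) content sits --- you correctly identify the cokernel question as the crux. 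Deducing the homology ladder by dualizing the cohomology one, rather than recomputing, is also a legitimate shortcut, with the caveat that the torsion of $H^2(M'\ssm D)$ must be tracked separately if one wants the statement before passing to torsion-free quotients.
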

\begin{proof}
We first examine the unramified $G$-cover $M\ssm M^G \to M'\ssm D$. For such a cover we have a spectral sequence
\[
E^{p,q}_2=H^p(G; H^q(M\ssm M^G))\Rightarrow H^{p+q}(M'\ssm D).
\]
Since $M^G$ is finite and $M$ is a simply connected 4-manifold, $H^1(M\ssm M^G)=0$ and $H^2(M)\to H^2(M\ssm M^G)$ is an isomorphism. 
So $E^{p,1}_2=0$. On the other hand, $E^{p,0}_2=H^p(G; \Zb)$ is trivial in odd degrees and a copy of $\Fb_p$ in positive even degrees, whereas $E^{0,q}_2=H^q(M\ssm  M^G)^{G}$. It follows that $E^{p,q}_\infty=E^{p,q}_2$ when $p+q=2$ so that we  
have a short exact sequence
\begin{equation}
0\to \Fb_p\to H^2(M'\ssm D)\to H^2(M)^{G}\to 0.
\end{equation}
This establishes the isomorphism $H^2(M'\ssm D)_\tf\cong H^2(M)^G$. 
The isomorphism $H_2(M)_G\cong H^2(M'\ssm D)$ is obtained in a similar manner by means of the  spectral sequence
\[
E_{p,q}^2=H_p(G; H_q(M\ssm M^G))\Rightarrow H_{p+q}(M'\ssm D),
\]
where we now need to observe that again $E_{p,1}^2=0$, but that $E_{p,0}^2=H_p(G; \Zb)$ is a copy of $\Fb_p$ in odd positive degrees and trivial in even positive degrees.

The lower exact sequence of the first diagram maps to the exact sequence of the pair 
$(\tilde M, \tilde M\ssm \tilde D)$, of which the relevant part is  
\[
H^1(M\ssm M^G)\to H_2(\tilde D)\to H^2(\tilde M) \to H^2(M\ssm M^G) \to H_1(\tilde D).
\]
The extremal terms vanish:  we already noted that $H^1(M\ssm M^G)=0$ and $H^2(M\ssm M^G)\cong H^2(M)$ and since the connected components of $\tilde D$ are simply connected,
$H_1(\tilde D)=0$. This not only gives a short exact sequence and also identifies it (via Poincar\'e duality on $M$ and $\tilde M$) with the one associated to the pair $(\tilde M, \tilde D)$:
\[
0\to H_2(\tilde D)\to H_2(\tilde M)\xrightarrow{\pi_{M*}} H_2(M)\to 0,
\]
and this sequence is split by $\pi_M^*$. It therefore remains exact if we pass to $G$-invariants. This proves the exactness of the top sequence.

Since $\pi_{M'_*}$ takes the fundamental class of $\tilde M$ to $p$ times the fundamental class of $M'$, the vertical maps 
multiply the intersection forms with $p$. 
The splitting property follows from the fact that $\tilde M\to M$ is an iterated blow-up: the image of the cycle map $H_2(\tilde D)\to H^2(\tilde M)$ supplements the image of $H^2(M)\to H^2(\tilde M)$
and equals its orthogonal complement with respect to the intersection pairing.

The assertions regarding the second diagram are proved  in a similar fashion. 
\end{proof}

As we will make clear in Subsection \ref{subsect:nikulin} below, Proposition \ref{prop:Nsequence} implies that the Nikulin lattice completely determines the discriminant form of $\Lbf_G$ and hence its  `genus'.

\subsection{The Nikulin lattice}\label{subsect:nikulin}

In this subsection we give a complete description of $\Nbf_p$ as a lattice.  Number the fixed points of $G$ on $M$ by $p_1, \dots , p_\nu$
and write $D_i$ resp.\ $\tilde D_i$ for the connected component of $D$ resp.\  $\tilde D$ over $p_i$. We number the irreducible components
of $D_i$ by $D_{i,1},\dots , D_{i,p-1}$ so that $D_{i,j}$ and $D_{i,j'}$ are disjoint unless $|j-j'|\le 1$. These irreducible components come with an 
orientation and we will identify them with the homology classes in $H_2(D)$ that they represent. Thus $H_2(D_i)$ is as a lattice isomorphic with
$\Abf_{p-1}(-1)$ and the basis $\{ D_{i,j}\}$ of $H_2(D)$ consists of $(-2)$-vectors and makes up a system of simple roots.

The group $H_2(D_i)^\vee/H_2(D_i)$ is cyclic of order $p$. The way we numbered  the basis of $H_2(D_i)$ yields a natural representative for a generator, namely $\varpi_i:=\frac{1}{p}\sum_{j=1}^{p-1} jD_{i,j}$. This is in fact  the antipode of a fundamental weight: $\varpi_i\cdot D_{i,j}=0$ unless $j=p-1$, in which case we get $-1$. This also shows that $\varpi_i\cdot\varpi_i=\frac{1-p}{p}$. 
Note however that if we number the elements of $D_i$ in the opposite direction (replacing $D_{i,j}$ by $D_{i,p-j}$), then this replaces $j$ by $p-j$. 
 
A generator of $\Nbf_p/H_2(D)$ can be represented by some $\varpi\in H_2(D)^\vee$. We write this as 
$\varpi:=\sum_{i=1}^\nu k_i\varpi_i$ with $k_i\in \{0,1,\dots,p-1\}$.
If we replace in the exact sequence above $M'$ by a regular neighborhood $U_i$ of $D_i$, then the preimage of $U_i$ will still be connected and so
the same argument shows that $k_i\not=0$. But since the numbering of the elements of $D_i$ in the opposite direction replaces $D_{i,j}$ by $D_{i,p-j}$ (and hence $k_i$ by $p-k_i$), only the image of $k_i^2$ in $\Fb_p^\times$ is well-defined. The following proposition not only determines these images, but also gives a concrete geometric interpretation of them.

\begin{proposition}\label{prop:Nikulinform}
After possible renumbering, the image of $(k_1^2, \dots, k_\nu^2)$ in $(\Fb_p^\times)^\nu$ is for $p=2,3,5,7$ equal to  respectively $(1,1,1,1,1,1,1,1)$, 
$(1,1,1,1,1,1)$, $(1,1,4,4)$ and $(1,4,2)$. If we then let $(k_1,\dots, k_\nu)$ equal respectively  $(1,1,1,1,1,1,1,1)$, 
$(1,1,1,1,1,1)$, $(1,1,2,2)$ and $(1,2,3)$ and let
\[
\varpi:=\sum_{i=1}^\nu \frac{k_i}{p}\sum_{j=1}^{p-1} jD_{i,j},
\]
then $\Nbf_p=H_2(D)+\Zb \varpi$ is an even overlattice of $H_2(D)$ whose $(-2)$-vectors are those of $H_2(D)$. Furthermore, if
$g\in G$ has characters $\chi_i, \chi_i^{-1}$ in $T_{p_i}M$ (for a complex structure on $T_{p_i}M$ compatible with the given orientation), then
$\{\chi_i, \chi_i^{-1}\}=\{\chi_1^{k_i}, \chi_i^{-k_i}\}$.

This completely determines the isomorphism types of the lattices $H_2(M'\ssm D)$ and $\LL^G$.
\end{proposition}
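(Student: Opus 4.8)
The plan is to establish the four assertions—the residues $k_i^2$, the evenness of $\Nbf_p$ together with its root system, the local characters, and the resulting determination of $H_2(M'\ssm D)$ and $\LL^G$—by playing one \emph{global} lattice constraint off against a \emph{local} analysis at each fixed point, and then invoking uniqueness in a genus. The only new input beyond the preceding discussion is that $\Nbf_p$, being the primitive hull of $H_2(D)$ inside the even lattice $H^2(M')$ (even by Proposition~\ref{prop:class}), is itself even; from the text we already know each coefficient $k_i$ is nonzero in $\Fb_p$ (the local connectivity argument applied to a regular neighborhood $U_i$) and that $\varpi_i\cdot\varpi_i=\tfrac{1-p}{p}$.

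First I would pin down $(k_1^2,\dots,k_\nu^2)$. Since the $D_i$ are mutually orthogonal, the generator $\varpi=\sum_i k_i\varpi_i$ of $\Nbf_p/H_2(D)$ satisfies
\[
\varpi\cdot\varpi=-\frac{p-1}{p}\sum_{i=1}^\nu k_i^2 .
\]
Evenness of $\Nbf_p$ forces $\varpi\cdot\varpi\in 2\Zb$, and since $\gcd(p-1,p)=1$ this already gives $p\mid\sum_i k_i^2$. Each $k_i^2$ is therefore a \emph{nonzero} square in $\Fb_p$, and the multiset of these $\nu$ squares must sum to $0$. A finite check shows that for each of the four pairs $(p,\nu)\in\{(2,8),(3,6),(5,4),(7,3)\}$ there is a \emph{unique} such multiset: respectively all $1$'s, all $1$'s, two $1$'s and two $4$'s, and one each of $1,2,4$. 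This yields the asserted tuples up to the renumbering in the statement.

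Next I would verify evenness and the root system for the explicit integer values of $(k_i)$. Evenness is the direct check that $\sum_i k_i^2$ is divisible by $p$ in each case, giving $\varpi\cdot\varpi=-4,-4,-8,-12$ for $p=2,3,5,7$. For the $(-2)$-vectors, write $w\in\Nbf_p\ssm H_2(D)$ as $w=\sum_i w_i$ with $w_i\in H_2(D_i)^\vee$ lying in the coset $s_i:=m k_i\in\Fb_p$, where $m\not\equiv 0$. Because every $k_i\ne 0$, every $s_i\ne 0$, and the minimal norm of a nonzero coset of $\Abf_{p-1}(-1)^\vee/\Abf_{p-1}(-1)$ is $-\tfrac{s_i(p-s_i)}{p}\le-\tfrac{p-1}{p}$ (attained by the minuscule weight); summing over all $\nu$ points gives
\[
w\cdot w\le-\frac{\nu(p-1)}{p}<-2,
\]
since $\nu(p-1)/p$ equals $4,4,16/5,18/7$, each exceeding $2$. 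Hence no root lies outside $H_2(D)$, so the $(-2)$-vectors of $\Nbf_p$ are exactly those of $H_2(D)$.

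The hard part will be the character identification $\{\chi_i,\chi_i^{-1}\}=\{\chi_1^{k_i},\chi_1^{-k_i}\}$, the one genuinely local statement. Here I would analyze the standard resolution of $\Cb^2/G$ at $p_i$, where the $G$-action has weights $(a_i,-a_i)$: the link is a lens space with $\pi_1=G$, and the coboundary map in the Gysin sequence used in Proposition~\ref{prop:Nsequence} sends the generator of the local discriminant group $H_2(D_i)^\vee/H_2(D_i)$ to the monodromy character of the unramified $G$-cover $M\ssm M^G\to M'\ssm D$ near $p_i$. Matching the coefficient $k_i$ of $\varpi_i$ in the global gluing vector against this local monodromy weight forces $k_i\equiv\pm a_i\pmod p$, i.e.\ $\chi_i=\chi_1^{\pm k_i}$, the sign being exactly the ambiguity already present in $k_i$ (reversal of the numbering of the chain $D_i$). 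The care needed is in tracking the orientation of $D_i$ and the normalization of $\varpi_i$ as the antipode of a fundamental weight. Finally, with $\Nbf_p$ now fixed as an abstract lattice, its discriminant form is determined; its orthogonal complement $\Nbf_p^\perp\subset H^2(M')\iso\LL$ has signature $(3,19-\nu(p-1))$ and discriminant form opposite to that of $\Nbf_p$, a group whose length $\nu-2$ falls short of the rank by at least $2$ in every case, so $\Nbf_p^\perp$ is unique in its genus by Nikulin's uniqueness theory for indefinite even lattices. The scaling isomorphisms of Proposition~\ref{prop:Nsequence} then identify $H_2(M'\ssm D)$ with $\Nbf_p^\perp$ and $\LL^G$ with $\Nbf_p^\perp$ rescaled by $1/p$, which determines both isomorphism types.
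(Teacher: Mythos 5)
Your proposal is correct and follows essentially the same route as the paper: evenness of $\Nbf_p$ (inherited from $H^2(M')$) forces $\sum_i k_i^2\equiv 0\pmod p$, a finite check pins down the multiset of squares, the character identification comes from comparing the global character determined by $\varpi$ on $\pi_1(M'\ssm D)\cong G$ with its restriction to a regular neighborhood of each $D_i$, and the last claim follows from Nikulin's uniqueness criterion since the rank $2\nu-2$ of $H_2(M'\ssm D)$ exceeds the length $\nu-2$ of its discriminant group by at least $2$. Two small remarks. First, your bound $w\cdot w\le -\nu(p-1)/p<-2$ for $w\in\Nbf_p\ssm H_2(D)$, obtained from the minimal-norm vectors (minuscule weights) in each nonzero coset of $\Abf_{p-1}(-1)^\vee/\Abf_{p-1}(-1)$, is a clean and complete justification of the step the paper dismisses with ``one can check that this does not introduce any new $(-2)$-vectors.'' Second, the final identification is misstated: by Proposition~\ref{prop:Nsequence} one has $\LL^G\cong H^2(M'\ssm D)_{\tf}(p)=H_2(M'\ssm D)^\vee(p)$, which is \emph{not} the same as $H_2(M'\ssm D)(1/p)$ (the latter need not even be integral, e.g.\ on the $\Ebf_8(-1)$-summand when $p=2$); this slip is harmless for the conclusion, since either way the isomorphism type of $\LL^G$ is functorially determined by that of $H_2(M'\ssm D)$.
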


\begin{proof}
Since $\varpi\in \Nbf_p$, we must have 
\[
\frac{1-p}{p}\sum_{i=1}^\nu k_i^2=\varpi\cdot \varpi\in 2\Zb, \text{ or equivalently, } \frac{1-p}{2}\sum_{i=1}^\nu k_i^2\equiv0\pmod{p}.
\]
If $p=2$ then there is nothing to verify, except perhaps to observe that $\frac{-1}{2}\cdot 8$ is indeed even.  
If $p$ is odd then $\frac{1-p}{2}\in \Fb_p^\times$ and so this is then equivalent to $
\sum_{i=1} k_i^2\equiv0 \pmod{p}$. The squares in $\Fb_3^\times$, $\Fb_5^\times$, $\Fb_7^\times$ are resp.\    $\{1\}$, $\{1,4\}$, $\{1,2,4\}$.
The linear combinations that are divisible by $p$ admit a unique solution: up to a permutation the image of $(k_1^2,\dots , k^2_\nu)$ in $(\Fb_p^\times)^\nu$ is equal to resp.\ $(1,1,1,1,1,1)$, $(1,1,4,4)$ and $(1,2,4)$. This clearly fixes the isomorphism type of $\Nbf_p$. For the given choices of $(k_1, \dots, k_\nu)$ we see
that $\varpi\cdot \varpi$ is resp. $-4$, $-4$, $-8$, $-12$, and one can check that this does not introduce any new $(-2)$-vectors in $\Nbf_p$.

For the assertion regarding the action of $G$ on the tangent spaces at the fixed points, let  $W_i$ be the ball-like neighborhood of $p_i$ that we used to complex-linearize the $G$-action at $p_i$. So we obtain a regular neighborhood $W'_i$ of $D_i$ in $M'$ for which $W'_i\ssm D_i\subset M'\ssm D$ induces an isomorphism on fundamental groups and hence  defines an isomorphism $G\cong H_1(W'_i\ssm D_i)$. The exact sequences for the pair $(W'_i, W'_i\ssm D_i)$ and $(M, M\ssm D)$ produce the commutative diagram
\[
\begin{tikzcd}
H_2(M)\arrow{r}\arrow{d} &H^2(D)\arrow{r}\arrow{d} &H_1(M\ssm D) \arrow{r}\arrow{d}{\cong}&0\\
H_2(D_i)\arrow{r}&H^2(D_i) \arrow{r}&H_1(W'_i\ssm D_i)\arrow{r}& 0\\
\end{tikzcd}
\]
in which the middle map is given by projection. The upper sequence identifies $H_1(M\ssm D)$ with the cokernel of $H_2(M')\to H^2(D)$,  which is just 
$H_2(D)^\vee/ \Nbf_p^\vee$. The element $\varpi\in\Nbf_p$ defines a map  $x\in H_2(D)^\vee\mapsto x\cdot \varpi\in\Qb$
takes values in $\frac{1}{p}\Zb$ and is of course $\Zb$-valued on $\Nbf_p^\vee$. Indeed, its reduction modulo $\Zb$ defines an isomorphism 
$H_2(D)^\vee/ \Nbf_p^\vee\cong \frac{1}{p}\Zb/\Zb$, so that $x\mapsto \exp(2\pi\sqrt{-1} x\cdot\varpi)$ defines a nontrivial character $\chi$ on $G$.
On the other hand, the lower sequence identifies $H_1(W'_i\ssm D_i)$ with the cokernel of 
$H_2(D_i)\to H^2(D_i)\cong H_2(D_i)^\vee$ which has the image of $\varpi_i$ as a  generator. Via the identification $H_1(W'_i\ssm D_i)\cong G$ 
the character $\chi$ takes on this generator the value $\exp(2\pi\sqrt{-1}\varpi_i\cdot\varpi)=\exp(2\pi\sqrt{-1}\varpi_i\cdot k_i\varpi_i)=\exp(2\pi\sqrt{-1}k_i/p)$, and so  the assertion follows.

The discriminant group of $H_2(M'\ssm D)$ is canonically isomorphic to the one of $\Nbf_p$ (and so is an $\Fb_p$-vector space of dimension $\nu-2$) and under this isomorphism their  discriminant quadratic forms are  opposite. By Proposition \ref{prop:Nsequence}, $H_2(M'\ssm D)$ has rank $2\nu-2$ and its intersection pairing is indefinite. Since that rank is at least two more than the number ($\nu-2$) of generators of the discriminant group of 
$H_2(M'\ssm D)$,  this completely determines the isomorphism type of $H_2(M'\ssm D)$ by Corollary 1.13.3 of  \cite{nikulin1980}.  This is then also true for  $\LL^G$ since 
\[\LL^G\cong H^2(M'\ssm D)_\tf(p)=H_2(M'\ssm D)^\vee(p)\]
by Proposition~\ref{prop:Nsequence}.
\end{proof}

\begin{remark}\label{rem:ellfibrationcont}
The numbers $k_i$ in Proposition~\ref{prop:Nikulinform} are recognized in terms of an elliptic fibration  (see Figure \ref{figure:fiberaction} and Subsection \ref {sect:ellfib} for more details) as the rotation numbers: we can label the  $I_p$-fibers 
$F_1,\dots, F_\nu$ such that $g\in G$ rotates in $F_i$ over $k_i$ times the angle over which it rotates in $F_1$.
\end{remark}

\begin{lemma}\label{lemma:nikulinlattice}
The lattice $\Nbf_p$ has the following properties.
\begin{enumerate}
\item The $(-2)$-vectors of $\Nbf_p$ are those of $H_2(D)$; that is, the elements of the root system $R=\sqcup_{i=1}^\nu  R_i$, with each $R_i$ of type $A_{p-1}$ with root basis 
$\{D_{i,j}\}_{j}$.
\item The discriminant group of $\Nbf_p$ is represented by the set of $\Fb_p$-linear combinations $\sum_{i=1}^\nu l_i\varpi_i$ for which $\sum_i k_il_i=0$ modulo the span of $\varpi=\sum_i k_i\varpi_i$.
\item  The group of orthogonal transformations of $\Nbf_p$ that act trivially on its discriminant group $\Nbf_p^\vee/\Nbf_p$ is the Weyl group $W(R)=\prod_i W(R_i)$ of $R$.
\end{enumerate}
\end{lemma}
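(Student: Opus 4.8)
The plan is to treat the three assertions in turn; the first two are bookkeeping with the explicit generators, while the third contains the real content.

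\emph{Part (1).} This is exactly the claim about $(-2)$-vectors already recorded in Proposition~\ref{prop:Nikulinform}, so strictly nothing remains, but I would isolate the clean reason. Every class of $\Nbf_p/H_2(D)$ is represented by $m\varpi$ for a unique $m\in\{0,1,\dots,p-1\}$, and a vector of $\Nbf_p$ lying over $m\varpi$ splits orthogonally across the components $D_i$; in the $i$-th component it sits in the coset $mk_i\varpi_i+H_2(D_i)$ of the root lattice $\Abf_{p-1}(-1)$. The square length of a vector of $\Abf_{p-1}(-1)$ in the coset labelled by $c\in\{0,\dots,p-1\}$ is at most $-\tfrac{c(p-c)}{p}$, attained on the corresponding minuscule weight. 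Summing these over $i$ (with $c\equiv mk_i$) and inserting the explicit $k_i$ of Proposition~\ref{prop:Nikulinform}, a short computation gives maximal square $-4<-2$ for every $m\neq 0$; hence no coset other than $H_2(D)$ itself contains a $(-2)$-vector.

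\emph{Part (2).} I would compute $\Nbf_p^\vee$ directly. Since $H_2(D)\subseteq\Nbf_p$ we have $\Nbf_p^\vee\subseteq H_2(D)^\vee$, and because $\Nbf_p=H_2(D)+\Zb\varpi$, the only extra condition on $y\in H_2(D)^\vee$ is $y\cdot\varpi\in\Zb$. Writing the class of $y$ as $\sum_i l_i\varpi_i$ in $H_2(D)^\vee/H_2(D)$ and using $\varpi_i\cdot\varpi_{i'}=\delta_{ii'}\tfrac{1-p}{p}$ with $\varpi=\sum_i k_i\varpi_i$, this condition reads $\tfrac{1-p}{p}\sum_i k_il_i\in\Zb$, i.e. $\sum_i k_il_i\equiv0\pmod p$. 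Thus $\Nbf_p^\vee/H_2(D)=\{\sum_i l_i\varpi_i:\sum_i k_il_i\equiv0\}$, and passing to $\Nbf_p^\vee/\Nbf_p$ means quotienting further by $\langle\varpi\rangle$; this is precisely the asserted description, and it confirms that $\Nbf_p^\vee/\Nbf_p$ is an $\Fb_p$-space of dimension $\nu-2$.

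\emph{Part (3).} Let $G_0$ denote the group of isometries of $\Nbf_p$ acting trivially on the discriminant. For $W(R)\subseteq G_0$ I would use that $W(R)$ fixes every class of $H_2(D)^\vee/H_2(D)$: in particular it fixes $\bar\varpi$, hence preserves $\Nbf_p$, and it therefore acts trivially on the subquotient $\Nbf_p^\vee/\Nbf_p$ described in Part (2). For the reverse inclusion, any $\phi\in\Orth(\Nbf_p)$ permutes the $(-2)$-vectors, which by Part (1) are the roots $R$; since $R$ spans $\Nbf_p\otimes\Qb$, the map $\Orth(\Nbf_p)\to\Aut(R)$ is injective and its image contains $W(R)$, with $\Aut(R)=W(R)\rtimes(\text{diagram automorphisms})$. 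As $W(R)\subseteq G_0$, an element $\phi\in G_0$ lies in $W(R)$ exactly when its image $\delta$ in $\Aut(R)/W(R)$ is trivial; here $\delta$ acts on the generators $\bar\varpi_i$ as a signed permutation $\delta(\bar\varpi_i)=\epsilon_i\bar\varpi_{\sigma(i)}$, and preservation of $\langle\bar\varpi\rangle$ forces $\epsilon_ik_i\equiv ck_{\sigma(i)}\pmod p$ for some $c\in\Fb_p^\times$. So everything reduces to showing that no nontrivial such $\delta$ can act trivially on $\Nbf_p^\vee/\Nbf_p$.

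This last step is the main obstacle, and it is exactly where the specific rotation numbers $k_i$ enter. I would test $\delta$ against the classes $v_{ij}:=k_j\bar\varpi_i-k_i\bar\varpi_j$, which lie in $\Nbf_p^\vee$ since $k_ik_j-k_jk_i=0$: triviality on $\Nbf_p^\vee/\Nbf_p$ demands $\delta(v_{ij})\equiv v_{ij}\pmod{\langle\varpi\rangle}$ for all $i\neq j$. Using $\nu\geq 3$ and the constraint $\epsilon_ik_i\equiv ck_{\sigma(i)}$, together with the explicit values of Proposition~\ref{prop:Nikulinform}, I expect to see that a nontrivial $\sigma$ always produces some $\delta(v_{ij})-v_{ij}\notin\langle\varpi\rangle$ (this is what rules out even the permutations among equal-$k_i$ components, which do preserve $\langle\varpi\rangle$), and for $p$ odd the same testing eliminates the residual sign choices, in particular $\delta=-\mathrm{id}$, which acts as multiplication by $-1\neq 1$ on the nonzero $\Fb_p$-space $\Nbf_p^\vee/\Nbf_p$. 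For $p=2$ there are no signs and the direct check with $v_{ij}=\bar\varpi_i-\bar\varpi_j$ shows that a coordinate permutation fixing $\Nbf_2^\vee/\Nbf_2$ must fix every pair $\{i,j\}$, hence be trivial. In every case $\delta=\mathrm{id}$, so $\phi\in W(R)$ and $G_0=W(R)$.
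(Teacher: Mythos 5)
Your treatments of parts (1) and (2) are correct; the coset\--by\--coset minimal-norm computation in (1) is in fact a cleaner justification than the paper's ``one may check'', and (2) coincides with the paper's computation. For part (3) you make exactly the same reduction as the paper: $\Orth(\Nbf_p)$ preserves the root sublattice $H_2(D)$ by part (1), so modulo $W(R)$ everything comes down to showing that no nontrivial signed permutation $\delta$ of the classes $\bar\varpi_i$ that preserves $\langle\bar\varpi\rangle$ can act trivially on $H_k/\langle k\rangle$, where $H_k=\{l:\sum_i k_il_i=0\}$. But you never carry out that verification --- the sentence ``I expect to see that \dots'' is doing all the work --- and this is a genuine gap, not a routine omission, because the claim you are postponing is false for $p=5$ and $p=7$.

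Concretely, for $p=5$ with $k=(1,1,2,2)$ take $\delta(\bar\varpi_1)=-\bar\varpi_2$, $\delta(\bar\varpi_2)=-\bar\varpi_1$, $\delta(\bar\varpi_3)=-\bar\varpi_4$, $\delta(\bar\varpi_4)=-\bar\varpi_3$. Then $\delta(\bar\varpi)=-\bar\varpi$, so $\delta$ lifts to an isometry of $\Nbf_5$ (namely $D_{1,j}\leftrightarrow -D_{2,j}$, $D_{3,j}\leftrightarrow -D_{4,j}$); on the generators $\bar\varpi_1-\bar\varpi_2$, $\bar\varpi_3-\bar\varpi_4$, $2\bar\varpi_1-\bar\varpi_3$ of $H_k$ it fixes the first two and shifts the third by $(-2,-2,1,1)=3\bar\varpi$, so it is trivial on $\Nbf_5^\vee/\Nbf_5$ while interchanging components of $R$, hence is not in $W(R)$. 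Similarly for $p=7$ with $k=(1,2,3)$: the signed permutation $\bar\varpi_1\mapsto-\bar\varpi_3$, $\bar\varpi_2\mapsto\bar\varpi_1$, $\bar\varpi_3\mapsto-\bar\varpi_2$ sends $\bar\varpi$ to $2\bar\varpi$ and shifts the generator $\bar\varpi_1+3\bar\varpi_2$ of $H_k$ by $2\bar\varpi$, so again it is trivial on the discriminant but lies outside $W(R)$. (For $p=2,3$, where all $k_i$ are equal, your weight-count argument with the vectors $\bar\varpi_i-\bar\varpi_j$ does go through.) You should be aware that the paper's own proof has the identical lacuna: it asserts without verification that ``the group of permutations of the basis vectors of $\Fb_p^\nu$ up to sign acts faithfully on this quotient'', and that is precisely what the examples above contradict. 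So your proposal faithfully reproduces the paper's argument, including its unjustified key step; a correct statement along these lines would have to replace triviality on $\Nbf_p^\vee/\Nbf_p$ by triviality on the larger group $H_2(D)^\vee/H_2(D)$ (whose pointwise stabilizer really is $W(R)$), or otherwise supply extra hypotheses where the lemma is applied.
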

\begin{proof}
For  $(k_1, \dots, k_\nu)$ as above, we see
that $\varpi\cdot \varpi=(1-p)/p.\sum_i k_i^2$ is resp. $-4$, $-4$, $-8$, $-12$ and one may check that this does not introduce any new $(-2)$-vectors in $\Nbf_p$.
This shows that $R$ is as claimed. As to the other statement, we shall assume that $p$ is odd, leaving the case $p=2$ as an exercise (which amounts to a minor modification of the proof below).

The orthogonal group of $\Nbf_p$ is a semi-direct product of the Weyl group $W(R)$ and a 
permutation group of the collection $\{D_{i,j}\}_{i,j}$. This permutation group has as a subgroup the permutations which preserve each irreducible component.
That subgroup can then also be represented by the transformations which induce in each component plus or minus the identity (minus the identity is not in 
a Weyl group of type $A_{p-1}$ if $p>2$). Hence we can realize this group also as one which permutes the $\varpi_i$'s up to sign. Consider the inclusions
\[
H_2(D)\subset \Nbf_p\subset \Nbf_p^\vee\subset H_2(D)^\vee . 
\]
The discriminant group $H_2(D)^\vee/H^2(D)$ is an $\Fb_p$-vector space with basis $\{\varpi_i\}_{i=1}^\nu$ and $\varpi=\sum_i k_i\varpi_i$ defines an isotropic vector in  it that defines a generator of $ \Nbf_p/H_2(D)$. Since $\varpi_i\cdot \varpi_i=(1-p)/p$, it follows that
$\Nbf_p^\vee/H_2(D)$ is represented in terms of this basis by the  hyperplane of  $l=(l_1, \dots l_\nu)\in \Fb_p^\nu$ with $\sum_i l_ik_i=0$.  So
$\Nbf_p^\vee/\Nbf_p$ is identified with the quotient of this hyperplane by the span of $k=(k_1, \dots, k_\nu)$. The group of permutations of the basis vectors of $ \Fb_p^\nu$ up to sign acts faithfully on this quotient and thus the lemma follows.
\end{proof}

We now describe our candidate for $\Lbf_G$ in terms of the Nikulin lattice $\Nbf_p$. For this, let $\varrho\in H_2(D)^\vee$ be the {\em Weyl vector}, characterized by the property that it takes the value $1$ on each $D_{i,j}$.  So if we write this out on the basis $\{D_{i,j}\}_{i,j}$,  then 
\[
\varrho=\sum_{i=1}^\nu \varrho_i \text{  with   }  \varrho_i=-\tfrac{1}{2}\sum_{j=1}^{p-1} j\cdot(p-j) D_{i,j}.
\]
When $p$ is odd, this is in fact an element of $H_2(D)$. We compute
\begin{equation}\label{eqn:rhosquared}
\varrho\cdot \varrho=\sum_{i=1}^\nu \varrho_i\cdot\varrho_i=-\frac{\nu}{2}\sum_{j=1}^{p-1} j\cdot (p-j)=-\frac{\nu(p-1)p(p+1)}{2\cdot 6}=-2(p-1)p,
\end{equation}
where we used that $\nu(p+1)=24$. In particular, $\varrho\cdot \varrho\equiv0\pmod{p}$.

We  note that $\varrho_i\cdot \varpi_i$ is minus the coefficient of  $D_{i,p-1}$ in $\varrho_i$, which is $-\frac{1}{2}(p-1)$, and so
\begin{equation}\label{eqn:rhovalue}
\textstyle \varrho \cdot \varpi=\sum_{i=1}^\nu \varrho_i\cdot k_i\varpi_i=\frac{1}{2}(p-1)\sum_{i=1}^\nu  k_i.
\end{equation}

\subsection{A Leech type sublattice of the Nikulin lattice}

The following lattice will play a central role in the rest of this paper.

\begin{definition}[{\bf The lattice \boldmath$\Lbf_p$}]
\label{def:}
Let  $\Lbf_p\subset\Nbf_p$ to be  the sublattice of the vectors $x\in \Nbf_p$ on which $\varrho\cdot D_{i,j}\equiv 0\pmod{p}$ for all $i,j$.
\end{definition}

Note that by the above computation  $\varrho$ lies in $\Lbf_p$.
 We will see that $\Lbf_p$ is a lattice which possesses some remarkable properties. To illustrate this, let us observe that for $p=2$ this reproduces the lattice $\Ebf_8(-2)$. Indeed, 
$\Lbf_2(-\frac{1}{2})$ is by definition the set of $(x_1, \dots, x_8)\in(\frac{1}{2}\Zb)^8$ for which the each difference $x_i-x_j$ is integral and $\sum_i x_i$ is even and this is  one of the classical descriptions of  $\Ebf_8$ \cite{serre1970}.

The  Weyl group $W(R_i)$ contains an element $\sigma_i$ of order $p$ that takes $D_{i,j}$ to $D_{i,j+1}$ for $j=1, \dots, p-2$, and $D_{i,p-1}$ to $D_{i,0}:=-\sum_{j=1}^{p-1}D_{i,j}$, and $D_{i,0}$ back to $D_{i,1}$ (it is a Coxeter element). Let $\sigma=(\sigma_1^{k_1}, \dots, \sigma_\nu^{k_\nu})\in W(R)$. Any element of $W(R)$, so in particular $\sigma$, leaves invariant any overlattice of $H_2(D)$ such as $\Nbf_p$.

\begin{proposition}\label{prop:Lp}
The sublattice  $\Lbf_p$ of $\Nbf_p$ is of index $p$, and contains $\varrho$. It has also the following properties
\begin{enumerate}
\item $\Lbf_p$ is negative definite of rank $\nu(p-1)$ and contains no $(-2)$-vector(\footnote{In the terminology of Nikulin, $\Lbf_p$ is a lattice  of Leech-type.}). 
\item The discriminant group $\Lbf_p^\vee/\Lbf_p$ is an  $\Fb_p$-vector space of dimension $\nu$.
\item The group $C_p$ of automorphisms of $\Lbf_p$ that act  trivially on $\Lbf_p^\vee/\Lbf_p$ is cyclic of order $p$ (and generated by $\sigma$).
\end{enumerate}
\end{proposition}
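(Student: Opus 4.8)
The plan is to realize $\Lbf_p$ as the kernel of a single mod-$p$ linear functional on $\Nbf_p$ and to deduce every assertion from this description together with the lattice data already assembled. Since $\varrho\in\Nbf_p$ (indeed $\varrho\in H_2(D)$ for $p$ odd, while $\varrho=-\varpi$ for $p=2$) and $\Nbf_p$ is even, the map $\phi\colon\Nbf_p\to\Fb_p$, $\phi(x):=(x\cdot\varrho)\bmod p$, is well defined, and one checks directly that $\Lbf_p=\ker\phi$. It is a nonzero functional because $\phi(D_{i,j})=\varrho\cdot D_{i,j}=1$, so $\Lbf_p$ has index $p$ in $\Nbf_p$; and $\varrho\in\Lbf_p$ because $\varrho\cdot\varrho=-2(p-1)p\equiv0\pmod p$ by \eqref{eqn:rhosquared}. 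As a finite-index sublattice of the negative-definite lattice $\Nbf_p$ of rank $\nu(p-1)$, the lattice $\Lbf_p$ is negative definite of the same rank. For the no-root statement in (1) I would invoke Lemma~\ref{lemma:nikulinlattice}(1): every $(-2)$-vector of $\Nbf_p$ is a root of some component $R_i$ of type $A_{p-1}$, hence up to sign a consecutive sum $D_{i,a}+\cdots+D_{i,b}$; pairing with $\varrho$ gives $b-a+1\in\{1,\dots,p-1\}$, which is never $0$ in $\Fb_p$. Thus no root lies in $\ker\phi=\Lbf_p$.

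For (2) the plan is to compute the dual lattice. Since $\varrho\in\Nbf_p$, for $x\in\Lbf_p$ one has $\tfrac1p\varrho\cdot x=\tfrac1p(\varrho\cdot x)\in\Zb$, so $\tfrac1p\varrho\in\Lbf_p^\vee$; as $[\Lbf_p^\vee:\Nbf_p^\vee]=[\Nbf_p:\Lbf_p]=p$ and $\tfrac1p\varrho\notin\Nbf_p^\vee$ (otherwise $\phi\equiv0$), we get $\Lbf_p^\vee=\Nbf_p^\vee+\Zb\,\tfrac1p\varrho$. I would then verify $p\Lbf_p^\vee\subseteq\Lbf_p$: for $y\in\Nbf_p^\vee$ one has $py\in\Nbf_p$ and $(py)\cdot\varrho=p(y\cdot\varrho)\equiv0$, while $p\cdot\tfrac1p\varrho=\varrho\in\Lbf_p$. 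Hence $\Lbf_p^\vee/\Lbf_p$ is an $\Fb_p$-vector space, and since $|\det\Lbf_p|=[\Nbf_p:\Lbf_p]^2\,|\det\Nbf_p|=p^2\cdot p^{\nu-2}=p^{\nu}$ (recall $\Nbf_p^\vee/\Nbf_p\cong\Fb_p^{\nu-2}$), its dimension is $\nu$.

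For the membership half of (3), I would show $\sigma\in C_p$. Since $\sigma\in W(R)$ it preserves $\Nbf_p$ and acts trivially on $\Nbf_p^\vee/\Nbf_p$ (Weyl groups act trivially on root-lattice discriminants). The computation $\sigma_i\varrho_i\cdot D_{i,j}=\varrho_i\cdot\sigma_i^{-1}D_{i,j}=1$ for $j\ge2$ and $=\varrho_i\cdot D_{i,0}=-(p-1)\equiv1\pmod p$ for $j=1$ shows that $\sigma^{-1}\varrho\equiv\varrho$ as functionals mod $p$; hence $\sigma$ preserves $\Lbf_p=\ker\phi$ and fixes $\Nbf_p/\Lbf_p$. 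Triviality on the rest of $\Lbf_p^\vee/\Lbf_p$ then reduces to the inclusions $(\sigma-1)\Nbf_p^\vee\subseteq\Lbf_p$ and $(\sigma-1)\tfrac1p\varrho\in\Lbf_p$, which I would establish by direct evaluation: the class of $(\sigma-1)\tfrac1p\varrho$ in $H_2(D)^\vee/H_2(D)$ works out to $\pm\varpi$ (so it lies in $\Nbf_p$), and $\phi\big((\sigma-1)\tfrac1p\varrho\big)\equiv-\tfrac12\sum_i k_i^2\equiv0\pmod p$, the vanishing being exactly the congruence $\sum_i k_i^2\equiv0$ from Proposition~\ref{prop:Nikulinform}. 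As $\sigma\neq1$ has order $p$, this gives $\langle\sigma\rangle\cong\Zb/p\subseteq C_p$.

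For the reverse inclusion, take $g\in C_p$. Fixing $\Lbf_p^\vee/\Lbf_p$ pointwise, $g$ preserves the subgroups $\Nbf_p/\Lbf_p$ and $\Nbf_p^\vee/\Lbf_p$, hence preserves $\Nbf_p$ and is trivial on $\Nbf_p^\vee/\Nbf_p$; by Lemma~\ref{lemma:nikulinlattice}(3) this places $g$ in $W(R)=\prod_i W(A_{p-1})$, say $g=(g_i)$ with $g_i\in W(A_{p-1})\cong\Sc_p$. Triviality on $\Nbf_p/\Lbf_p$ forces $g\varrho\equiv\varrho$ as functionals mod $p$ on each $H_2(D_i)$, i.e.\ $\varrho_i\cdot g_i^{-1}D_{i,j}\equiv1$ for all $j$; since pairing with $\varrho_i$ computes the height of a root, $g_i^{-1}$ must carry each element of the affine root set $\{D_{i,1},\dots,D_{i,p-1},D_{i,0}\}$ to one of height $\equiv1$, i.e.\ act as a rotation of the affine $A_{p-1}$ cycle, whence $g_i\in\langle\sigma_i\rangle$, say $g_i=\sigma_i^{m_i}$. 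Finally, $(g-1)\tfrac1p\varrho\in\Lbf_p\subseteq\Nbf_p$ forces its class $\sum_i m_i\varpi_i$ to be a multiple of $\varpi=\sum_i k_i\varpi_i$ in $H_2(D)^\vee/H_2(D)$, so $m_i\equiv t k_i\pmod p$ for a common $t$ and $g=\sigma^{t}$. I expect this last step — the affine-diagram identification pinning each $g_i$ to $\langle\sigma_i\rangle$, together with the $\varpi$-gluing of the exponents and the parallel mod-$p$ verifications on the $\Nbf_p^\vee$-part needed to confirm $\sigma\in C_p$ — to be the main technical obstacle; the remaining assertions are routine lattice bookkeeping.
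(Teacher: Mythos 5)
Your proposal is correct and follows essentially the same route as the paper's proof: the same description of $\Lbf_p$ as the kernel of $x\mapsto \varrho\cdot x \bmod p$, the same discriminant count via $\Lbf_p^\vee=\Nbf_p^\vee+\Zb\tfrac1p\varrho$ and $p\Lbf_p^\vee\subseteq\Lbf_p$, the same use of the congruence $\sum_i k_i^2\equiv 0\pmod p$ for $(\sigma-1)\tfrac1p\varrho$, and the same reduction of part (3) to Lemma~\ref{lemma:nikulinlattice} followed by identifying the affine simple roots $\{D_{i,j}\}_{j=0}^{p-1}$ as the roots on which $\varrho$ is $\equiv 1\pmod p$. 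When writing it up, make explicit that the congruence $\sigma^{-1}\varrho\equiv\varrho$ must also be checked against $\varpi$ (not just on $H_2(D)$), and that a Weyl-group element permuting the $p$-gon of affine simple roots is necessarily a rotation because the orientation-reversing symmetries of that $p$-gon are outer automorphisms of $A_{p-1}$ (the paper's own remark); your final step, pinning $(m_i)$ to a multiple of $(k_i)$ by computing the class of $(g-1)\tfrac1p\varrho$ in $H_2(D)^\vee/H_2(D)$ rather than by pairing $(g-1)\varpi(l)$ with $\varrho$ over all admissible $l$, is a harmless and slightly slicker variant of the paper's argument.
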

\begin{proof} The proof is in three steps. 
\\

\noindent
\emph{Step 1: The sublattice  $\Lbf_p$ of $\Nbf_p$ is of index $p$, contains $\varrho$, but no $(-2)$-vector and  $\Lbf_p^\vee/\Lbf_p$ is an  $\Fb_p$-vector space of dimension $\nu$.}

\medskip
It is clear that $x\in \Nbf_p\to \varrho\cdot x\in \Zb$ is onto and so $\Lbf_p$ is a sublattice of $\Nbf_p$ of index $p$ as claimed. 
Hence $|\Lbf_p^\vee/\Lbf_p|=p^2\cdot |\Nbf_p^\vee/\Nbf_p|=p^\nu$. The assertion that $\Lbf_p^\vee/\Lbf_p$ is an $\Fb_p$-vector space then amounts to the property that $p\Lbf_p^\vee\subset \Lbf_p$. Since
$\frac{1}{p}\varrho$ maps to a generator of $\Lbf_p^\vee/\Nbf_p^\vee$, it follows that 
$p\Lbf_p^\vee=p\Nbf_p^\vee+\Zb \varrho$. The latter is a subgroup of $\Nbf_p$ on which $\varrho$ takes values in $\Zb p$, hence is contained in $\Lbf_p$.  

We now check that $\Lbf_p$ contains no $(-2)$-vectors. Since we already know that all the $(-2)$-vectors in $\Nbf_p$ lie in $H_2(D)$, 
it suffices to show that $\varrho$ takes on every root in $H_2(D)$ a value that is not divisible by $p$. To this end, we  note that every positive root in $H_2(D_i)$ is of the form $\sum_{j=r}^{s} D_{i,j}$ with $1\le r<s\le p-1$; on such a root  $\varrho$ takes the value $r-s$ and this is never divisible by $p$.
\\

\noindent
\emph{Step 2: $\sigma$ preserves  $\Lbf_p$ and acts trivially on  $\Lbf_p^\vee/\Lbf_p$.}

\medskip 
We must show that $\sigma -1$ takes $\Lbf_p^\vee$ to $\Lbf_p$. We again use that  $\Lbf_p^\vee=\Nbf_p^\vee+\frac{1}{p}\Zb \varrho$ and that $\Nbf_p^\vee/H_2(D)$ is represented by the $\Zb$-linear combinations $\sum_i l_i\varpi_i$ for which $\sum_i k_il_i\equiv 0\pmod{p}$. We therefore first check this on $H_2(D)$ (regarded as a sublattice of $\Nbf_p^\vee$). 

For this we note that  $\sigma (D_{i,j})-D_{i,j}$ (with $j=1, \dots, p-1$) is of the form
$D_{i, j'}-D_{i,j}$ where $j'$ can also take the value $0$. Its  inner product with $\varrho$ is therefore zero or $-p$, so that this is an element of $\Lbf_p$.

Next we check this for $\sum_i l_i\varpi_i$ with  $\sum_i k_il_i\equiv 0\pmod{p}$. We compute that 
\[
\sigma_i^{k_i}(\varpi_i)=\varpi_i-\sum_{j=k_i}^{p-1}D_{i,j}
\]
and so
\[
\sigma(\sum_i l_i\varpi_i)-\sum_i l_i\varpi_i= -\sum_{i=1}^\nu l_i\sum_{j=k_i}^{p-1}D_{i,j}.
\]
To test whether the right hand side lies in $\Lbf_p$, we take its inner product with $\varrho$. This gives $-\sum_i l_i(p-k_i)=\sum_i k_il_i-p\sum_i l_i$ and this is indeed a multiple of $p$. This proves that $\sigma-1$ takes $\Nbf_p^\vee$ to $\Lbf_p$.

We finally show that $(\sigma-1)(\frac{1}{p}\varrho)\in \Lbf_p$.
A straightforward  computation shows that
\[
\textstyle (\sigma -1)(\frac{1}{p}\varrho_i)= (\sigma_i^{k_i} -1)(\frac{1}{p}\varrho_i)=k_i\varpi_i-\sum_{j=1}^{k_i} jD_{i,p-j}
\] 
so that  
\[
\textstyle (\sigma-1)(\frac{1}{p}\varrho)= \varpi -\sum_{i=1}^\nu \sum_{j=1}^{k_i}jD_{i,p-j}.
\]
Using Equation \ref{eqn:rhovalue} we see that the value of $\varrho$ on the right-hand side is 
\begin{equation}
\label{eq:sumdiv1}
\sum_{i=1}^\nu \tfrac{1}{2}k_i(p-1)-\sum_{i=1}^\nu  \tfrac{1}{2} k_i(k_i-1)=-\tfrac{1}{2}\sum_{i=1}^\nu k_i(k_i-p).
\end{equation}
If $p=2$ then the right-hand side of \eqref{eq:sumdiv1} is $-\tfrac{1}{2}\cdot 8=-4$, which is indeed even; 
if $p$ is odd then the fact that $\sum_i k_i^2$ is divisible by $p$ implies that the right-hand side of \eqref{eq:sumdiv1} is also divisible by $p$. So in all cases, $(\sigma-1)(\frac{1}{p}\varrho)\in \Lbf_p$.
\\

\noindent
\emph{Step 3: Any automorphism of $\Lbf_p$ that acts  trivially on $\Lbf_p^\vee/\Lbf_p$ is a power of $\sigma$.}

\medskip

First note that the $W(R_i)$-orbit of $\varpi_i$  is the $p$-element set  $\{\varpi_i-\sum_{j=p-r}^{p-1}D_{i,j}\}_{r=0}^{p-1}$ and
that this realizes $W(R_i)$ as the full permutation group of that orbit. By taking the inner product with $\varrho_i$ and then reducing modulo $p$, this orbit gets identified with $\Fb_p$. We thus faithfully represent $W(R)$ as a permutation representation of  $\Fb_p^\nu$.

Let $g\in \Orth(\Lbf_p)$ act trivially on $\Lbf_p^\vee/\Lbf_p$. Since $\Nbf_p$ is an overlattice of $\Lbf_p$, it then follows that $g$ preserves $\Nbf_p$ and acts trivially on $\Nbf_p^\vee/\Nbf_p$. According to Lemma \ref{lemma:nikulinlattice}, $g$ will then be an element of the Weyl group $W(R)$.
This element must preserve $H_2(D_i)\cap \Lbf_p$. Now an element of $R_i$ (the set $(-2)$-vectors  in $H_2(D_i)$) is up to sign of the form
$\sum_{j=r}^s D_{i,j}$ for some $1\le r<s\le p-1$ and on that vector $\varrho$ takes the value $r-s$. This shows that  the elements of $R_i$ on which $\varrho$ takes a value $1\pmod{p}$ is  $\{D_{i,j}\}_{j=0}^{p-1}$. It is clear that $g$ must permute these roots. Notice that the 
``Dynkin diagram'' of $\{D_{i,j}\}_{j=0}^{p-1}$ (given by their inner products) is that of a $p$-gon.  Hence $g$ acts on this $p$-gon as a rotation: it cannot reverse its orientation because such a transformation will not be in the Weyl group $W(R_i)$ and so it will be a rotation. This means that $g$ acts in $H_2(D_i)$ as some power $\sigma^{n_i}_i$ of the Coxeter transformation $\sigma_i$.
 
Let $l=(l_1, \dots ,l_\nu)\in \Zb^\nu$ be such that $\sum_i k_il_i\equiv 0\pmod{p}$, so that $\varpi(l):=\sum_i l_i\varpi_i\in \Lbf_p^\vee$. We are given that $(g-1)\varpi(l)\in \Lbf_p$. In  particular, $\rho\cdot \big((g-1)\varpi(l)\big)\equiv 0 \pmod{p}$. 
Now $g(\varpi_i)-\varpi_i=\sigma_i^{n_i}(\varpi_i)-\varpi_i= -\sum_{j=n_i}^{p-1}D_{i,j}$ and so
\[
0\equiv \rho\cdot \big((g-1)\varpi(l))=\sum_{i=1}^\nu l_i\rho_i\cdot(\sum_{j=n_i}^{p-1}-D_{i,j})=\sum_{i=1}^\nu -l_i(p-n_i)\equiv\sum_{i=1}^\nu l_in_i\pmod{p}.
\]
As this must be true for all $l=(l_1, \dots ,l_\nu)\in \Zb^\nu$ with $\sum_il_i\equiv 0\pmod{p}$, it follows that 
$(n_1, \dots, n_\nu)$  is proportional to $(k_1, \dots , k_\nu)$.  This proves that $g=\sigma^{n_1}$.
\end{proof}

The lattices $\Lbf_p$ appear in the literature implicitly beginning with the work of Nikulin. Although he does not exhibit the lattices themselves, he  proves a uniqueness 
assertion which we  state as follows.

\begin{proposition}[Nikulin \cite{nikulin1979}, Thm.\ 4.7]\label{prop:unique}
Let $\Lbf$ be a negative-definite lattice without $(-2)$-vectors which admits an orthogonal automorphism of prime order $p$ that acts trivially on 
$\Lbf^\vee/\Lbf$. If $\Lbf$ admits an embedding in the K3 lattice $\LL$ then $p\in \{2,3,5,7\}$ and $\Lbf\cong \Lbf_p$. 
\end{proposition}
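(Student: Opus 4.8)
The plan is to transport this purely lattice-theoretic statement into the geometric setting developed in the paper: glue $\sigma$ to an isometry of $\LL$, realize it as a symplectic K3 automorphism via the Torelli theorem, extract the bound $p\le 7$ from the fixed-point topology of Section~\ref{section:prime:order}, and finally pin down the isometry class using the discriminant-form computations already carried out for the Nikulin lattice $\Nbf_p$ and for $\Lbf_p$.

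First I would set up the algebra. Write $\sigma$ for the given order-$p$ automorphism and put $T:=\Lbf^\perp\cap\LL$. Since $\LL$ is unimodular, the inclusion $\Lbf\oplus T\subset\LL$ realizes $\LL/(\Lbf\oplus T)$ as the graph of an isomorphism $\Lbf^\vee/\Lbf\xrightarrow{\sim}T^\vee/T$; as $\sigma$ acts trivially on $\Lbf^\vee/\Lbf$ by hypothesis, it glues with the identity on $T$ to an isometry $g\in\Orth(\LL)$. Because $T$ has signature $(3,\,19-\rank\Lbf)$ and $g$ fixes $T$ pointwise, $g$ fixes a positive-definite $3$-plane and lies in $\Orth^+(\LL)$. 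A discriminant-length count (using $\Lbf^\vee/\Lbf\cong T^\vee/T$, whose length is at most $\rank T=22-\rank\Lbf$, together with the fact that a nonzero $(-2)$-free negative-definite lattice of rank $\le 19$ is never unimodular) forces $\sigma$ to have no nonzero fixed vectors, so that $\Lbf$ is the full co-invariant lattice $\Lbf_{\langle g\rangle}$, $\Lbf_\Qb$ is free over $\Qb(\zeta_p)$ of some rank $\nu$, and $\rank\Lbf=\nu(p-1)$; the triviality-on-discriminant hypothesis also makes $\Lbf^\vee/\Lbf$ an $\Fb_p$-vector space, as in Lemma~\ref{lemma:nocyclotomic}.

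Next comes the crux, the bound on $p$. Since $\Lbf_{\langle g\rangle}=\Lbf$ contains no $(-2)$-vector and the trivial representation occurs in $\Lbf^\perp=\LL^{\langle g\rangle}$, Theorem~\ref{thm:K3examples}(2) realizes $\langle g\rangle$ as a group of symplectic automorphisms of a complex K3 surface inducing the given action on $\LL$. I would then run the fixed-point analysis of Section~\ref{section:prime:order}: by Chen--Kwasik \cite{CK} the fixed set is finite with each tangent action in $\SL_2(\Cb)$, so $H_2$ has no cyclotomic summands (Proposition~\ref{prop:ed}), and combining the Lefschetz and $G$-signature identities of Proposition~\ref{prop:fixtop1} with the Kronheimer--Mrowka adjunction inequality (as in Step~2 of Theorem~\ref{thm:dich}, and via Proposition~\ref{proposition:involution} when $p=2$) yields $\nu(p+1)\le 24$ with equality. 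As $R_G=\emptyset$ we are in the Nikulin-type case, so $(p,\nu)\in\{(2,8),(3,6),(5,4),(7,3)\}$ and in particular $p\in\{2,3,5,7\}$. I expect this to be the main obstacle: the inequality $p\le 7$ is not formal lattice theory but rests on the holomorphic/topological fixed-point input (the $G$-signature theorem and the adjunction inequality), which is exactly why the geometric realization step is indispensable.

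Finally I would identify $\Lbf$ with $\Lbf_p$. Having the $\SL_2(\Cb)$-model at each of the $\nu$ fixed points, I form the resolved orbit space of Corollary~\ref{cor:resorbitspace} and invoke Propositions~\ref{prop:class}--\ref{prop:Nikulinform}, which compute the discriminant form of $\LL^{\langle g\rangle}=\Lbf^\perp$ explicitly through $\Nbf_p$ and, by Corollary~1.13.3 of \cite{nikulin1980}, determine its isometry class; the rank hypothesis there is met because the relevant discriminant group has only $\nu-2$ generators while $H_2(M'\ssm D)$ has rank $2\nu-2$. Since $\Lbf$ is the orthogonal complement of $\Lbf^\perp$ in the unimodular lattice $\LL$, and primitive embeddings of $\Lbf^\perp$ into $\LL$ are unique up to $\Orth(\LL)$, the isometry class of $\Lbf$ is determined; as $\Lbf_p$ arises in precisely this way, $\Lbf\cong\Lbf_p$. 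Routing the uniqueness through the complement $\Lbf^\perp$, rather than through the definite lattice $\Lbf$ itself (whose discriminant has $\nu$ generators, so that Corollary~1.13.3 does not apply directly), is what makes the argument run uniformly and covers the delicate cases $p=2$ and $p=7$.
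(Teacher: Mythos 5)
The serious gap is in your final step, the identification $\Lbf\cong\Lbf_p$. Your geometric machinery (Propositions \ref{prop:class}--\ref{prop:Nikulinform} together with Corollary 1.13.3 of \cite{nikulin1980}) pins down the isometry class of the \emph{indefinite} lattice $T=\Lbf^\perp\cong\LL^{\langle g\rangle}$, hence the signature and discriminant form of $\Lbf$ --- that is, its genus --- but nothing more. For a negative-definite lattice the genus does not determine the isometry class, and here it provably fails to: Remark~\ref{rem:nopush} records that for odd $p$ the lattice $\Abf_{p-1}^{\oplus\nu}$ lies in the same genus as $\Lbf_p$, yet it is generated by $(-2)$-vectors while $\Lbf_p$ contains none. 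So every invariant you compute is blind to exactly the distinction you need, and the hypothesis ``no $(-2)$-vectors'' must enter through a finer tool (Nikulin's original argument, or Hashimoto's \cite{Ha} via Niemeier lattices). The step you lean on instead --- ``primitive embeddings of $\Lbf^\perp$ into $\LL$ are unique up to $\Orth(\LL)$'' --- is not available from Nikulin's embedding theorems: those require strict signature inequalities (equivalently, an \emph{indefinite} orthogonal complement), and here the complement is the definite lattice $\Lbf$ itself, with $s_+=l_+=3$. Worse, that uniqueness claim is essentially equivalent to the statement being proved: if the genus of $\Lbf_p$ contained a second $(-2)$-free class primitively embedding with complement $T$, the uniqueness would fail. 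Only $p=2$ closes by your route, via the unimodularity of $\Lbf(\tfrac12)$ as in Corollary~\ref{cor:nikulininvolution}. This is consistent with the fact that the paper does not prove Proposition~\ref{prop:unique} at all: it imports it from Nikulin.

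A second, smaller problem: the ``discriminant-length count'' you invoke to force $\Lbf^\sigma=0$ cannot work. Consider $\Lbf=\Ebf_8(-2)\oplus\Abf_1(-2)$ with the order-two automorphism $(-1)\oplus(+1)$: it is negative definite with no $(-2)$-vectors, the automorphism acts trivially on the discriminant group $(\Ebf_8\otimes\Fb_2)\oplus\Zb/4$, and it embeds primitively in $\LL$ (take the diagonal copy of $\Ebf_8(-2)$ in $\Ebf_8(-1)^{\oplus 2}$ together with $e-2f$ in a hyperbolic summand $\Ubf$). Its rank is $9$, so it is not $\Lbf_2$. Hence the condition that $\sigma$ have no nonzero fixed vectors --- equivalently, that $\Lbf$ be the full co-invariant lattice, as it is in Nikulin's formulation --- is an implicit hypothesis that must be assumed, not derived. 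Granting that hypothesis, the middle of your argument is sound and rather nice: gluing $\sigma$ to $g\in\Orth^+(\LL)$, realizing $\langle g\rangle$ holomorphically via Theorem~\ref{thm:K3examples}(2), and extracting $\nu(p+1)=24$ together with $\nu(p-1)\le 19$ (hence $p\in\{2,3,5,7\}$) from Chen--Kwasik \cite{CK}, Edmonds \cite{Ed}, the $G$-signature theorem and the adjunction inequality \cite{KM} is a legitimate way to obtain the prime bound from the paper's toolkit; it is the lattice classification, not the bound on $p$, that your proposal does not reach.
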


An alternative proof of Theorem~\ref{prop:unique} using Niemeier lattices was later given by Hashimoto \cite{Ha}. The lattices themselves have been exhibited by
Morrison \cite{Mor} and Van Geemen-Sarti \cite{vGS} for $p=2$ and by Garbagnati-Sarti \cite{GS} for the odd primes in a case-by-case manner (they  denote the lattice $\Lbf_p$ by  $\Omega_p$).

The   $\Lbf_p$ comes with the structure of a rank $\nu$ module over $\Zb[C_p]$.  Since the co-augmentation acts trivially, this factors through a  $\Zb(C_p)$-module structure.   The $C_p$-invariance of the symmetric bilinear form on $\Lbf_p$ implies that the  map 
$h:\Lbf_p\times\Lbf_p\to  \Zb[C_p]$ defined by 
\[h(u,u'):=\sum_{g\in C_p} (u\cdot gu' ) g\] 
is a hermitian form, by which we mean that $h$ is 
$\Zb[C_p]$-linear in the first variable and that $\overline{h(u,u')}=h(u',u)$, where the bar is the anti-involution in $\Zb[C_p]$ induced by  the inversion $g\in C_p\mapsto g^{-1}\in C_p$. If we compose $h$ with the augmentation we get 
$\sum_{g\in C_p} (u\cdot gu' )=0$ and so $h$ takes its values in the augmentation ideal. 
For that reason, we may as well  compose  $h$ with the projection $\Zb[C_p]\to \Zb(C_p)$. We thus recover for odd $p$ (up to scalar)  the hermitian modules found by Garbagnati-Sarti \cite{GS} (they also identify $\Lbf_3$ as the Coxeter-Todd lattice). {\it A priori} the  group $\Orth(\Lbf_p)$  will for odd $p$ only induce semi-linear transformations for this structure (meaning that they can induce a nontrivial Galois automorphism of $\Zb(C_p)$);  these will all be linear precisely when $C_p$ is central in $\Orth(\Lbf_p)$.

Theorem 4.1 of \cite{GS} states that 
\begin{description}
\item[$p=3$]  implies $\LL^G\cong \Ubf\oplus\Ubf(3)^{\oplus 2}\oplus\Abf_2(-1)^{\oplus 2}$,
\item[$p=5$] implies $\LL^G\cong \Ubf\oplus\Ubf(5)^{\oplus 2}$, 
\item[$p=7$] implies $\LL^G\cong \Ubf(7)\oplus (\begin{smallmatrix} 2 & 1\\ 1 & 4\end{smallmatrix})$ (the second summand can be identified with the orthogonal complement of an embedding of $\Abf_6$ in $\Ebf_8$).
\end{description}

\begin{remark}\label{rem:nopush}
Recall that the $\nu$-dimensional $\Fb_p$-vector space $\Lbf_p^\vee/\Lbf_p$ comes with a nondegenerate quadratic form.   There is an evident homomorphism $\Orth(\Lbf_p)\to \Orth(\Lbf_p^\vee/\Lbf_p)$, which by Proposition  \ref{prop:Lp} has as kernel the group generated by $\sigma$.  We will denote this group by $C_p$.  
For $p=2$ this map is onto: we already noticed that $\Lbf_2\cong \Ebf_8(-2)$ so that
$\Lbf_2^\vee/\Lbf_2\cong \Ebf_8\otimes\Fb_2$, and it is well-known that the orthogonal group of the latter is the Weyl group of $\Ebf_8$ modulo  its center (see for example Bourbaki, \emph{Groupes et Alg\`ebres de Lie}, 
Ch.\ 6, Exercice \S 4, 1). This also implies that the discriminant form on $\Lbf_2$ is different than that of $\Abf_1(-2)^{\oplus 8}$. 
On the other hand, one can show that for $p$ odd,  $\Lbf_p$ has the same genus as $\Abf_{p-1}^{\oplus \nu}$. This is one reason why we are not able to push the classification as far as for $p=2$. 
\end{remark}

\subsection{The case $p=2$}

Using what we have proved so far in this section, we can now prove Theorem~\ref{theorem:nikulin:type}.

The following corollary illustrates the power of Proposition \ref{prop:Nikulinform}.

\begin{corollary}\label{cor:nikulininvolution}
In case $p=2$, we have $\Lbf_G\cong \Ebf_8(-2)$. In particular, $\Lbf_G$ contains no $(-2)$-vectors. In fact, $G$ then defines an involution of Nikulin type in the sense of Definition \ref{def:nikulintype}: there exists an isomorphism 
$\LL\cong \Ubf^{\oplus 3}\oplus\Ebf_8(-1)$ via which $G$ acts by exchanging the $\Ebf_8(-1)$-summands.
\end{corollary}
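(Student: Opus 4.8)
The plan is to feed the $p=2$ output of Proposition~\ref{prop:Nikulinform} into the rigidity of $2$-elementary lattices. First I would assemble the input. As $G$ has order $2$ and fixes a positive-definite $3$-plane pointwise, Proposition~\ref{proposition:involution} gives $\nu=8$: the fixed set $M^G$ consists of $8$ points, and $H_2(M)$ has exactly $8$ regular summands, $6$ trivial summands and no cyclotomic summand as a $\Zb[G]$-module. The $\SL_2(\Cb)$-model hypothesis is then automatic, so Proposition~\ref{prop:Nikulinform} applies with $(k_1,\dots,k_8)=(1,\dots,1)$ and determines the isometry type of $\LL^G$. For $p=2$ this reads $\LL^G\cong\Ubf^{\oplus3}\oplus\Ebf_8(-2)$, an even lattice of signature $(3,11)$ that is $2$-elementary of length $8$; being indefinite, it is unique in its genus, so this identification is unconditional.

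Next I would split off the unimodular part. Since $\Ubf^{\oplus3}$ is a unimodular direct summand of $\LL^G$, it is primitive in $\LL$ and splits off: $\LL\cong\Ubf^{\oplus3}\oplus K$, where $K:=(\Ubf^{\oplus3})^\perp$ is even, unimodular and negative definite of rank $16$. By Eichler's uniqueness of the embedding of a hyperbolic plane, applied three times, $K\cong\Ebf_8(-1)^{\oplus2}$ (and not $\Dbf_{16}^+(-1)$). The involution $G$ acts as the identity on $\Ubf^{\oplus3}$, so it restricts to an involution of $K$ whose invariant lattice is $K^{G}=K\cap\LL^G\cong\Ebf_8(-2)$ and whose anti-invariant lattice is $\Lbf_G$.

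It then remains to pin down this involution of $K\cong\Ebf_8(-1)^{\oplus2}$. The obvious swap $\tau$ of the two summands has invariant lattice the diagonal, which is isometric to $\Ebf_8(-2)$, so $\tau$ has the same invariant lattice as $G|_K$. An involution of an even unimodular lattice is equivalent to a primitive embedding of its invariant lattice together with a gluing anti-isometry of discriminant forms; since the discriminant form here is $2$-elementary, such an involution exists and is determined up to conjugacy by the isometry type of the invariant lattice, provided the natural map $\Orth\big(\Ebf_8(-2)\big)\to\Orth\big(\Ebf_8(-2)^\vee/\Ebf_8(-2)\big)$ is surjective. That surjectivity is exactly the content of Remark~\ref{rem:nopush} (using $\Lbf_2\cong\Ebf_8(-2)$). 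Hence $G|_K$ is conjugate to $\tau$, which produces an isomorphism $\LL\cong\Ubf^{\oplus3}\oplus\Ebf_8(-1)^{\oplus2}$ carrying $G$ to the swap, and forces $\Lbf_G\cong\Ebf_8(-2)$ (so $\Lbf_G\cong\Lbf_2$); as $\Ebf_8(-2)$ has minimum $-4$, it contains no $(-2)$-vectors. Finally, $M^G$ has $8$ points and the resolved orbit space $M'$ is homeomorphic to a K3 by Proposition~\ref{prop:class}, so $G$ is of Nikulin type in the sense of Definition~\ref{def:nikulintype} and acts on $\LL$ as a Nikulin involution.

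The main obstacle is this final uniqueness: because $\Lbf_G$ has rank equal to the length of its discriminant group, the primitive-embedding criterion used for $\LL^G$ (Nikulin, \cite{nikulin1980}, Cor.\ 1.13.3) does not by itself separate $\Ebf_8(-2)$ from the odd-type $2$-elementary lattice of the same rank and discriminant group. Passing to the involution of the unimodular lattice $K$ and invoking the surjectivity of Remark~\ref{rem:nopush} is precisely what resolves this; everything else (the signatures, the shape of $\tau$, and the splitting of $\Ubf^{\oplus3}$) is routine.
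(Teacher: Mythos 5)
Your identification of $\LL^G\cong\Ubf^{\oplus3}\oplus\Ebf_8(-2)$ agrees with the paper's, but the route you take from there has a genuine gap. The step ``by Eichler's uniqueness of the embedding of a hyperbolic plane, applied three times, $K\cong\Ebf_8(-1)^{\oplus2}$ (and not $\Dbf_{16}^+(-1)$)'' fails at the third application: after splitting off two hyperbolic summands you are embedding $\Ubf$ into an even unimodular lattice of signature $(1,17)$, and the orthogonal complement is then an even unimodular \emph{negative definite} lattice of rank $16$, of which there are two isometry classes. Both actually occur as complements of a $\Ubf^{\oplus 3}$ in $\LL$, since $\Ubf^{\oplus3}\oplus\Dbf_{16}^+(-1)$ is even unimodular of signature $(3,19)$ and hence isomorphic to $\LL$; the two embeddings with complements $\Ebf_8(-1)^{\oplus2}$ and $\Dbf_{16}^+(-1)$ are therefore inequivalent, and uniqueness of hyperbolic summands simply does not hold once the complement becomes definite. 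As written, your argument does not exclude $K\cong\Dbf_{16}^+(-1)$, and everything after this point depends on $K$ being $\Ebf_8(-1)^{\oplus2}$.

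There is a second, related problem: you assert that an involution of the even unimodular lattice $K$ is determined up to conjugacy by the isometry type of its invariant lattice once $\Orth(\Ebf_8(-2))\to\Orth(\Ebf_8(-2)^\vee/\Ebf_8(-2))$ is onto. That surjectivity (Remark~\ref{rem:nopush}) only handles the gluing; Nikulin's classification of primitive embeddings also requires the orthogonal complement of the invariant lattice --- here the anti-invariant lattice $\Lbf_G$, negative definite of rank $8$ with discriminant group $(\Zb/2)^8$ --- to be unique in its genus. That is exactly the rank-equals-length difficulty you flag in your final paragraph, so your proposed resolution presupposes the point it is meant to settle. The paper avoids both issues: it first pins down $\Lbf_G\cong\Ebf_8(-2)$ directly, by noting that $\Lbf_G(\tfrac12)$ is unimodular negative definite of rank $8$ and that the odd alternative is excluded by the discriminant form, and then applies Nikulin's uniqueness to the primitive embedding $\Lbf_G\hookrightarrow\LL$ itself, where the complement is \emph{indefinite} of rank $14\ge 2+8$, so uniqueness is automatic and the pair $\LL^G\oplus\Lbf_G\subset\LL$ is forced into the diagonal/antidiagonal model. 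If you insert the rescaling argument for $\Lbf_G$ and then classify the embedding into $\LL$ rather than into $K$, your proof goes through.
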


\begin{proof}
We found that the Nikulin lattice $\Nbf_2$ has a discriminant group an $\Fb_2$-vector space of dimension $6$. It is not hard to check that its discriminant form is that of 
$\Ubf(2)^{\oplus 3}$. It follows from Nikulin's uniqueness theorem that in the situation of Propositions \ref{prop:Nsequence} and  \ref{prop:Nikulinform},  the orthogonal complement $H_2(M'\ssm D)$ of $\Nbf_2$ in $H_2(M')$ is isomorphic to
$\Ubf(2)^{\oplus 3}\oplus \Ebf_8(-1)$. The dual of this lattice is $\Ubf^{\oplus 3}(\frac{1}{2})\oplus \Ebf_8(-1)$  and hence  $\LL^G\cong \Ubf^{\oplus 3}\oplus \Ebf_8(-2)$ by Proposition \ref{prop:Nsequence}. This is turn implies that $\Lbf_G$ is a lattice whose discriminant form is that of $\Ebf_8(-2)$. 

Since $\Lbf_G$ has rank $8$ and is negative definite, it follows that $\Lbf_G(\frac{1}{2})$ is unimodular negative definite. It cannot be odd, because then $\Lbf_G\cong \Abf_1(-1)^{\oplus 8}$ and this has not the discriminant form of $\Ebf_8(-2)$. It follows that $\Lbf_G\cong \Ebf_8(-2)$ as asserted.

It then also follows that the embedding $\LL^G\oplus \Lbf_G\subset \LL$ is isomorphic to $(\Ubf^{\oplus 3}\oplus \Ebf_8(-2))\oplus \Ebf_8(-2)\subset
\Ubf^{\oplus 3}\oplus\Ebf_8(-1)$, where the two copies of $\Ebf_8(-2)$ embed in $\Ebf_8(-1)^{\oplus 2}$ as resp.\ the diagonal and the antidiagonal.
It is clear that via such an isomorphism, $G$  simply exchanges the $\Ebf_8(-1)$-summands.
\end{proof}

Since we arrived at this Corollary \ref{cor:nikulininvolution} via a symplectic resolution, we obtain additional geometric information, for 
this also tells us how to recover the $G$-action topologically: start off with an oriented  smooth manifold  $M'$ that is 
homeomorphic to a  K3 surface and find in it a disjoint union $D$ 
of $8$ embedded $2$-spheres with self-intersection $-2$ such that $H^2(D; \Zb)$ meets $H^2(M')$ in  $H^2(M'; \Qb)$ in $\Nbf_2$. This implies that
$M'$ admits a double covering $\tilde M'\to  M'$ ramified along $D$. The preimage $\tilde D$ of $D$ in $\tilde M'$ consists $8$ pairwise disjoint $2$-spheres with self-intersection $-1$. These can be contracted to produce a smooth $4$-manifold homeomorphic to the K3 surface. The covering 
transformation then gives us the $G$-action. 

\begin{remark}\label{rem:}
This comes close to, but does not quite produce, a classification of $\Diff(M)$-conjugacy classes of involutions of Nikulin type.
In the complex-analytic setting the resolved orbit space $M'$ produces a genuine K3 surface and one can show that the involutions thus obtained 
lie in a single conjugacy class in $\Diff(M)$. It is not clear to us whether every involution of the type considered here belongs to that class.
For example, we do not know whether our $M'$ is always \emph{diffeomorphic} (rather than homeomorphic) with a K3 surface.
But even if this were the case, then we would still need to know whether the isotopy classes 
of `Nikulin submanifolds' $D\subset M'$ that define a given primitive embedding of the Nikulin lattice in $H_2(M')$  make up a single orbit under the Torelli group of $M'$.
\end{remark}

\begin{proof}[Proof of Theorem~\ref{theorem:main2}] \label{section:main2:proof} 
Let $G\subset\Mod(M)$ be a subgroup of order $2$.  Ruberman (Theorem 2.1 of \cite{Ru}) and Matumoto \cite{Ma2} proved that if a closed spin $4$-manifold $X$ with $H_1(X)=0$ admits a smooth involution acting trivially on $H_2(X)$, then $X$ has signature $0$.  Since the K3 manifold $M$ has $H_1(M)=0$ and is spin, but has signature $-16$, it follows that the induced representation $G\to \Aut(H_M)\cong \Orth^+(\LL)$ is faithful. We can therefore identify $G$ with its image in $\Orth^+(\LL)$.

The space $\Gr^+_3(\LL_\Rb)$ of positive-definite 3-planes in $\LL_\Rb$ 
is the symmetric space of $\Orth^+(\LL_\Rb)$, whose natural $\Orth^+(\LL_\Rb)$-metric has nonpositive sectional curvature.  Thus any finite $G$ acting on this space has a fixed point; that is, $G$  leaves invariant 
some positive-definite $3$-plane $P\subset \LL_\Rb$.  There are two cases to consider according to whether 
or not the induced $G$-action on $P$ is trivial.

If the $G$ action on $P$ is nontrivial then  $\Lbf_G=0$.  Theorem~\ref{thm:K3examples} then implies that $G$ can be realized  as a group of automorphisms of a complex structure on $M$ endowed with a Ricci-flat K\"ahler metric.  In particular the subgroup $G\subset\Mod(M)$ can be realized as a group of diffeomorphisms.

If the $G$-action on $P$ is trivial then Corollary \ref{cor:nikulininvolution} implies that  a necessary condition for $G$ to lift to $\Diff(M)$ is that
$\Lbf_G$ contain no $(-2)$-vector. But Theorem~\ref{thm:K3examples} shows that this is also sufficient:  then too  
$G$ can then be realized as a group of automorphisms of a complex structure on $M$ endowed with a Ricci-flat K\"ahler metric.
\end{proof}

\subsection{Nielsen realization relative to a Ricci-flat metric}
In this subsection we deduce some geometric consequences of our study of $\Lbf_p$.

\begin{lemma}[{\bf Moduli space of Ricci-flat metrics}]
\label{lemma:ricciflatmoduli}
Let  $i:\Lbf_p\hookrightarrow \LL$ be a primitive embedding. Extend the $C_p$ action to $\LL$ by letting it act trivially on $i(\Lbf_p)^\perp$.  Denote by $\Rc(i)$ the space of 
Ricci-flat metrics $s$ on  $M$ for which $C_p$ lies in the image of the isometry group of $(M,s)$. Then the orbit space of $\Rc(i)$ with respect to the natural action of the Torelli group  of $M$ can  be identified with the complement of a locally-finite union of submanifolds of codimension $3$ in the symmetric space of 
positive $3$-planes in the orthogonal complement of $i$ in $\LL_\Rb$; in particular, it is a connected manifold of dimension $3(2\nu-5)$.
This manifold is the base of a fine moduli space: it supports a universal family of Ricci-flat K3 manifolds.

Conversely, if $s$ is a Ricci-flat metric on $M$, then every cyclic group of isometries of $(M, s)$ of order $p$ 
that pointwise fixes a positive $3$-plane in $\LL_\Rb$ 
arises in this manner for some primitive embedding $i:\Lbf_p\hookrightarrow \LL$.
\end{lemma}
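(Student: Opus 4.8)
The plan is to reduce everything to the differential-geometric Torelli theorem. Recall from Corollary~\ref{cor:isometrylift} that for a Ricci-flat metric $s$ the isometry group of $(M,s)$ is identified, through its action on $\LL$, with the $\Orth^+(\LL)$-stabilizer of the positive $3$-plane $P_s$ of self-dual harmonic forms. Hence, with $C_p\subset\Orth^+(\LL)$ the group attached to the embedding $i$ (acting by $\sigma$ on $i(\Lbf_p)$ and trivially on $i(\Lbf_p)^\perp$), we have $s\in\Rc(i)$ precisely when $C_p$ stabilizes $P_s$. First I would observe that such a $P_s$ must lie in $i(\Lbf_p)^\perp_\Rb$: since $\Lbf_p$ is a purely cyclotomic $\Zb(C_p)$-module it has no nonzero $C_p$-invariant vector, so the nontrivial isotypic part of $\LL_\Rb$ is exactly the negative-definite space $i(\Lbf_p)_\Rb$; as $P_s$ is $C_p$-invariant and positive-definite, its nontrivial isotypic part, being contained in $i(\Lbf_p)_\Rb$, must vanish, so $C_p$ fixes $P_s$ pointwise and $P_s\subset i(\Lbf_p)^\perp_\Rb$.

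Next I would invoke Proposition~\ref{prop:Lo1}: the period map $\Ps$ is $\Tor(M)$-invariant and induces a homeomorphism of $\Teich(M)/\Tor(M)$ onto the open set $U\subset\Gr^+_3(\LL_\Rb)$ of $3$-planes not perpendicular to a $(-2)$-vector. Combined with the previous step this identifies $\Rc(i)/\Tor(M)$ with $\Omega(i):=U\cap\Gr^+_3(i(\Lbf_p)^\perp_\Rb)$. To see that $\Omega(i)$ is the complement of a locally finite family of codimension-$3$ submanifolds, note that for a $(-2)$-vector $v\in\LL$ the wall $\{P\subset i(\Lbf_p)^\perp_\Rb : P\perp v\}$ equals $\{P\perp v'\}$, where $v'$ is the orthogonal projection of $v$ to $i(\Lbf_p)^\perp_\Rb$; since $\Lbf_p$ contains no $(-2)$-vector (Proposition~\ref{prop:Lp}(1)), $v\notin i(\Lbf_p)$ and so $v'\neq 0$, whence this wall is either empty (when $v'\cdot v'>0$) or a totally geodesic sub-Grassmannian of codimension $3$. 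Local finiteness is inherited from that of the $(-2)$-wall arrangement on $\Gr^+_3(\LL_\Rb)$ underlying the openness of $U$. Since $\Gr^+_3(i(\Lbf_p)^\perp_\Rb)$ is a symmetric space of dimension $3(2\nu-5)$ (using $\rank\Lbf_p=\nu(p-1)=24-2\nu$, so that $i(\Lbf_p)^\perp$ has signature $(3,2\nu-5)$) and we delete only codimension-$3$ loci, $\Omega(i)$ is connected of that dimension. The fine moduli and universal family assertion then follows by restricting to $\Omega(i)$ the tautological family of marked Ricci-flat metrics over $\Teich(M)$ (see \cite{Lo}).

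For the converse, start from a cyclic $C'\subset\Isom(M,s)$ of order $p$ pointwise fixing a positive $3$-plane, so that $\Lbf_{C'}:=(\LL^{C'})^\perp\cap\LL$ is negative definite and $\LL^{C'}$ is primitive. The same isotypic argument shows $P_s\subset\LL^{C'}_\Rb$; then $\Lbf_{C'}$ contains no $(-2)$-vector, for any such vector would be perpendicular to $P_s$, contradicting Proposition~\ref{prop:Lo1}(1). Because $C'$ acts trivially on $\LL^{C'}$ and preserves the unimodular lattice $\LL$, it acts trivially on the discriminant form of $\LL^{C'}$ and hence, via the anti-isometry induced by $\LL$, trivially on the discriminant form of $\Lbf_{C'}$. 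Thus $\Lbf_{C'}$ is a negative-definite lattice without $(-2)$-vectors, primitively embedded in $\LL$, carrying an order-$p$ automorphism trivial on its discriminant group; Nikulin's uniqueness theorem (Proposition~\ref{prop:unique}) gives $p\in\{2,3,5,7\}$ and $\Lbf_{C'}\cong\Lbf_p$, while Proposition~\ref{prop:Lp}(3) forces the automorphism to generate $C_p=\langle\sigma\rangle$. Taking $i\colon\Lbf_p\hookrightarrow\LL$ to be this isometry onto $\Lbf_{C'}$ and noting $i(\Lbf_p)^\perp\cap\LL=\LL^{C'}$ (saturation of the primitive sublattice $\LL^{C'}$), the extension of $C_p$ by the identity on $i(\Lbf_p)^\perp$ reproduces $C'$; hence $s\in\Rc(i)$, as required.

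The main obstacle I anticipate lies in the wall analysis of the second paragraph: one must ensure that no $(-2)$-vector yields a wall of codimension less than $3$ (guaranteed by $\Lbf_p$ having no $(-2)$-vectors) and that the arrangement is genuinely locally finite on the sub-Grassmannian, together with the precise formulation of the universal family needed to upgrade the set-theoretic bijection $\Rc(i)/\Tor(M)\cong\Omega(i)$ to an isomorphism of fine moduli spaces. The converse, by contrast, is essentially bookkeeping once Nikulin's uniqueness and Proposition~\ref{prop:Lp}(3) are in hand.
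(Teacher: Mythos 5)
Your proof is correct and follows essentially the same route as the paper: identify $\Rc(i)/\Tor(M)$ via the period map with the positive $3$-planes in $i(\Lbf_p)^\perp_\Rb$ avoiding the $(-2)$-walls, and deduce the converse from Nikulin's uniqueness theorem. You in fact supply a few details the paper leaves implicit (the isotypic argument forcing $P_s\subset i(\Lbf_p)^\perp_\Rb$, the codimension count for walls coming from $(-2)$-vectors not in $i(\Lbf_p)$, and the triviality of the action on the discriminant form needed to invoke Proposition~\ref{prop:unique}), so no gaps remain.
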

\begin{proof}
Since $C_p$ acts trivially on $\Lbf_p^\vee/\Lbf_p$, its action on $\Lbf_p$ extends to $\LL$ by letting it act as the identity on the orthogonal complement of
$i$; this is the extension to which our notation $i_*C_p$ refers.  In this way $i_*C_p$ is a subgroup of $\Orth(\Lbf_p)$.

If $s$ is a Ricci-flat metric on $M$, then in view of the Torelli theorem, the isometry group of $(M,s)$ acts faithfully on $\LL$ and a  necessary and sufficient condition that this group contains $C_p$ is that the positive $3$-plane $P_s\subset \LL_\Rb$ defined by its self-dual harmonic $2$-forms has the property that
its orthogonal complement $P_s^\perp$ (so that is the space of antiselfdual harmonic $2$-forms) does not contain a $(-2)$-vector and contains the image of $i$. In other words, the moduli space in question is the space of positive $3$-planes $P\subset i(\LL)_\Rb^\perp$ for which $P^\perp$ does not contain a $(-2)$-vector. The space of positive $3$-planes $P\subset i(\LL)_\Rb^\perp$ is the symmetric space $D_i$ of the orthogonal group of $i(\LL)_\Rb^\perp$, and the locus of positive $3$-planes $P\subset i(\LL)_\Rb^\perp$ that are also perpendicular to a given $(-2)$-vector $v$ defines a symmetric subspace of codimension $3$. These symmetric subspaces are locally finite in $D_i$ and hence the moduli space is as asserted. It is known that this support a universal family (see Corollary 5.5 of \cite{Lo}).

If $s$ is a Ricci-flat metric on $M$ and $G$ is a subgroup of isometries of  $(M, s)$ of order $p$ which pointwise fixes a positive $3$-plane in $\LL_\Rb$, then
$P_s^\perp$ contains $\Lbf_G$. In particular, $\Lbf_G$ does not contain any $(-2)$-vector and is therefore by Nikulin's uniqueness property \ref{prop:unique} of the above type.
\end{proof}

Building on all of the above material, we can now finish the proof of Theorem~\ref{theorem:classification}, which classifies all prime order groups $G\subset\Diff(M)$ preserving some complex structure.

\begin{proof}[Proof of Theorem \ref{theorem:classification}]
Let us first observe that in the situation of Lemma \ref{lemma:ricciflatmoduli} the embedding $i$ determines an orbit of order $p$ subgroups of $\Diff(M)$ with respect to the action of the Torelli group. This is because we have associated to $i$ a fine moduli space with connected base, whose monodromy representation on $\LL$ is trivial.  

We now let $i$ vary. The orthogonal complement for every primitive embedding of $\Lbf_p$ in $\LL$ is even and indefinite of rank $2\nu-2$ (it has signature $(3, 2\nu-5)$) and its discriminant group of $\Lbf_p$ has $\nu$ generators. Since $\nu<2\nu-2$, it follows from Nikulin's Thm.\ 1.14.1
and Prop.\ 1.14.2 \cite{nikulin1980} that such primitive embeddings  satisfy the weak form of Witt's theorem, by which is meant that 
the sublattices $\Lbf\subset \LL$ isomorphic to $\Lbf_p$ make up a single orbit under $\Orth(\LL)$. Since $\Orth(\LL)=\{\pm 1\}\Orth^+(\LL)$, it then follows that it is also a single orbit under $\Orth^+(\LL)$. If we combine this with the preceding, we find that the cyclic subgroups $G$ of 
$\Diff(M)$ of order $p$  which preserve a Ricci-flat  metric and for which $L_G$ is negative definite make up a single conjugacy class. 

For the proof of Theorem \ref{theorem:classification} it remains to make the passage from invariant Ricci-flat metric metrics to invariant complex structures. As we recalled earlier \cite{HKLR}, the twistor theory shows that a Ricci-flat metric  $s$ on $M$ is a K\"ahler metric for a $2$-sphere of complex structures, and that every complex structure so arises. In fact, for every complex structure $J$ on $M$ and every automorphism $g$ of $(M,J)$ for which  $\Lbf_{\la g\ra}$ is negative definite, there exists a Ricci-flat metric $s$ 
that is a K\"ahler metric relative to $J$ and for which $g$ is an isometry.
\end{proof}

\subsection{Elliptic fibrations}\label{sect:ellfib}
As we saw,  if a cyclic group $G$ of prime order acts faithfully on a K3 surface, then $p\in \{2,3,5,7\}$.  Van Geemen-Sarti (for $p=2$) and  Garbagnati-Sarti (for $p$ odd) show that this can then be realized as a subgroup of the  Mordell-Weil group of an elliptic  K3 surface $X$ with $\nu=24/(p+1)$ 
fibers of Kodaira type  $I_p$ and as many fibers of  type $I_1$.  (We recall here that a $I_m$-fiber is for $m>1$ a `cycle' of $(-2)$-curves, $m$ in number and $I_1$ is a rational curve with a node.) So the sum of the Euler characteristics of the singular fibers  is $\nu(p+1)=24$, which is indeed the  Euler characteristic of $M$.  The $G$-action is then realized as the {\em Mordell-Weil group} - the group of automorphisms that preserve each fiber and act on it as a translation. 
Each nontrivial element of $G$ acts in each $I_p$-fiber  as a nontrivial rotation (but not necessary over the same angle, see below). It also acts nontrivially in each $I_1$-fiber and as observed earlier, the singular points of such  fibers  account for all the $G$-fixed points (see Figure~\ref{figure:fiberaction}).  

In what follows we give a homological description of the elliptic fibration (with and without a section) on which $C_p$ acts. The geometrically most useful one is the case when $C_p$ preserves each fiber, but as we will see for $p\not=7$, there exist also fibrations that permutes the fibers.

\para{Elliptic fibrations with fiberwise $C_p$-action} We here describe the homological model of an elliptic fibration that is fiberwise preserved by the $C_p$-action, as depicted in Figure \ref{figure:fiberaction}.

Let $\Zbf$ be a copy of $\Zb$, but regarded as a rank one lattice with zero quadratic form.  Denote
its generator by $f$.  Define an embedding $i: \Nbf_p\to \Nbf_p\oplus\tfrac{1}{p}\Zbf$ by $x\in \Nbf_p\mapsto x+\tfrac{1}{p}(\varrho\cdot x)f$ and let
\[\hat \Nbf_p:=i(\Nbf_p)+\Zbf \subset  \Nbf_p\oplus\tfrac{1}{p}\Zbf.\] The definition of $\Lbf_p$ shows that $\hat \Nbf_p$
meets the first summand in $\Lbf_p$ and the second summand in $\Zbf$.    For $i=1, \dots , \nu $ and $j=1,\dots , p-1$ let 
\[\tilde D_{i,j}:=i(D_{i,j})=D_{i,j} +\tfrac{1}{p}f\in \hat\Nbf_p\] so that 
$\tilde D_{i,0}:=D_{i,0}+\frac{1}{p}f\in \hat\Nbf_p$.  Also recall that $D_{i,0}=-\sum_{j=1}^{p-1} D_{i,j}$ and $\sum_{j=0}^{p-1} \tilde D_{i,j}=f$.  Extend the $C_p$-action to $\Nbf_p\oplus \tfrac{1}{p}\Zbf$  by letting it act trivially on $\tfrac{1}{p}\Zbf$. Since $\varrho$ is an element of $p\Lbf_p^\vee$, 
and $\sigma-1$ takes $\Lbf_p^\vee$ (and hence $\Nbf_p$) to $\Lbf_p$, it follows that $i(\sigma-1)$ takes its values in $\Zbf$. In other words,
 $\hat \Nbf_p$ is $C_p$-invariant.

Suppose we are given primitive embedding $\hat \Nbf_p\hookrightarrow \LL$. 
If $M$ is endowed with a complex structure $J$ for which the image of  this embedding is the Picard lattice (i.e., the orthogonal 
complement of $H^{2,0}(M, J)$), then the theory of K3 surfaces ensures that after possibly multiplying this embedding with $-1$ 
(in order to make it compatible with the spinor orientation), $f$ is the class of an elliptic fibration.   This elliptic fibration has for each $i=1,\dots,\nu$ a singular fiber of 
type $I_p$ whose irreducible components have as classes the images of $\{\tilde D_{i,j}\}_{j=0}^{p-1}$,  and $\nu$ other singular fibers of type $I_1$. 
The Torelli theorem ensures that $C_p$ uniquely lifts to the automorphism group of $(M,J)$ as 
in Figure (\ref{figure:fiberaction}) so that $\Lbf_{C_p}$ is then just the image of $\Lbf_p$.

\begin{remark}\label{rem:}
If the singularities of the quotient $X/G$ are resolved in a minimal manner (as in \S~\ref{subsect:resolution}) we obtain a K3 surface $X'$ that is also 
elliptically fibered, but one for which the $G$-orbit space of each $I_p$-fiber resp. $I_1$-fibers of $X$ yields an 
$I_1$-fiber resp.\  resolves to an $I_p$-fiber of $X'$. So $X'$ is of the same type as $X$. In each smooth fiber 
$F$ the group $G$ is contained in its translation group $T(F)$ (an elliptic curve for which $F$ is a principal homogeneous space); it is in fact a subgroup of order $p$ of the $p$-torsion group $T(F)[p]$. The $G$-orbit space $F'$ 
of $F$  has translation group $T(F)/G$ and the image of $T(F)[p]$ in $T(F)/G$ is a group that is via the Weil pairing (or equivalently, the symplectic form on $H_1(F)$) naturally identified with $G':=\Hom(G, \Cb^\times)$. 
One can show that $G'$ in fact acts on $X'$ and that repeating this construction with $(X', G')$ returns a copy of $(X,G)$.
\end{remark}

\para{Elliptic fibrations with $C_p$-action permuting the fibers} 
The advantage of the above model is that it helps us  to understand the geometry of the $C_p$-action. But a K3  surface 
$X$ with an elliptic fibration and an automorphism of order $p$ is not necessarily of the above type, 
as we will see below this can happen when $p=2,3,5$ (the algebraic counterpart of this observation is somewhat implicit in \S 5 of 
Garbagnati-Sarti \cite{GS}), but not  for $p=7$. 
We here use that  for such $p$ there exists an embedding $i:\Lbf_p\hookrightarrow \LL$ such that $i(\Lbf_p)^\perp$ contains a copy  of $\Ubf$
(as is shown in \cite{GS}). Let $f$ be one of the isotropic generators of this copy so that $i(\Lbf_p)\oplus \Zb f$ is a primitive sublattice of $\LL$.  
The Torelli theorem guarantees the existence of a complex structure $J$ on $M$ such that $X:=(M,J)$ has $i(\Lbf_p)\oplus \Zb f$ as its Picard 
lattice. This  K3 surface comes with a faithful action of   $C_p$ and $\pm f$ is the class of an elliptic fibration $X\to B\cong\Pb^1$ that will be left 
invariant under the $C_p$-action. We want to understand how $C_p$ acts on this fibration.

Since $X$ has no $(-2)$-curves, the Kodaira classification of singular fibers implies that these must be all irreducible: either nodal or cuspidal. So if a fiber $X_t$ is singular and $C_p$-invariant, then the number of fixed points of $C_p$  it contains is equal to its Euler characteristic: one when $X_t$ is nodal (the fixed point is its singular point) and two when $X_t$ is cuspidal (the singular point and a smooth point). This implies  that $C_p$ cannot preserve each fiber, for then the number of fixed points would at least be equal to the Euler characteristic of $X$, which is  $24$ and this is  $>\nu$. So $C_p$ acts nontrivially on the base $B\cong \Pb^1$ of the pencil.
In terms of an affine coordinate $t$ on $B$ this action will be as multiplication with a primitive $p$th root of unity, so that the fibers 
$X_0$ (over $t=0$) and  $X_\infty$ (over $t=\infty$),  contain the fixed points.  Each of these comes with an automorphism of order $p$ and the number of the fixed points 
must add up to $\nu$.  If $X_0$ (or $X_\infty$) is smooth, then for $p=2$ it can have four fixed points (minus the identity with respect to a fixed point)  and for $p=3$ it can have  three (but only when $X_0$ has $j$-invariant zero). Singular fibers different from $0, \infty$ lie over free $C_p$-orbits in $\PP^1\ssm\{0, \infty\}$. This then leads to the following possibilities only.

For $(p,\nu)=(2,8)$, $X_0$ and $X_\infty$ must be \emph{smooth} and each must contain four fixed points,  
the involution being the standard one described above. So there will be in  general $12$ free $C_2$-orbits of singular fibers.  So we will have in general 24 nodal curves. The 24 points in $\Pb^1\ssm\{0, \infty\}$ will consist of $12$ antipodal pairs.

For $(p,\nu)=(3,6)$, $X_0$ and $X_\infty$ must be smooth and  each  contains
three fixed points (so they have $j$-invariant zero) and there will be in general eight $C_3$-orbits of singular fibers.

For $(p,\nu)=(5,4)$,  $X_0$ and $X_\infty$ must each be cuspidal and contain two 
fixed points and there will be in general four $C_5$-orbits of singular fibers.

This type of reasoning also shows that the case $(p,\nu)=(7,3)$ cannot occur. 
For then the three fixed points are contained in the union of the invariant fibers $X_0$ and $X_\infty$, so that one them (say $X_0$) must be 
cuspidal (the other is then nodal). 
At the cusp, we then choose complex-analytic coordinates $(z_1,z_2)$ so that  
$t=z_1^3+z_2^2$ and the $C_p$-action is linearized. It is then clear that $C_p$ must act on the tangent space with characters 
$(\chi^2, \chi^3)$ with $\chi$ primitive. This however contradicts the fact that it acts through $\SL_2(\Cb)$.  
The impossibility of this case  answers a question raised by Garbagnati-Sarti at the end of their paper \cite{GS}.

\para{The lattice of an elliptic fibration with a section} We only discuss the more interesting  case,  when $C_p$ preserves the fibers.
Elliptic fibrations endowed with a section are obtained by enlarging $\hat \Nbf_p$ to a lattice $\Kbf_p=\hat \Nbf_p+\Zb s$ whose quadratic form  is determined by stipulating  that $s$ is perpendicular to $i(\Nbf_p)$, $s\cdot f=1$ and $s\cdot s=-2$. 
It is then clear that the orthogonal complement of $i(\Nbf_p)$ in $\Kbf_p$ is spanned by $f$ and $s$. The basis $\{(e:=s+f,f)\}$ identifies this
orthogonal complement with $\Ubf$, so that \[\Kbf_p=i(\Nbf_p)\oplus\Ubf\cong \Nbf_p\oplus \Ubf.\]

Let us also determine the orthogonal complement of $\Lbf_p$ in $\Kbf_p$. This clearly contains $f$. Since  for $x\in \Nbf_p$,
\[
0=s\cdot i(x)=s\cdot (x+\tfrac{1}{p}(\varrho\cdot x)f)=s\cdot x+\tfrac{1}{p}\varrho\cdot x=(s+\tfrac{1}{p}\varrho)\cdot x,
\]
we see that $ps+\varrho\perp \Lbf_p$. This also shows that $s\cdot\varrho=-\tfrac{1}{p}\varrho\cdot\varrho=2(p-1)$ and so
\[(ps+\varrho)\cdot (ps+\varrho)=-2p^2+4p(p-1)-2(p-1)p=-2p.\] Since $s\cdot f=1$, it follows that 
$e':=ps+\varrho +f$ is isotropic and that $\Lbf_p^\perp=\Zb e'+\Zb f$ is a copy of $\Ubf(p)$.
It also follows that the  $C_p$-action on $\Lbf_p$ extends to $\Kbf_p$ by letting it fix $e'$ and $f$, for  then 
$(\sigma -1)(s)=(\sigma -1)(-\varrho/p)$ and we know that $\sigma -1$ takes $\varrho/p\in \Lbf_p^\vee$  to $\Lbf_p$.

The algebro-geometric interpretation is that for a primitive  embedding of $\Kbf_p\hookrightarrow \LL$ that is compatible with the spinor orientation there exist complex  structures $J$ on $M$ for which the image of  $\Kbf_p$ is the Picard lattice, $f$ the class of an elliptic fibration, and $s$ the class of a section. The group $C_p$ then lifts uniquely to $(M,J)$ so that every $C_p$-translate of this section is also one and the image 
of $\Lbf_p$ is $\Lbf_{C_p}$.

\bigskip{\noindent
Dept. of Mathematics, University of Chicago\\
E-mail: bensonfarb@gmail.com\\
\\
Yau Mathematical Sciences Center, Tsinghua U.\ (Beijing),
U.\ of Chicago, U.\ Utrecht\\
E-mail: e.j.n.looijenga@uu.nl

\end{document}